\theoremstyle{plain}
\newtheorem{Theorem}{Theorem}[section]
\newtheorem{theorem0}{Thorem}
\newtheorem{Proposition}[Theorem]{Proposition}
\newtheorem{Proposition-Definition}[Theorem]{Proposition-Definition}
\newtheorem{Lemma}[Theorem]{Lemma}
\theoremstyle{definition} 
\newtheorem{Definition}[Theorem]{Definition}
\newtheorem{Remark}[Theorem]{Remark}
\newtheorem{Example}[Theorem]{Example}
\newtheorem{Notation}[Theorem]{Notation}
\def\A{{\mathcal{A}}}
\def\B{{\mathcal{B}}}
\def\E{{\mathcal{E}}}
\def\C{{\mathcal{C}}}
\def\V{{\mathcal{V}}}
\def\H{{\mathfrak{H}}}
\def\G{{\mathfrak{G}}}
\def\Z{{\varmathbb{Z}}}
\def\N{{\varmathbb{N}}}
\def\Q{{\varmathbb{Q}}} 
\def\R{{\varmathbb{R}}} 
\def\a{{\boldsymbol{a}}}
\def\b{{\boldsymbol{b}}}
\def\c{{\boldsymbol{c}}}
\def\e{{\boldsymbol{e}}}
\def\w{{\boldsymbol{w}}}
\def\r{{\boldsymbol{r}}}
\def\uu{{\boldsymbol{u}}}
\def\vv{{\boldsymbol{v}}}
\def\x{{\boldsymbol{x}}}
\def\y{{\boldsymbol{y}}}
\def\z{{\boldsymbol{z}}}
\def\initial{\mathop{\mathrm{in}}\nolimits}
\def\lift{\mathop{lift}\nolimits}
\def\lift{\mathop{\mathrm{lift}}\nolimits}
\def\Lift{\mathop{\mathrm{Lift}_\prec}\nolimits}
\def\max{\mathop{\mathrm{max}}\nolimits}
\def\Ker{\mathop{\mathrm{Ker}}\nolimits}
\def\deg{\mathop{\mathrm{deg}}\nolimits}
\def\lcm{\mathop{\mathrm{lcm}}\nolimits}
\def\Fiber{\mathop{\mathrm{Fiber}}\nolimits}
\def\Prob{\mathop{\mathrm{Prob}}\nolimits}
\def\initial{\mathop{\mathrm{in}}\nolimits}
\providecommand{\abs}[1]{\lvert#1\rvert}
\begin{document}
\title{Gr\"obner bases of contraction ideals}
\author{Takafumi Shibuta
}
\address{Department of Mathematics, Rikkyo University, 
Nishi-Ikebukuro, Tokyo 171-8501, Japan}
\email{shibuta@rikkyo.ac.jp}
\date{}
\baselineskip 15pt
\footskip = 32pt
\begin{abstract}
We investigate Gr\"obner bases of contraction ideals under some monomial homomorphisms. 
As an application of our theorem, we generalize the result of Aoki--Hibi--Ohsugi--Takemura and Hibi-Ohsugi.
Using our results, one can provide many examples of toric ideals that admit square-free or quadratic initial ideals. 
\end{abstract}
\maketitle
\tableofcontents
\section{Introduction}
We denote by $\N=\{0, 1, 2, 3, \dots\}$ the set of non-negative integers. 
For a given positive integer $n$, $[n] = \{1, 2, \dots, n\}$ denotes the set of the first $n$ positive integers. 
For a multi-index $\a={}^t(a_1, \dots, a_r)$ and variables $\x=(x_1, \dots, x_r)$, 
we write $\x^\a={x}_1^{a_1}\cdots {x}_r^{a_r}$. 
We set $\abs{\a}=a_1+\dots+a_r$. 
In this paper, ``quadratic" means ``of degree at most two". 
We say that a monomial ideal $J$ satisfies a property $P$ (e.g. quadratic, square-free, or of degree $m$) 
if the minimal system of monomial generators of $J$ satisfies $P$. 

In recent years, applications of commutative algebras in statics have been successfully developed 
since the pioneering work of Diaconis--Sturmfels \cite{DS}. 
They gave algebraic algorithms for sampling from a finite sample space using Markov chain Monte Carlo methods. 
In some statical models, sample spaces are described as $\A$-fiber space 
$\Fiber_\A(\b)\cap \N^n=\{\a={}^t(a_1, \dots, a_n)\in \N^n \mid \A\cdot \a =\b\}$ of $\b$ 
for some $m\times n$ integer matrix $\A$ and $\b\in \N^n$. 
The {\em toric ideal} of the matrix $\A$ is the binomial prime ideal 
$P_\A=\langle \x^\a-\x^\b\mid \a, \b\in \N^n, \A\cdot\a=\A\cdot\b\rangle$ 
in the polynomial ring $K[\x]=K[x_1, \dots, x_n]$. 
In case where $P_\A$ is a homogeneous ideal, 
Diaconis--Sturmfels shows that using a system of generators of $P_\A$, 
one can construct a connected Markov chain over the finite set $\Fiber_\A(\b)$ for any $\b$. 

We are interested in when $P_\A$ admits a quadratic initial ideal or a square-free initial ideal. 
In this paper, we consider the case where $\A$ is $\B\cdot \C$, the product of two matrices $\B$ and $\C$. 
Defining ideal of Veronese subrings of toric algebras, Segre products of toric ideals, and 
toric fiber products of toric ideals are examples of toric ideals of form $I_{\A}$ with $A=\B\cdot \C$ for some $\B$ and $\C$. 
We will show that in case where $\C$ has a good symmetric structure related with $\B$, 
$P_\A$ admits a quadratic (resp. square-free) initial ideal 
if both of $P_\B$ and $P_\C$ admit quadratic (resp. square-free) initial ideals. 
As an application of our result, we generalize the result of \cite{AHOT} and \cite{HO} about nested configurations. 

Let $R=K[{x}_1, \dots, {x}_r]$ and $S=K[{y}_1, \dots, {y}_s]$ be polynomial rings over a field $K$, 
$I$ an ideal of $S$, and $\phi :R\to S$ a ring homomorphism. 
We call the ideal $\phi^{-1}(I)$ the {\it contraction ideal } of $I$ under $\phi$. 
In this paper, we will investigate when the contraction ideal $\phi^{-1}(I)$ admits a square-free initial ideal, 
or a quadratic initial ideal. 
The one of the most important ring homomorphisms in combinatorics and algebraic statistics are monomial homomorphisms: 
Let $\A=(\a^{(1)}, \dots, \a^{(s)})$ be a $\mu\times s$ integer matrix with column vectors 
$\a^{(i)}={}^t(a^{(i)}_{1}, \dots, a^{(i)}_{\mu})\in \Z^\mu$, and consider the ring homomorphism 
\begin{eqnarray*}
\phi_\A:~ S=K[{y}_1, \dots, {y}_s]&\to& K[{z}_1^{\pm 1}, \dots, {z}_\mu^{\pm 1}]\\
{y}_i &\mapsto& \z^{\a^{(i)}}=\prod_{j=1}^{d}{z}_j^{a^{(i)}_j}. 
\end{eqnarray*}
We call $\phi_\A$ a {\it monomial homomorphism}. 
Using abusive notation, we sometime confound the matrix $\A=(\a^{(1)}, \dots, \a^{(s)})$ with the set $\{\a^{(1)}, \dots, \a^{(s)}\}$ 
if $\a^{(i)}\neq\a^{(j)}$ for all $i\neq j$. 
We denote by $\N\A=\{\sum_{i} n_i\a^{(i)}\mid n_i\in \N\}$ the affine semigroup generated by column vectors of $\A$, 
and by $K[\A]$ the monomial $K$-algebra $K[\z^{\a^{(1)}}, \dots, \z^{\a^{(r)}}]$. 
Note that $P_\A = \Ker{\phi_\A}$. 
We call $\A$ a {\it configuration} if there exists a vector 
$0\neq\lambda=(\lambda_1, \dots, \lambda_\mu)\in \Q^\mu$ such that $\lambda\cdot \a^{(i)}=1$ for all $i$. 
If $\A$ is a configuration, then $P_\A$ is a homogeneous ideal in the usual sense and 
some algebraic properties of $K[\A]\cong S/P_\A$ can be derived from Gr\"obner bases of $P_\A$: 
If $\initial_\prec(P_\A)$ is generated by square-free monomials, then $K[\A]$ is normal. 
For a homogeneous ideal $I\subset S$, if $I$ has a quadratic initial ideal with respect to some term order, 
then $S/I$ is a Koszul algebra, that is, the residue field $K$ has a linear minimal graded free resolution. 
In this paper, we prove that in some cases, if both of $I$ and $\Ker \phi_\A$ admit square-free or quadratic initial ideals, 
then so does the contraction ideal $\phi_\A^{-1}(I)$ (Theorem \ref{main}). 
As a corollary, we obtain the next Theorem. 
\begin{theorem0}[Theorem \ref{main}]\label{thm1}
Let $\V=(\vv_1,\dots, \vv_s)$, $\vv_i\in \Z^d$, be a $d\times s$ integer matrix. 
Let $S=K[y_1,\dots,y_s]$ be a $\Z^d$-graded polynomial ring with $\deg_{\Z^d}(y_i)=\vv_i$, and $\H\subset \Z^d$ a finitely generated subsemigroup. 
Let $\A$ be a system of generators as a semigroup of 
\[
\{\a={}^t(a_1,\dots,a_s)\in \N^s\mid \V\cdot \a \in \H\}. 
\]
Suppose tha $\A$ is a finite set. 
Let $I\subset S$ be a $\Z^d$-graded ideal. 
Then the following hold. 
\begin{enumerate}
\item If both of $I$ and $P_\A$ admit initial ideals of degree at most $m$, 
then so is $\phi_{{\A}}^{-1}(I)$. 
\item If both of $I$ and $P_\A$ admit square-free initial ideals, 
then so is $\phi_{{\A}}^{-1}(I)$. 
\end{enumerate}
\end{theorem0}

Our interest is primarily in case where $I$ is a toric ideal. 
In this case, $\phi_{\A}^{-1}(I)$ is also a toric ideal. 
Let $\A=(\a^{(1)}, \dots, \a^{(r)})$ be an $s\times r$ matrix with $\a^{(i)}\in \N^s$, 
and $\B=(\b^{(1)}, \dots, \b^{(s)})$ a $\mu\times s$ matrix with $\b^{(j)}\in \Z^\mu$. 
Then we obtain monomial homomorphisms 
\[
R\xrightarrow{\phi_\A} S\xrightarrow{\phi_\B}K[{z}_1^{\pm 1}, \dots, {z}_\mu^{\pm 1}]. 
\]
Note that the composition of monomial homomorphism is also monomial homomorphism 
defined by the product of the matrices, $\phi_{\B}\circ \phi_{\A}=\phi_{\B\cdot \A}$, 
and the toric ideal $P_{\B\cdot\A}$ is the contraction ideal $\phi_\A^{-1}(P_\B)$. 
By Theorem \ref{thm1}, the following theorem holds. 
\begin{theorem0}[Theorem \ref{generalized nested}]\label{thm2}
Let $d>0$ and $\lambda_i\in \N$ for $i\in [d]$ be integers, $\Z^d=\bigoplus_{i=1}^d \Z\e_i$ a free $\Z$-module of rank $d$ 
with a basis $\e_1, \dots, \e_d$, and 
$S=K\bigl[y^{(i)}_j\mid i\in [d], ~j\in[\lambda_i]\bigr]$ a $\Z^d$-graded polynomial ring with $\deg y^{(i)}_j=\e_i$ for $i \in [d]$, $j\in[\lambda_i]$. 
Let $\A\subset\N^d= \bigoplus_{i=1}^d\N \e_i$ be a configuration. We set 
\[
\tilde{\A}=\Bigl\{\a=(a^{(i)}_j\mid i\in [d], ~j\in[\lambda_i]\bigr)\in \bigoplus_{i=1}^d\Z^{\lambda_i} \mid \deg_{\Z^d} \y^\a\in \A\Bigr\}, 
\]
$R=K[x_\a\mid \a\in \tilde{\A}]$, and $\phi_{\tilde{\A}}:R\to S$, $x_\a\mapsto \y^\a$. 
Let $\V$ be a $\nu\times \mu$ integer matrix, and $\w_1,\dots,\w_d\in \Z^\nu$ linearly independent vectors. 
For $i\in[d]$, we fix a finite set 
\[
\B_i = \bigl\{\b^{(i)}_j\mid j\in[\lambda _i]\}\subset \Fiber_\V(\w_i)=\{\b \in \Z^\mu\mid \V\cdot\b=\w_i\bigr\}\subset\Z^\mu. 
\]
We set $\B=\B_1\cup\dots\cup\B_d$ which is a configuration of $\Z^\mu$. 
We set 
\[
\A[\B_1,\dots,\B_d]:=\biggl\{\sum_{i\in[d], j\in[\lambda_i]}a^{(i)}_j\b^{(i)}_j\mid \a=(a^{(i)}_j\mid i\in[d], j\in[\lambda_i])\in \tilde{\A}\biggr\}. 
\]
Then the following hold. 
\begin{enumerate}
\item If both of $P_\B$ and $P_\A$ admit initial ideals of degree at most $m$, 
then so is $P_{\A[\B_1,\dots,\B_d]}$. 
\item If both of $P_\B$ and $P_\A$ admit square-free initial ideals, 
then so is $P_{\A[\B_1,\dots,\B_d]}$. 
\end{enumerate}
\end{theorem0}
Note that $\tilde{\A}$ is a nested configuration defined in \cite{AHOT}. 
This theorem is a generalization of the theorem of Aoki--Hibi--Ohsugi--Takemura \cite{AHOT} and Hibi-Ohsugi \cite{HO}, 
and contains the result of Sullivant \cite{Sullivant} (toric fiber products). 
The easiest case is the case where there exists $\mu_1, \dots, \mu_d\in\N$ such that $\mu=\mu_1+\dots+\mu_d$, $\N^\mu=\N^{\mu_1}\times\dots\times\N^{\mu_d}$, 
and $\B_i\subset \{\bold{0}\}\times\dots \times\{\bold{0}\}\times\N^{\mu_i}\times\{\bold{0}\}\times\dots \times\{\bold{0}\}$ for all $i$. 
If this is the case, the Gr\"obner basis of $P_\B$ is the union of Gr\"obner bases of $P_{\B_i}$'s, 
and Theorem \ref{thm2} is follows from Hibi-Ohsugi \cite{HO} Theorem 2.6. 
It often happens that the product $\B\cdot \A$ of two matrices $\B$ and $\A$ has two equal columns. 
If $\A=(\a^{(1)}, \dots, \a^{(r)})$ is a matrix with $\a^{(i)}=\a^{(j)}$, 
and $\prec$ is a term order on $R$ such that ${x}_i\prec {x}_j$. 
Let $\A'=(\a^{(1)}, \dots, \a^{(i-1)}, \a^{(i+1)}, \dots, \a^{(r)})$. 
Then the union of $\{{x}_j-{x}_i\}$ and a Gr\"obner basis of $P_{\A'}\subset K[{x}_1, \dots, {x}_{j-1}, {x}_{j+1}, \dots, {x}_{r}]$ 
with respect to the term order induced by $\prec$ is a Gr\"obner basis of $P_\A$. 
Therefore a Gr\"obner basis of $P_{\A}$ and that of $P_{\A'}$ are essentially equivalent. 
In particular, the maximal degree of the minimal system of monomial generators of $\initial_\prec(P_\A)$ coincides with that of $\initial_\prec(P_{\A'})$, 
and $\initial_\prec(P_\A))$ is generated by square-free monomials if and only if $\initial_\prec(P_{\A'})$ is. 

We will present a small example. 
Assume that there are four ingredients $z_1, z_3, z_3, z_4$, and three manufacturers $B_1, B_2, B_3$. 
Assume that each ingredient $z_i$ is equipped with a property vector $\vv_i=(v_{i1}, v_{i2}, v_{i3})\in \N^3$ as in Table \ref{Ingredient}. 
\begin{table}[!h]
  \begin{center}
		\begin{tabular}{|c||c|c|c|}
			\hline
			Ingredient & Property 1 & Property 2 & Property 3 \\
			\hline\hline
			$z_1$ & 600 & 30 & 20 \\
			$z_2$ & 400 & 30 & 10 \\
			$z_3$ & 700 & 20 & 30 \\
			$z_4$ & 1200 & 40 & 50 \\
			\hline
			\end{tabular}
  \end{center}
\caption{Ingredient}
\label{Ingredient}
\end{table}
For example, $z_i$'s are cooking ingredients and $\vv_i$ is the list of the nutritions of $z_i$, 
or $z_i$'s are financial products and $\vv_i$ is the list of the price, the interest income, and the risk index of $z_i$. 
Each of manufacturers provides products combining $z_1, \dots, z_4$. 
A product is expressed as a monomial $z_1^{b_1}z_2^{b_2}z_3^{b_3}z_4^{b_4}$ where $b_i$ is the number of $z_i$ contained in the product. 
Assume that property vectors are additive, that is, the property vector of $z_1^{b_1}z_2^{b_2}z_3^{b_3}z_4^{b_4}$ is $b_1\vv_1+\dots+b_4\vv_4$. 
Suppose that each manufacturer $B_j$ sells the products with a fixed property vector $\w_j$ as in Table \ref{Products}. 
\begin{table}[!h]
  \begin{center}
		\begin{tabular}{|c||c|c|}
		\hline
		Manufacturer & products & property vector $\w_j$\\
		\hline\hline
		$B_1$ & $\bar{y}_{1}:=z_1^2z_3z_4$, \hspace{2mm} $\bar{y}_{2}:=z_1z_2z_3^3$ & (3100, 120, 120)\\
		\hline
		$B_2$ & $\bar{y}_{3}:=z_1z_3z_4^2$, \hspace{2mm} $\bar{y}_{4}:=z_2z_3^3z_4$ & (3700, 130, 150)\\
		\hline
		$B_3$ & $\bar{y}_{5}:=z_1^2z_4^2$, $\bar{y}_{6}:=z_1z_2z_3^2z_4$, $\bar{y}_{7}:=z_2^2z_3^4$ & (3600, 140, 140)\\
		\hline
		\end{tabular}
	\end{center}
		\caption{Products}
		\label{Products}
\end{table}
Suppose that each customer choose two manufacturers and buys one product from each chosen manufacturer. 
Then there are $16$ patterns of choice as 
$\bar{y}_{1}\bar{y}_{3}, \bar{y}_{1}\bar{y}_{4}, \bar{y}_{2}\bar{y}_{3}, \bar{y}_{2}\bar{y}_{4}$, 
$\bar{y}_{1}\bar{y}_{5}, \bar{y}_{1}\bar{y}_{6}, \bar{y}_1\bar{y}_7, \bar{y}_{2}\bar{y}_{5}, \bar{y}_{2}\bar{y}_{6}, \bar{y}_2\bar{y}_7$, ~~ 
$\bar{y}_{3}\bar{y}_{5}, \bar{y}_{3}\bar{y}_{6}, \bar{y}_3\bar{y}_7, \bar{y}_{4}\bar{y}_{5}, \bar{y}_{4}\bar{y}_{6}, \bar{y}_4\bar{y}_7$, 
and we name them $\bar{x}_1, \dots, \bar{x}_{16}$, respectively. 
Suppose that there are $1000$ customers, and the choices of the customers is 
\[
\c_0={}^t(101, 59, 80, 21, 129, 62, 78, 83, 47, 51, 98, 70, 12, 58, 31, 20)
\]
where the $k$-th component of $\c_0$ is the number of customers whose choice is $\bar{x}_k$. 
We will count the number of $\bar{y}_j$ that are sold, and the number of $z_i$ in the whole of the sold products. 
Let 
\makeatletter
\c@MaxMatrixCols=16
\makeatother
\begin{equation*}
\tilde{\A}=
{\scriptsize
\begin{pmatrix}
1&1&0&0& 1&1&1&0&0&0& 0&0&0&0&0&0\\
0&0&1&1& 0&0&0&1&1&1& 0&0&0&0&0&0\\
1&0&1&0& 0&0&0&0&0&0& 1&1&1&0&0&0\\
0&1&0&1& 0&0&0&0&0&0& 0&0&0&1&1&1\\
0&0&0&0& 1&0&0&1&0&0& 1&0&0&1&0&0\\
0&0&0&0& 0&1&0&0&1&0& 0&1&0&0&1&0\\
0&0&0&0& 0&0&1&0&0&1& 0&0&1&0&0&1
\end{pmatrix}
}, \hspace{2mm}
\B=
{\scriptsize
\begin{pmatrix}
2&1& 1&0& 2&1&0\\
0&1& 0&1& 0&1&2\\
1&3& 1&3& 0&2&4\\
1&0& 2&1& 2&1&0
\end{pmatrix}
}. 
\end{equation*}
Then the $j$-th component of $\tilde{\A}\cdot\c_0={}^t(429, 282, 361, 189, 368, 210, 161)$ is the number of $\bar{y}_j$ that are sold, 
and the $i$-th component of $\B\cdot(\tilde{\A}\cdot\c_0)={}^t(2447, 1003, 3267, 2286)$ is the number of $z_i$ in the whole of the sold products. 
We consider all the possibilities of $1000$ customers choices 
such that the number of $z_i$ in the whole of the sold products is the same as $\c_0$ for all $i=1, 2, 3, 4$. 
This space is expressed as the $(\B\cdot \tilde{\A})$-fiber space of $\b:=(\B\cdot\tilde{\A})\cdot\c_0={}^t(2447, 1003, 3267, 2286)$; 
\[
\Fiber_{\B\cdot \tilde{\A}}(\b)\cap\N^{16}=\{\c\in \N^{16}\mid (\B\cdot\tilde{\A})\cdot\c=\b\}. 
\]
Gr\"obner bases of the toric ideal of 
\begin{equation*}
\B\cdot\tilde{\A}=
{\scriptsize
\begin{pmatrix}
3&2&2&1&4&3&2&3&2&1&3&2&1&2&1&0\\
0&1&1&2&0&1&2&1&2&3&0&1&2&1&2&3\\
2&4&4&6&1&3&5&3&5&7&1&3&5&3&5&7\\
3&2&2&1&3&2&1&2&1&0&4&3&2&3&2&1
\end{pmatrix}
}.
\end{equation*}
is used to analyze this model. 
The toric ideal $P_{\B\cdot\tilde{\A}}$ is complicated, and generated by $33$ binomials 
\begin{eqnarray*}
\hspace{-7mm}&&{\scriptstyle 
x_{16}x_{14}-x_{15}^2,~ x_{13}-x_{15},~ x_{12}-x_{14},~ x_{16}x_{11}-x_{15}x_{14},~ x_{15}x_{11}-x_{14}^2,~ x_{16}x_{9}-x_{15}x_{10},~ 
x_{15}x_{9}-x_{14}x_{10},~ x_{14}x_{9}-x_{11}x_{10}},~ \\
\hspace{-7mm}&&{\scriptstyle 
x_{16}x_{8}-x_{14}x_{10},~ x_{15}x_{8}-x_{11}x_{10},~ x_{14}x_{8}-x_{11}x_{9},~ x_{10}x_{8}-x_{9}^2,~ x_{7}-x_{9},~ x_{6}-x_{8},~ 
x_{16}x_{5}-x_{11}x_{10},~ x_{15}x_{5}-x_{11}x_{9},~ }\\
\hspace{-7mm}&&{\scriptstyle 
x_{14}x_{5}-x_{11}x_{8},~ x_{10}x_{5}-x_{9}x_{8},~ x_{9}x_{5}-x_{8}^2,~ x_{16}x_{3}-x_{15}x_{4},~ x_{15}x_{3}-x_{14}x_{4},~ x_{14}x_{3}-x_{11}x_{4},~ 
x_{10}x_{3}-x_{9}x_{4},~ x_{9}x_{3}-x_{8}x_{4}},~ \\
\hspace{-7mm}&&{\scriptstyle 
x_{8}x_{3}-x_{5}x_{4},~ x_{2}-x_{3},~ x_{16}x_{1}-x_{14}x_{4},~ x_{15}x_{1}-x_{11}x_{4},~ 
x_{14}x_{1}-x_{11}x_{3},~ x_{10}x_{1}-x_{8}x_{4},~ x_{9}x_{1}-x_{5}x_{4},~ x_{8}x_{1}-x_{5}x_{3},~ x_{4}x_{1}-x_{3}^2
}.
\end{eqnarray*}
On the other hand, $\B$ has a simple the toric ideal, and $\tilde{\A}$ has a good combinatorial structure. 
The Gr\"obner basis of $P_\B$ with respect to the lexicographic order $\prec_{\rm lex}$ with $y_7\prec_{\rm lex}\dots\prec_{\rm lex} y_1$ is 
\[
\{y_4y_1-y_3y_2, y_6y_1-y_5y_2, y_7y_1-y_6y_2, y_6y_3-y_5y_4, y_7y_3-y_6y_4, y_7y_5-y_6^2 \}, 
\]
thus $\initial_{\prec_{\rm lex}}(P_\B)$ is generated by square-free quadratic monomials, 
and $P_{\tilde{\A}}$ admits a square-free quadratic initial ideal since $\tilde{\A}$ is a nested configuration (\cite{AHOT} Theorem 3.6). 
By Theorem \ref{thm2}, 
we conclude that $P_{\B\cdot\tilde{\A}}$ also admits a square-free quadratic initial ideal. 
\section{Preliminaries on Gr\"obner bases}
Here, we recall the theory of Gr\"obner bases. 
See \cite{Cox1}, \cite{Cox2} and \cite{Sturmfels} for details. 

Let $R=K[{x}_1, \dots, {x}_r]$ be a polynomial ring over a field $K$. 
A total order $\prec$ on the set of monomials $\{\x^\a\mid \a\in\N^r\}$ is 
a {\em term order} on $R$ if $\x^0=1$ is the unique minimal element, and $\x^{\a}\prec \x^{\b}$ implies 
$\x^{\a +\c }\prec \x^{\b+\c}$ for all $\a$, $\b$, $\c\in \N^r$. 
Let $\a ={}^t( a_1, \ldots, a_r)$ and 
$\b ={}^t( b_1, \ldots, b_r)\in \N^r$. 
\begin{Definition}[lexicographic order] 
The term order $\prec_{lex}$ called a {\em lexicographic order} with $x_r\prec_{lex} \cdots\prec_{lex} x_1$ is defined as follows: 
$\x^{\a }\prec_{lex} \x^{\b }$ if 
$a _j{<}b _j~\mbox{where}~j=\min\{i\mid a _i\neq b _i\}$. 
\end{Definition}
\begin{Definition}[reverse lexicographic order] 
The term order $\prec_{rlex}$ called a 
{\em reverse lexicographic order} with $x_r\prec_{rlex} \cdots\prec_{rlex} x_1$ is defined as follows: 
$\x^{\a }\prec_{rlex} \x^{\b }$ if 
$\abs{\a}<\abs{\b}$ or $\abs{\a}=\abs{\b}$ and 
$a _j{>}b _j~\mbox{where}~j=\min\{i\mid a _i\neq b _i\}$. 
\end{Definition}
\begin{Definition}
Let $\prec$ be a term order on $R$, $f\in R$, and $I$ an ideal of $R$. 
The {\it initial term} of $f$, denoted by $\initial_\prec(f)$, is the highest term of $f$ with respect to $\prec$. 
We call $ \initial_\prec(I)=\langle \initial_\prec(f)\mid f\in I\rangle$ the {\it initial ideal} of $I$ with respect to $\prec$. 
We say that a finite collection of polynomials $G\subset I$ is a {\it Gr\"obner basis} of $I$ 
with respect to $\prec$ if $\langle \initial_\prec(g)\mid g\in G\rangle = \initial_\prec(I)$. 
\end{Definition}
\begin{Remark}\label{monomial basis}
The set of all monomials not in the initial ideal $\initial_\prec(I)$ forms a basis of $R/I$ as a $K$-vector space. 
\end{Remark}
We recall the Buchberger's Criterion. 
We denote by $\overline{f}^G$ a remainder of $f$ on division by $G$. 
If $G$ is a Gr\"obner basis of $I$ with respect to $\prec$,  $\overline{f}^G$ is the unique polynomial $g$ 
such that $f-g\in I$ and any term of $g$ is not in $\initial_\prec(I)$. 
\begin{Definition}
The {\it S-polynomial} of $f$ and $g$ is given by 
\[
S(f, g)= \frac{\lcm(\initial_\prec(f), \initial_\prec(g))}{\initial_\prec(f)}f 
- \frac{\lcm(\initial_\prec(f), \initial_\prec(g))}{\initial_\prec(g)}g. 
\]
\end{Definition}
\begin{Theorem}[Buchberger's Criterion] Let $I \subset K[{x}_1, \dots, {x}_r]$ be an ideal with a system of generators $G$. 
Then $G$ is a Gr\"obner basis for $I$ if and only if $\overline{S(f, g)}^G=0$ for all $f, g\in G$. 
\end{Theorem}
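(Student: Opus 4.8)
The plan is to prove the two implications separately: the forward direction is essentially a restatement of the uniqueness of remainders, while the converse is the substantive part and is handled by the standard ``minimal representation'' argument combined with a cancellation lemma. Throughout I would use without comment that $\prec$ is a well-order on the set of monomials, which legitimizes the minimality choice made below.

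\emph{Easy direction.} Suppose $G$ is a Gr\"obner basis of $I$. For any $f,g\in G$ the S-polynomial $S(f,g)$ lies in $I$, and every term of the remainder $\overline{S(f,g)}^G$ lies outside $\initial_\prec(I)$ by construction of the division algorithm. But $\overline{S(f,g)}^G=S(f,g)-(\text{element of }\langle G\rangle)\in I$, and by Remark \ref{monomial basis} the only element of $I$ all of whose terms avoid $\initial_\prec(I)$ is $0$. Hence $\overline{S(f,g)}^G=0$.

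\emph{Converse.} Assume $\overline{S(f,g)}^G=0$ for all $f,g\in G$, and write $G=\{g_1,\dots,g_t\}$. Since $\langle\initial_\prec(g)\mid g\in G\rangle\subseteq\initial_\prec(I)$ is automatic, it suffices to show that for every $0\neq f\in I$ the monomial $\initial_\prec(f)$ is divisible by some $\initial_\prec(g_i)$. Among all expressions $f=\sum_{i=1}^t h_ig_i$ with $h_i\in R$, associate to each the monomial $m=\max_{1\le i\le t}\initial_\prec(h_ig_i)$, and fix one for which $m$ is $\prec$-minimal. Then $\initial_\prec(f)\preceq m$; and if $\initial_\prec(f)=m$, the leading term of $f$ must coincide with $\initial_\prec(h_ig_i)$ for some $i$ with $\initial_\prec(h_ig_i)=m$, so $\initial_\prec(g_i)\mid\initial_\prec(f)$ and we are done. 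The crux is to exclude $\initial_\prec(f)\prec m$: if this held, the leading terms of the summands $h_ig_i$ with $\initial_\prec(h_ig_i)=m$ would cancel, and writing the leading term of each such $h_i$ as $c_i\x^{\a_i}$, the cancellation lemma (see \cite{Cox1}) rewrites $\sum c_i\x^{\a_i}g_i$, up to scalars, as a combination $\sum_{i<j}\x^{\b_{ij}}S(g_i,g_j)$ in which each monomial $\x^{\b_{ij}}\lcm(\initial_\prec(g_i),\initial_\prec(g_j))$ is strictly $\prec m$. Invoking the hypothesis, the division algorithm gives $S(g_i,g_j)=\sum_k q_{ijk}g_k$ with $\initial_\prec(q_{ijk}g_k)\preceq\initial_\prec(S(g_i,g_j))\prec\lcm(\initial_\prec(g_i),\initial_\prec(g_j))$. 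Substituting back yields a new expression $f=\sum_i h_i'g_i$ with $\max_i\initial_\prec(h_i'g_i)\prec m$, contradicting minimality; hence $\initial_\prec(f)=m$, finishing the proof.

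The step I expect to be the main obstacle is the bookkeeping in that last reduction: stating and proving the cancellation lemma cleanly, and then checking that after the substitution every term of each new coefficient $h_i'$ indeed keeps $\initial_\prec(h_i'g_i)\prec m$. This hinges precisely on the defining property of the division algorithm, namely that no term of $\sum_k q_{ijk}g_k$ exceeds $\initial_\prec(S(g_i,g_j))$; once that is isolated as a lemma, the rest is routine.
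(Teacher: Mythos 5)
The paper does not prove this theorem: it is stated as background in the preliminaries section, with the reader referred to \cite{Cox1}, \cite{Cox2}, \cite{Sturmfels} for proofs. So there is no ``paper's proof'' to compare against. What you wrote is the standard textbook argument (essentially the one in Cox--Little--O'Shea), and it is correct. The easy direction correctly uses that $S(f,g)\in I$ and that, by Remark \ref{monomial basis}, an element of $I$ none of whose terms lies in $\initial_\prec(I)$ must be zero. The converse correctly implements the minimal-representation argument: pick a representation $f=\sum h_ig_i$ minimizing $m=\max_i \initial_\prec(h_ig_i)$, observe that $\initial_\prec(f)\preceq m$, and rule out strict inequality via the cancellation lemma (a homogeneous syzygy among leading terms of equal degree $m$ is a combination of shifted S-polynomial syzygies) together with the hypothesis, which after division furnishes representations $S(g_i,g_j)=\sum_k q_{ijk}g_k$ with $\initial_\prec(q_{ijk}g_k)\preceq\initial_\prec(S(g_i,g_j))\prec\lcm(\initial_\prec(g_i),\initial_\prec(g_j))$; substituting strictly lowers $m$, contradicting minimality. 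One small point of precision worth tightening if you write this out in full: in the case $\initial_\prec(f)=m$, what you actually get is that the monomial $m$ equals $\initial_\prec(h_i)\initial_\prec(g_i)$ for some $i$, hence $\initial_\prec(g_i)$ divides $\initial_\prec(f)$; phrasing it as ``the leading term of $f$ coincides with $\initial_\prec(h_ig_i)$'' elides both the scalar and the fact that several summands could share the top monomial without fully cancelling. Otherwise the plan is sound and complete modulo the cancellation lemma you correctly flag as the main bookkeeping step.
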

We can also consider initial ideals with respect to weight vectors. 
Fix a weight vector $\w=(w_1, \dots, w_r)\in\N^r$. 
We grade the ring $R$ by associating weights $w_i$ to ${x}_i$. 
Then the weight of $\x^\a\in R$ is $\w\cdot \a$. 

\begin{Definition}
Given a polynomial $f\in R$ and a weight vector $\w$, the {\it initial form} $\initial_\w(f)$ is the sum of all monomials of $f$ 
of the highest weight with respect to $\w$. 
We call $\initial_\w(I)=\langle \initial_\w(f)\mid f\in I\rangle$ the {\it initial ideal} of $I$ with respect to $\w$. 
We say that a finite collection of polynomials $G\subset I$ is a {\it pseudo-Gr\"obner basis} of $I$ 
with respect to $\w$ if $\langle \initial_\w(g)\mid g\in G\rangle = \initial_\w(I)$. 
If $G$ is a pseudo-Gr\"obner basis and $\initial_\w(g)$ is a monomial for all $g\in G$, 
we call $G$ a Gr\"obner basis of $I$ with respect to $\w$. 
\end{Definition}
It is easy to prove that any pseudo-Gr\"obner basis of an ideal is a system of generators of the ideal. 

We define a new term order $\prec_\w$ defined by weight vector $\w$ with a term order $\prec$ as a tie-breaker. 
\begin{Definition}
For a weight vector $\w$ and a term order $\prec$, we define a new term order $\prec_\w$ as follows: 
$\x^\a \prec_\w \x^\b $ if $\w\cdot\a <\w\cdot\b $, 
or $\w\cdot\a =\w\cdot\b $ and $\x^\a \prec \x^\b $. 
\end{Definition}
A Gr\"obner basis of $I$ with respect to $\prec_\w$ is a pseudo-Gr\"obner basis of $I$ with respect to $\w$, 
but the converse is not true in general. 
We end this section with the following useful and well-known propositions. 
See \cite{Sturmfels} for the proofs. 
\begin{Proposition}
For any term order $\prec$ and any ideal $I\subset R$, 
there exists a vector $\w\in\N^r$ such that $\initial_\prec(I)= \initial_\w(I)$. 
\end{Proposition}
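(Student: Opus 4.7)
The plan is to take a reduced Gr\"obner basis $G = \{g_1, \ldots, g_s\}$ of $I$ with respect to $\prec$, construct an integer weight vector $\w \in \N^r$ that refines $\prec$ on $G$, and then transfer this agreement to the whole ideal. Writing $\x^{\alpha_j} := \initial_\prec(g_j)$ and letting $\x^{\beta_{j,k}}$ range over the remaining monomials of $g_j$, the task is to produce $\w$ satisfying $w_i > 0$ for every $i$ and $\w \cdot (\alpha_j - \beta_{j,k}) > 0$ for every $j, k$. This is a finite system of strict linear inequalities in $\w$.

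By Gordan's theorem on alternatives, such $\w \in \R^r$ exists unless there are non-negative reals $\lambda_{j,k}$, $\mu_i$, not all zero, with
\[
\sum_{j,k} \lambda_{j,k}(\alpha_j - \beta_{j,k}) + \sum_i \mu_i \e_i = 0.
\]
After clearing denominators to integers, this would translate into the monomial identity
$\prod_{j,k}(\x^{\alpha_j})^{\lambda_{j,k}} \cdot \prod_i x_i^{\mu_i} = \prod_{j,k}(\x^{\beta_{j,k}})^{\lambda_{j,k}}$.
However, the multiplicative compatibility of $\prec$ --- which yields $\x^{na}\prec\x^{nb}$ whenever $\x^a\prec\x^b$ and $n\geq 1$, and which multiplies strict inequalities termwise --- applied to the relations $\x^{\beta_{j,k}}\prec\x^{\alpha_j}$ and $1\prec x_i$ would force the left-hand monomial to be strictly $\succ$ the right-hand one, a contradiction. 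Hence a real $\w$ satisfying all the strict inequalities exists, and scaling yields $\w \in \N^r$ with $w_i \geq 1$.

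By construction, $\initial_\w(g_j)$ is the single monomial $\initial_\prec(g_j)$, so $\langle \initial_\w(g_j) \mid j\in[s]\rangle = \initial_\prec(I) \subseteq \initial_\w(I)$. For the reverse inclusion I induct on $\initial_\prec(f)$ with respect to $\prec$, which is well-founded on monomials. Given $0 \neq f \in I$, the Gr\"obner basis property writes $\initial_\prec(f) = c\x^\eta\initial_\prec(g_{j_0})$, and setting $f' := f - c\x^\eta g_{j_0} \in I$ gives $\initial_\prec(f') \prec \initial_\prec(f)$. Since the unique $\w$-maximal term of $c\x^\eta g_{j_0}$ is $c\,\initial_\prec(f)$, a short case analysis on whether the $\w$-weight of $\initial_\prec(f)$ attains the maximum $\w$-weight in $f$ shows that $\initial_\w(f)$ is one of $\initial_\w(f')$, $c\,\initial_\prec(f)+\initial_\w(f')$, or $c\,\initial_\prec(f)$, each of which lies in $\initial_\prec(I)$ by the inductive hypothesis together with $c\,\initial_\prec(f) = c\x^\eta\initial_\prec(g_{j_0}) \in \langle\initial_\prec(g_j)\rangle$.

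The main obstacle is the linear-algebraic step producing $\w$: one must rule out the dependency relation using only the abstract multiplicative structure of $\prec$, rather than any representation of $\prec$ by matrices or weights. Once $\w$ is in hand, the Gr\"obner-basis reduction argument for arbitrary $f \in I$ is routine.
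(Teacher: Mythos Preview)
Your proof is correct. Note, however, that the paper does not prove this proposition at all: it is stated as a well-known preliminary fact with the remark ``See \cite{Sturmfels} for the proofs.'' Your argument is essentially the standard one found there---choose a Gr\"obner basis, use a theorem of the alternative (Gordan/Stiemke) to produce an integer weight separating each leading term from the trailing terms, then verify that the resulting weight initial ideal coincides with the term-order initial ideal.

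Two small remarks on presentation. First, when you write ``after clearing denominators to integers,'' Gordan's theorem a priori gives only real coefficients; you are implicitly using that the dependency cone $\{y\ge 0: A^T y=0\}$ is rational because $A$ has integer entries, so a nonzero real point forces a nonzero rational point. This is routine but worth one sentence. Second, in your case analysis for the reverse inclusion, the reason the equal-weight case cannot produce total cancellation is exactly that the monomial $\initial_\prec(f)$ does not occur in $f'$ (since $\initial_\prec(f')\prec\initial_\prec(f)$); you use this implicitly and it would be cleaner to say so. With those clarifications the write-up is complete.
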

\begin{Proposition}\label{w}
$\initial_\prec(\initial_\w(I))=\initial_{\prec_\w}(I)$. 
\end{Proposition}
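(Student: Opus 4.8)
The plan is to prove the two propositions together, since they both follow from a single statement: for any term order $\prec$ on $R$ and any ideal $I\subset R$, there is a weight vector $\w\in\N^r$ with $\initial_\prec(I)=\initial_\w(I)$, and moreover $\initial_{\prec_\w}(I)=\initial_\prec(I)$. First I would establish Proposition \ref{w}, namely $\initial_\prec(\initial_\w(I))=\initial_{\prec_\w}(I)$. The containment ``$\subseteq$'' is the heart of the matter: given $f\in I$, I want to show $\initial_\prec(\initial_\w(f))\in\initial_{\prec_\w}(I)$. The key observation is that among all the monomials of $f$ that attain the maximal $\w$-weight, the term order $\prec$ picks out exactly one — and that monomial is precisely $\initial_{\prec_\w}(f)$ by the very definition of $\prec_\w$ (highest $\w$-weight first, $\prec$ as tie-breaker). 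Hence $\initial_\prec(\initial_\w(f))=\initial_{\prec_\w}(f)\in\initial_{\prec_\w}(I)$, and taking these over all $f\in I$ gives one inclusion of ideals.

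For the reverse containment ``$\supseteq$'', take $f\in I$ and consider $\initial_{\prec_\w}(f)$, which is some monomial $\x^\a$ appearing in $f$ with maximal $\w$-weight (and maximal w.r.t. $\prec$ among those). Then $\x^\a$ is a monomial of $\initial_\w(f)$, and in fact $\x^\a=\initial_\prec(\initial_\w(f))$ since $\prec_\w$ breaks weight-ties by $\prec$; thus $\x^\a\in\initial_\prec(\initial_\w(I))$. This shows the two monomial ideals have the same monomial generators, hence are equal. I should be a little careful that $\initial_\w(f)$ genuinely lies in $\initial_\w(I)$ — this is immediate from the definition of $\initial_\w(I)$ as generated by all such initial forms — and that $\initial_\prec$ applied to an element of a monomial-generated ideal... actually $\initial_\w(I)$ need not be a monomial ideal, so I must phrase this as: every element of $\initial_\w(I)$ is a $K$-linear combination $\sum c_j \initial_\w(f_j)$ of initial forms of elements $f_j\in I$, possibly times monomials, and the $\prec$-leading term of such a combination is the $\prec$-leading term of one of the $\initial_\w(f_j)$ (or is cancelled, in which case it is dominated by another such term), so it still lands in the ideal generated by $\{\initial_\prec(\initial_\w(f))\mid f\in I\}$. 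This standard ``leading term of a combination'' argument is the one routine point to get right.

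For the first proposition — existence of $\w$ with $\initial_\prec(I)=\initial_\w(I)$ — the standard argument is: let $G=\{g_1,\dots,g_t\}$ be the reduced Gr\"obner basis of $I$ with respect to $\prec$. For each $g_i$, the leading monomial $\initial_\prec(g_i)$ beats each of the finitely many other monomials of $g_i$; this imposes finitely many strict linear inequalities on $\w$, which by a compactness/linear-programming argument (the relevant cone is nonempty because $\prec$ itself can be approximated by weight vectors on any finite monomial set) admits a common solution $\w\in\N^r$. For that $\w$ we get $\initial_\w(g_i)=\initial_\prec(g_i)$, so $\initial_\prec(I)=\langle\initial_\prec(g_i)\rangle\subseteq\initial_\w(I)$. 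The reverse inclusion follows because $\initial_\w(I)$ cannot be strictly larger: both ideals have the same Hilbert function as $I$ (using Remark \ref{monomial basis}, or rather its weighted analogue, that $R/I$, $R/\initial_\w(I)$, $R/\initial_\prec(I)$ all have the same dimension in each degree), and a strict inclusion of ideals with equal Hilbert functions is impossible. The main obstacle — really the only subtle point — is justifying that the finitely many inequalities cutting out the desired $\w$ are simultaneously satisfiable by an integer vector; I would handle this by the classical observation that on any finite set of monomials the order $\prec$ agrees with the order induced by a suitable rational (hence, after clearing denominators, integer) weight vector, applied to the finite set of monomials occurring in the reduced Gr\"obner basis. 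I will simply cite \cite{Sturmfels} for this, as the excerpt does.
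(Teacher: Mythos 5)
The paper does not actually prove Proposition \ref{w}; both this and the preceding existence statement are simply cited to \cite{Sturmfels}, so there is no ``paper's proof'' to compare against. Judged on its own terms, your $\supseteq$ direction is fine and is indeed the easy half: for each $f\in I$ one has $\initial_\prec(\initial_\w(f))=\initial_{\prec_\w}(f)$ by the definition of $\prec_\w$, and $\initial_\w(f)\in\initial_\w(I)$, so $\initial_{\prec_\w}(I)\subseteq\initial_\prec(\initial_\w(I))$.

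The gap is in the $\subseteq$ direction, precisely at the point you flag as ``the one routine point to get right.'' The ideal $\initial_\prec(\initial_\w(I))$ is generated by $\initial_\prec(g)$ over \emph{all} $g\in\initial_\w(I)$, not just over $g=\initial_\w(f)$ with $f\in I$, and for a general $g=\sum_j c_j u_j\initial_\w(f_j)$ your sketch says the $\prec$-leading term ``is cancelled, in which case it is dominated by another such term, so it still lands in the ideal.'' That last inference is exactly what needs to be proved, not a consequence of cancellation: when the top $\prec$-terms cancel, the new leading monomial of $g$ is $\prec$-smaller than the $\initial_\prec(\initial_\w(f_j))$, and being $\prec$-smaller says nothing about divisibility, hence nothing about ideal membership. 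As written the argument is circular --- the claim that it ``still lands in the ideal'' is equivalent to $\{\initial_\w(f)\mid f\in I\}$ being a $\prec$-Gr\"obner basis of $\initial_\w(I)$, i.e.\ to the proposition itself. What repairs it is the closure lemma your sketch is missing: $\initial_\w(I)$ is a $\w$-graded ideal, and every nonzero $\w$-homogeneous $g\in\initial_\w(I)$ is itself of the form $\initial_\w(f)$ for a single $f\in I$ (in $g=\sum_j p_j\initial_\w(f_j)$ replace each $p_j$ by its $\w$-graded piece of the right degree and set $f=\sum_j p_j f_j$; the top $\w$-graded piece of $f$ is $g$). With that in hand, decompose an arbitrary $h\in\initial_\w(I)$ into its $\w$-graded pieces $h_i=\initial_\w(f_i)$, observe that $\initial_\prec(h)=\initial_\prec(h_i)$ for whichever piece contains the $\prec$-maximal monomial of $h$, and conclude $\initial_\prec(h)=\initial_{\prec_\w}(f_i)\in\initial_{\prec_\w}(I)$. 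Without that lemma the cancellation case is not handled.
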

\section{Gr\"obner bases of contraction ideals}
Rings appearing in this paper may equip two or more graded ring structures. 
To avoid the confusion, for a ring with a graded ring structure given by an object $*$, 
(for example, a weight vector $\w$, an abelian group $\Z^d$, and so on) 
we say that elements or ideals are $*$-homogeneous or $*$-graded if they are homogeneous 
with respect to the graded ring structure given by $*$. 
For a $*$-homogeneous polynomial $f$, 
we denote by $\deg_*(f)$ the degree of $f$ with respect to $*$. 

Let $R=K[{x}_1, \dots, {x}_r]$ and $S=K[{y}_1, \dots, {y}_s]$ be polynomial rings over $K$, 
and $I$ an ideal of $S$. 
Let $\A=\{\a^{(1)}, \dots, \a^{(r)}\}\subset \N^s$, 
and $\phi_\A:R\to S$ $({x}_j\mapsto \y^{\a^{(j)}})$ be a monomial homomorphism. 
Fix a term order $\prec$ on $R$. 
We investigate when the contraction of an ideal $\phi_\A^{-1}(I)$ has square-free or quadratic initial ideals. 
It is expected that if both of $I$ and $P_\A=\Ker \phi_\A$ have such properties, then so does $\phi_\A^{-1}(I)$. 
We will prove this holds true under some assumptions. 
First, we prove this in case where $I$ is a monomial ideal, 
then reduce the general cases to monomial ideal cases. 
\subsection{In case of monomial ideals}
In this subsection, we consider contractions of monomial ideals. 
\begin{Definition}
For a monomial ideal $J$, 
we denote by $\delta (J)$ the maximum of the degrees of a system of minimal generators of $J$. 
\end{Definition}
\begin{Definition}
Let $I\subset S$ be a monomial ideal. 
Let $L^{(\A)}_\prec(I)$ be a monomial ideal generated by all monomials in $\phi_\A^{-1}(I)\backslash \initial_\prec(P_\A)$. 
We denote by $M^{(\A)}_\prec(I)$ the minimal system of monomial generators $L^{(\A)}_\prec(I)$ 
\end{Definition}
Let $I_1, I_2\subset S$ be monomial ideals. 
Then 
\[
L^{(\A)}_\prec(I_1+I_2) =L^{(\A)}_\prec(I_1) +L^{(\A)}_\prec(I_2) 
\]
as $\phi_\A(u)\in I_1+I_2$ if and only if $\phi_\A(u)\in I_1$ or $\phi_\A(u)\in I_2$ for a monomial $u$. 
In particular, if $I$ is generated by monomials $\y^{\b_1},\dots,\y^{\b_n}$, 
then 
\[
L^{(\A)}_\prec(I)=L^{(\A)}_\prec(\y^{\b_1})+\dots+L^{(\A)}_\prec(\y^{\b_n}). 
\]
\begin{Lemma}\label{pull-back of monomial ideals}
Let $I\subset S$ be a monomial ideal. 
Then the following hold: 
\begin{enumerate}
\item $\delta(L^{(\A)}_\prec(I))\le\delta (I)$. 
\item If $I$ is generated by square-free monomials, then $L^{(\A)}_\prec(I)$ is generated by square-free monomials. 
\end{enumerate}
\end{Lemma}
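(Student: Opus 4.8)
The plan is to analyze a single minimal generator $u = \x^{\boldsymbol{c}}$ of $L^{(\A)}_\prec(I)$ and bound its degree, exploiting that $L^{(\A)}_\prec(I) = \sum_k L^{(\A)}_\prec(\y^{\boldsymbol{b}_k})$ reduces both claims to the case $I = \langle \y^{\boldsymbol{b}} \rangle$ a principal monomial ideal. So fix a monomial $\y^{\boldsymbol{b}}$ with $|\boldsymbol{b}| \le \delta(I)$ (and squarefree for part (2)), and let $u$ be a monomial in $\phi_\A^{-1}(\langle \y^{\boldsymbol{b}} \rangle) \setminus \initial_\prec(P_\A)$. First I would spell out what membership means: $\phi_\A(u) = \y^{\boldsymbol{b}} \cdot \y^{\boldsymbol{d}}$ for some $\boldsymbol{d} \in \N^s$, i.e. $\phi_\A(u)$ is divisible by $\y^{\boldsymbol{b}}$. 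The key observation is that $u \notin \initial_\prec(P_\A)$ means $u$ is a standard monomial, so by Remark \ref{monomial basis} it is the unique $\prec$-smallest monomial in its fiber under $\phi_\A$ — equivalently, $u = \overline{v}^{G}$ for the Gröbner basis $G$ of $P_\A$ whenever $\phi_\A(v) = \phi_\A(u)$.

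The heart of the argument is a "no waste'' claim: if $u$ is a standard monomial whose image is divisible by $\y^{\boldsymbol{b}}$, then $u$ has a sub-monomial $u'$ dividing $u$ with $\deg(u') \le |\boldsymbol{b}|$ (resp. $\deg(u') \le \deg(\y^{\boldsymbol{b}})$) such that $\phi_\A(u')$ is still divisible by $\y^{\boldsymbol{b}}$; then $u'$ too is standard (a divisor of a standard monomial is standard), so $u' \in L^{(\A)}_\prec(\y^{\boldsymbol{b}})$, and minimality of $u$ forces $u = u'$, giving $\deg(u) \le |\boldsymbol{b}| \le \delta(I)$. To produce $u'$: write $u = x_{j_1} \cdots x_{j_t}$ with repetition, so $\phi_\A(u) = \y^{\a^{(j_1)} + \cdots + \a^{(j_t)}}$, and the exponent vector is $\sum_{\ell} \a^{(j_\ell)} \ge \boldsymbol{b}$ coordinatewise. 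I would pick a minimal sub-multiset $L \subseteq \{j_1,\dots,j_t\}$ with $\sum_{\ell \in L}\a^{(j_\ell)} \ge \boldsymbol{b}$ and set $u' = \prod_{\ell \in L} x_{j_\ell}$. For part (1), $|L| \le |\boldsymbol{b}|$: each element of $L$ is needed to cover some coordinate, and since each $\a^{(j)} \in \N^s$ has a positive entry, a set of size exceeding $|\boldsymbol{b}| = \sum_i b_i$ could be pruned — more carefully, greedily add indices that strictly increase $\min_i(\text{covered}_i / b_i)$-type progress, so at most $\sum_i b_i$ indices suffice since each index, when its removal breaks coverage, is "responsible'' for at least one unit in some coordinate $i$ with $b_i \ge 1$. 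For part (2), $\boldsymbol{b}$ is squarefree; here I would additionally arrange that $u'$ is squarefree by never selecting the same variable twice: if $x_j$ appears twice in a covering multiset, one copy contributes $\a^{(j)}$ which is already contributed by the other copy, and since $\boldsymbol{b}$ has entries in $\{0,1\}$, one copy of $x_j$ already covers every coordinate the second copy would, so the second is redundant and can be dropped. This yields $u'$ squarefree with $\phi_\A(u') \mid$-divisible... wait, divisible by $\y^{\boldsymbol{b}}$, hence $u' \in L^{(\A)}_\prec(\y^{\boldsymbol{b}})$, and minimality gives $u = u'$ squarefree.

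The step I expect to be the main obstacle is the degree bound $|L| \le |\boldsymbol{b}|$ in part (1): the naive greedy/pruning argument needs care because covering a single coordinate $i$ with $b_i = 3$, say, might require three separate column vectors each contributing $1$ to coordinate $i$, so one must track that the total "coverage demand'' is exactly $\sum_i b_i$ and that each retained index discharges at least one unit of demand not discharged by the others. I would formalize this by choosing $L$ to be minimal with respect to inclusion among sub-multisets satisfying $\sum_{\ell\in L}\a^{(j_\ell)} \ge \boldsymbol{b}$, then observing that for each $\ell_0 \in L$ there is a coordinate $i(\ell_0)$ with $\sum_{\ell \in L \setminus \{\ell_0\}} a^{(j_\ell)}_{i(\ell_0)} < b_{i(\ell_0)}$; the map $\ell_0 \mapsto i(\ell_0)$ need not be injective, but the fibers over each $i$ have size at most $b_i$ because removing any one element of such a fiber must already drop that coordinate's sum below $b_i$, and the sum over $L$ of $a^{(j_\ell)}_i \ge |\{\ell \in L : i(\ell) = i\}|$... this pigeonhole on $\sum_i b_i$ closes the bound. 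The squarefree case (2) is comparatively soft once the redundancy-of-repeated-variables observation is in hand, since there $\delta$ plays no role and one only needs a squarefree covering subset, which always exists when $\boldsymbol{b}$ is squarefree.
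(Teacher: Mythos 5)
Your proposal is correct and shares with the paper the same reduction (to a principal monomial ideal $I=\langle\y^{\b}\rangle$, using that a divisor of a standard monomial is standard), but it organizes the degree bound in part (1) quite differently. The paper proves (1) by induction on $\delta=\deg(v)$: for a variable $x_1$ dividing $u$, set $\tilde v=\gcd(v,\phi_\A(x_1))$; if $\tilde v=1$ drop $x_1$; otherwise apply the inductive hypothesis with $v$ replaced by $v/\tilde v$ (of degree $\le\delta-1$) and $u$ by $u/x_1$, and multiply the recursively obtained short divisor back by $x_1$. The counting is absorbed into the induction. You instead take a minimal covering sub-multiset $L$ and argue $|L|\le|\b|$ by a pigeonhole on the fibers of $\ell_0\mapsto i(\ell_0)$. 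That strategy works, but the two facts you cite to bound a fiber $F_i=\{\ell\in L:i(\ell)=i\}$ by $b_i$ --- that removing any $\ell_0\in F_i$ drops the coordinate sum $M:=\sum_{\ell\in L}a^{(j_\ell)}_i$ below $b_i$, and that $M\ge|F_i|$ --- do not by themselves close the bound once $M>b_i$. The patch is to use the sharper consequence of minimality, $a^{(j_{\ell_0})}_i\ge M-b_i+1$ for every $\ell_0\in F_i$, then sum over $F_i$ and bound that sum by $M$, yielding $|F_i|\le M/(M-b_i+1)\le b_i$, where the last inequality uses $M\ge b_i\ge 1$. With that repair both approaches are valid: the paper's induction is slicker and avoids explicit counting, while yours is more transparent about exactly where the bound $|\b|$ comes from. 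For part (2) the two proofs are essentially identical: when $\b$ is square-free, choose one variable of $u$ per covered coordinate $y_k$ to get a square-free monomial in $\phi_\A^{-1}(I)$ dividing $u$.
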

\begin{proof}
It is enough to prove in case where $I$ is generated by one monomial $v$.  
Let ${u} \in \phi_\A^{-1}(I)\backslash \initial_\prec(P_\A)$ be a monomial. 

$(1)$ 
Let $\delta:=\delta (I)=\deg(v)$ and $m=\deg(u)$. 
Assume that $m>\delta$. 
It is enough to show that there exists a monomial ${u'}\in \phi_\A^{-1}(I)$ 
of degree strictly less than $m$ such that ${u'}$ divides ${u} $. 
We prove this by induction on $\delta$. 
It is trivial in case where $\delta=0$. Assume that $\delta\ge 1$. 
We may assume, without loss of generality, that ${x}_1$ divides $u$. 
Let $\tilde{v}=\gcd({v} , \phi_\A({x}_1))$. 
If $\tilde{v} =1$, we can take ${u} /{x}_1$ as ${u'}$. 
If $\tilde{v} \neq 1$, then ${v} /\tilde{v} $ is a monomial of degree at most $\delta-1$, 
and ${u} /{x}_1\in \phi_\A^{-1}(\langle {v} /\tilde{v} \rangle)$. 
By the hypothesis of induction, 
there exists a monomial ${u''}\in\phi_\A^{-1}({v} /\tilde{v})$ such that ${u''}$ divides ${u} /{x}_1$ and $\deg({u''})<m-1$. 
Then ${u'}={x}_1\cdot {u''}$ is a monomial with desired conditions. 

$(2)$ We may assume, without loss of generality, that $v=\prod_{j=1}^t{y}_j$ for some $t\le s$. 
Let $\x^{\a}=\prod_{i=1}^{r} {x}_i^{a_i}\in \phi_\A^{-1}(I)$ be a monomial. 
It is enough to show that there exists a square-free monomial in $\phi_\A^{-1}(I)$ that divides $\x^\a$. 
For $1\le k \le t$, there exists $1\le i(k)\le r$ such that $a_{i(k)}\neq 0$ and ${y}_k$ divides $\phi_\A({x}_{i(k)})$. 
Let $\Lambda =\{i(1), \dots, i(t)\}$. Then $\prod_{i\in\Lambda}{x}_i$ is a square-free monomial in $\phi_\A^{-1}(I)$ which divides $\x^\a$. 
\end{proof}
\begin{Proposition}\label{monomial cases}
Let $I\subset S$ be a monomial ideals. 
Let $G_\A$ be a Gr\"obner basis of $P_\A$ with respect to $\prec$. 
Then the following hold: 
\begin{enumerate}
\item $G_\A\cup M^{(\A)}_\prec(I)$ is a Gr\"obner basis of $\phi_\A^{-1}(I)$ with respect to $\prec$. 
\item $\delta(\initial_\prec(\phi_\A^{-1}(I)))\le \max\{\delta(I), \delta(\initial_\prec(P_\A))\}$
\item If $I$ and $\initial_\prec(P_\A)$ are generated by square-free monomials, 
then $\initial_\prec(\phi_\A^{-1}(I))$ is generated by square-free monomials. 
\end{enumerate}
\end{Proposition}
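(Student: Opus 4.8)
The plan is to prove (1) directly and then read off (2) and (3) from (1) combined with Lemma \ref{pull-back of monomial ideals}. Since every element of $M^{(\A)}_\prec(I)$ is a monomial, the ideal $\langle \initial_\prec(g)\mid g\in G_\A\cup M^{(\A)}_\prec(I)\rangle$ is precisely $\initial_\prec(P_\A)+L^{(\A)}_\prec(I)$, so (1) is equivalent to the identity $\initial_\prec(\phi_\A^{-1}(I))=\initial_\prec(P_\A)+L^{(\A)}_\prec(I)$. The inclusion ``$\supseteq$'' is immediate: $P_\A\subseteq\phi_\A^{-1}(I)$ gives $\initial_\prec(P_\A)\subseteq\initial_\prec(\phi_\A^{-1}(I))$, and each generator of $L^{(\A)}_\prec(I)$ is a monomial lying in $\phi_\A^{-1}(I)$, hence equals its own initial term.

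For ``$\subseteq$'' I would take $f\in\phi_\A^{-1}(I)$ and show $\initial_\prec(f)\in\initial_\prec(P_\A)+L^{(\A)}_\prec(I)$. If $\initial_\prec(f)\in\initial_\prec(P_\A)$ there is nothing to do, so assume otherwise. Then replacing $f$ by the remainder $\overline{f}^{G_\A}$ does not change $\initial_\prec(f)$ (its leading term is divisible by no $\initial_\prec(g)$, $g\in G_\A$, so it survives division untouched while all other terms stay strictly below it), and keeps $f$ inside $\phi_\A^{-1}(I)$ because $f-\overline{f}^{G_\A}\in P_\A$. Hence I may assume every term of $f$ is a standard monomial, i.e.\ lies outside $\initial_\prec(P_\A)$. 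The key point is then that $\phi_\A$ produces \emph{no cancellation} on such an $f$: by Remark \ref{monomial basis} the standard monomials form a $K$-basis of $R/P_\A$, and $\phi_\A$ induces an isomorphism $R/P_\A\xrightarrow{\sim}\phi_\A(R)\subseteq S$, so distinct standard monomials $\x^\a$ map to pairwise distinct monomials $\phi_\A(\x^\a)$ of $S$ (they form a basis of $\phi_\A(R)$, hence are linearly independent). Thus writing $f=\sum_\a c_\a\x^\a$ with all $\x^\a$ standard and $c_\a\neq 0$, no two terms of $\phi_\A(f)=\sum_\a c_\a\phi_\A(\x^\a)$ combine, so every $\phi_\A(\x^\a)$ is a monomial occurring in $\phi_\A(f)\in I$; as $I$ is a monomial ideal, $\phi_\A(\x^\a)\in I$ for each such $\a$. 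Taking $\a_0$ with $\x^{\a_0}=\initial_\prec(f)$ gives $\x^{\a_0}\in\phi_\A^{-1}(I)$, while $\x^{\a_0}\notin\initial_\prec(P_\A)$; therefore $\initial_\prec(f)=\x^{\a_0}\in L^{(\A)}_\prec(I)$, as wanted. This establishes (1).

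For (2) and (3): by (1) the ideal $\initial_\prec(\phi_\A^{-1}(I))$ equals the sum $\initial_\prec(P_\A)+L^{(\A)}_\prec(I)$ of two monomial ideals, and for monomial ideals $J_1,J_2$ the minimal monomial generating set of $J_1+J_2$ is a subset of the union of the minimal monomial generating sets of $J_1$ and $J_2$ — so $\delta(J_1+J_2)\le\max\{\delta(J_1),\delta(J_2)\}$, and $J_1+J_2$ is generated by square-free monomials whenever $J_1$ and $J_2$ are. Feeding in $\delta(L^{(\A)}_\prec(I))\le\delta(I)$ from Lemma \ref{pull-back of monomial ideals}(1) gives (2), and feeding in Lemma \ref{pull-back of monomial ideals}(2) gives (3).

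I expect the only real content to be the no-cancellation step in (1): after reducing modulo the Gröbner basis $G_\A$, the surviving terms form a subset of a $K$-basis of $\phi_\A(R)$ and hence have pairwise distinct images in $S$, which is exactly what lets the monomial-ideal condition on $I$ pull back term by term. Everything else — reduction to standard-monomial form, the behaviour of initial terms under division by a Gröbner basis, and the elementary facts about sums of monomial ideals — is routine.
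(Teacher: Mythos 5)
Your proof is correct and follows essentially the same route as the paper's: reduce $f\in\phi_\A^{-1}(I)$ modulo $G_\A$ to a standard-monomial form, observe that $\phi_\A$ separates standard monomials (so there is no cancellation), and conclude term-by-term that the surviving monomials lie in $L^{(\A)}_\prec(I)$; parts (2) and (3) then follow from Lemma \ref{pull-back of monomial ideals}. The only cosmetic difference is that you verify the Gr\"obner basis property by computing $\initial_\prec(\phi_\A^{-1}(I))$ directly, whereas the paper verifies that every element of $\phi_\A^{-1}(I)$ has zero remainder on division by $G_\A\cup M^{(\A)}_\prec(I)$; these are equivalent criteria and the substance is identical.
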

\begin{proof}
It is clear that $G_\A\cup M^{(\A)}_\prec(I)\subset \phi_\A^{-1}(I)$. 
Let $f\in \phi_\A^{-1}(I)$, and $g$ the remainder of $f$ on division by $G_\A$. 
Then any term of $g$ is not in $\initial_\prec(P_\A)$. 
Hence different monomials appearing in $g$ map to different monomials under $\phi_\A$. 
Since $I$ is a monomial ideal, it follows that all terms of $g$ are in $L^{(\A)}_\prec(I)$. 
Thus the remainder of $g$ on division by $M^{(\A)}_\prec(I)$ is zero. 
Therefore a remainder of $f$ on division by $G\cup M^{(\A)}_\prec(I)$ is zero. 
This implies (1). 

We conclude (2) and (3) immediately from (1) and the Lemma \ref{pull-back of monomial ideals}. 
\end{proof}
\subsection{Reduction to case of monomial ideals}
Let $I$ be an ideal of $S$. 
We fix a weight vector $\w=(w_1, \dots, w_s)\in \N^s$ on $S=K[{y}_1, \dots, {y}_s]$ such that $\initial_\w(I)$ 
is a monomial ideal. 
We take 
\[
\phi_\A^{*}\w:=\w\cdot \A=(\deg_\w\phi_\A({x}_1), \dots, \deg_\w\phi_\A({x}_r)) 
\]
as a weight vector on $R=K[{x}_1, \dots, {x}_r]$ so that $\phi_\A$ preserve weight. 
Then $R$ and $S$ are $\N$-graded rings; $R=\bigoplus_{i\in \N}R_i$ and $S=\bigoplus_{i\in \N}S_i$ where $R_i$ and $S_i$ 
are the $K$-vector space spanned by all monomials of weight $i$ with respect to $\phi_\A^{*}\w$ and $\w$, respectively. 
Then $\phi_\A$ is a homogeneous homomorphism of graded rings of degree $0$, that is, 
$\phi_\A(R_i)\subset S_i$. 
Hence $P_\A$ is a $\phi_\A^{*}\w$-homogeneous ideal. 
It is easy to show that $\initial_{\phi_\A^{*}\w}(\phi_\A^{-1}(I))\subset \phi_\A^{-1}(\initial_\w(I))$ 
(see Lemma \ref{surjection} (1)). 
This inclusion is not always an equality. 
\begin{Example}
Let $R=K[{x}_1, {x}_2]$ and $S=K[{y}_1, {y}_2]$ be polynomial rings, 
$\w=(2, 1)$ a weight vector on $S$, 
and 
$\A=\left(
	\begin{array}{cc}
	1 & 1 \\
	0 & 1 \\
	\end{array}
\right)$.  
Then $\phi_\A({x}_1)={y}_1$, $\phi_\A({x}_2)={y}_1{y}_2$, and $\phi_\A^{*}\w=(2, 3)$. 
Let $I$ be an ideal generated by $f={y}_1+{y}_2\in S$. 
Then 
\[
\phi_\A^{-1}(I)=\langle {x}_1-{y}_1, {x}_2-{y}_1{y}_2, f\rangle\cap R=\langle {x}_1^2+{x}_2\rangle 
\]
and thus $\initial_{\phi_\A^{*}\w}(\phi_\A^{-1}(I))=\langle {x}_1^2\rangle$. 
On the other hand, $\initial_\w(f)={y}_1$ and thus 
\[
\phi_\A^{-1}(\initial_\w(I))=\langle {x}_1, {x}_2\rangle. 
\]
Therefore $\initial_{\phi_\A^{*}\w}(\phi_\A^{-1}(I))\neq \phi_\A^{-1}(\initial_\w(I))$. 
\end{Example}
In the case where the equality 
$\initial_{\phi_\A^{*}\w}(\phi_\A^{-1}(I))= \phi_\A^{-1}(\initial_\w(I))$
holds, we obtain the next theorem. 
\begin{Theorem}\label{if=}
Let the notation be as above. 
Suppose, in addition, that the equality 
\[
\initial_{\phi_\A^{*}\w}(\phi_\A^{-1}(I))= \phi_\A^{-1}(\initial_\w(I)) 
\]
holds. 
Then the following hold: 
\begin{enumerate}
\item 
$\initial_{\prec_{\phi_\A^{*}\w}}(\phi_\A^{-1}(I))=\initial_\prec(P_\A)+L^{(\A)}_\prec(\initial_\w(I))$. 
\item 
$\delta(\initial_{\prec_{\phi_\A^{*}\w}}(\phi_\A^{-1}(I)))\le \max\{\delta(\initial_\w(I)), \delta(\initial_\prec(P_\A))\}$. 
\item If both of $\initial_\w(I)$ and $\initial_\prec(P_\A)$ are generated by square-free monomials, 
then $\initial_{\prec_{\phi_\A^{*}\w}}(\phi_\A^{-1}(I))$ is generated by square-free monomials. 
\end{enumerate}
\end{Theorem}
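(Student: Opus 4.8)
The plan is to deduce Theorem \ref{if=} by combining Proposition \ref{monomial cases} (applied to the monomial ideal $\initial_\w(I)$) with the two propositions at the end of Section 2 relating weight orders and term orders. The key observation is that under the hypothesis $\initial_{\phi_\A^{*}\w}(\phi_\A^{-1}(I))=\phi_\A^{-1}(\initial_\w(I))$, the left-hand side is itself a $\phi_\A^{*}\w$-homogeneous ideal whose $\prec$-initial ideal we can compute: by Proposition \ref{w} we have
\[
\initial_{\prec_{\phi_\A^{*}\w}}(\phi_\A^{-1}(I))=\initial_\prec\bigl(\initial_{\phi_\A^{*}\w}(\phi_\A^{-1}(I))\bigr)=\initial_\prec\bigl(\phi_\A^{-1}(\initial_\w(I))\bigr).
\]
So everything reduces to computing $\initial_\prec(\phi_\A^{-1}(J))$ for the monomial ideal $J=\initial_\w(I)$, which is exactly what Proposition \ref{monomial cases} does.

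Concretely, first I would invoke Proposition \ref{monomial cases}(1): if $G_\A$ is a Gr\"obner basis of $P_\A$ with respect to $\prec$, then $G_\A\cup M^{(\A)}_\prec(\initial_\w(I))$ is a Gr\"obner basis of $\phi_\A^{-1}(\initial_\w(I))$ with respect to $\prec$. Taking initial ideals, $\initial_\prec(\phi_\A^{-1}(\initial_\w(I)))=\langle\initial_\prec(g)\mid g\in G_\A\rangle+\langle M^{(\A)}_\prec(\initial_\w(I))\rangle=\initial_\prec(P_\A)+L^{(\A)}_\prec(\initial_\w(I))$, since the elements of $M^{(\A)}_\prec(\initial_\w(I))$ are themselves monomials generating $L^{(\A)}_\prec(\initial_\w(I))$ by definition. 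Chaining this with the displayed equality from Proposition \ref{w} gives part (1). Then (2) follows from Proposition \ref{monomial cases}(2) — or directly from Lemma \ref{pull-back of monomial ideals}(1) — since $\delta(\initial_\prec(P_\A)+L^{(\A)}_\prec(\initial_\w(I)))\le\max\{\delta(\initial_\prec(P_\A)),\delta(L^{(\A)}_\prec(\initial_\w(I)))\}\le\max\{\delta(\initial_\prec(P_\A)),\delta(\initial_\w(I))\}$; and (3) follows from Proposition \ref{monomial cases}(3), because a sum of two square-free monomial ideals is square-free.

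The only genuinely non-routine point is justifying the first equality in the display above, i.e.\ that $\initial_{\prec_{\phi_\A^{*}\w}}(K)=\initial_\prec(\initial_{\phi_\A^{*}\w}(K))$ can be applied with $K=\phi_\A^{-1}(I)$; but this is precisely Proposition \ref{w} with the weight vector $\phi_\A^{*}\w$ on $R$, so there is no obstacle — one just needs to note that $\phi_\A^{*}\w\in\N^r$ so the proposition applies verbatim. I would therefore expect the proof to be short: state the reduction via Propositions \ref{w} and \ref{monomial cases}, apply the hypothesis to replace $\initial_{\phi_\A^{*}\w}(\phi_\A^{-1}(I))$ by $\phi_\A^{-1}(\initial_\w(I))$, and read off (1), (2), (3). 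The main thing to be careful about is bookkeeping: making sure that "Gr\"obner basis with respect to $\prec$" of the monomial-coefficient ideal $\phi_\A^{-1}(\initial_\w(I))$ is the right object to feed into Proposition \ref{w}, and that the generators $M^{(\A)}_\prec(\initial_\w(I))$ really do generate $L^{(\A)}_\prec(\initial_\w(I))$ as claimed (immediate from the definition of $M^{(\A)}_\prec$).
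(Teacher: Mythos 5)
Your proposal matches the paper's proof exactly: both invoke Proposition~\ref{w} to write $\initial_{\prec_{\phi_\A^{*}\w}}(\phi_\A^{-1}(I))=\initial_\prec(\initial_{\phi_\A^{*}\w}(\phi_\A^{-1}(I)))$, substitute the hypothesis to replace the inner ideal by $\phi_\A^{-1}(\initial_\w(I))$, and then apply Proposition~\ref{monomial cases} to the monomial ideal $\initial_\w(I)$. The extra bookkeeping you supply for reading off (1)--(3) is correct and is precisely what the paper leaves implicit.
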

\begin{proof}
Since 
\[
\initial_{\prec_{\phi_\A^{*}\w}}(\phi_\A^{-1}(I))=\initial_\prec(\initial_{\phi_\A^{*}\w}(\phi_\A^{-1}(I)))
=\initial_\prec(\phi_\A^{-1}(\initial_\w(I))), 
\]
and $\initial_\w(I)$ is a monomial ideal, 
we obtain the assertion applying Proposition \ref{monomial cases} to the monomial ideal $\initial_\w(I)$. 
\end{proof}
In the rest of this paper, 
we investigate when the equality $\initial_{\phi_\A^{*}\w}(\phi_\A^{-1}(I))= \phi_\A^{-1}(\initial_\w(I))$ holds. 
\subsection{Pseudo-Gr\"obner bases}
To investigate when the equality 
$\initial_{\phi_\A^{*}\w}(\phi_\A^{-1}(I))= \phi_\A^{-1}(\initial_\w(I))$ 
holds, we extend the definition of pseudo-Gr\"obner bases to ideals of $\N$-graded rings. 
\begin{Definition}\label{pGB}
Let $A=\bigoplus_{i\in \N} A_i$ be an $\N$-graded ring and $f=\sum_i f_i\in A$ $(f_i\in A_i)$. 
We define $\initial_A(f)=f_d$ where $d=\deg(f)=\max\{i\mid f_i\neq 0\}$. 
For an ideal $I\subset A$, we define 
\[
\initial_A(I)=\langle \initial_A(f)\mid f\in I\rangle\subset A. 
\]
We say that a finite collection of polynomials $G\subset I$ is a pseudo-Gr\"obner basis of $I$ 
if $\langle \initial_A(g)\mid g\in G\rangle = \initial_A(I)$. 
\end{Definition}
It is easy to show that a pseudo-Gr\"obner basis of $I$ generates $I$. 
Let $A=\bigoplus_{i\in \N} A_i$ and $B=\bigoplus_{i\in \N} B_i$ be graded rings, 
and $\phi:A\to B$ a graded ring homomorphism of degree $0$, that is, $\phi(A_i)\subset B_i$ for all $i$. 
\begin{Lemma}\label{surjection}
Let $I$ be an ideal of $B$. Then the following hold: 
\begin{enumerate}
\item $\initial_A(\phi^{-1}(I))\subset \phi^{-1}(\initial_B(I))$. 
\item If $\phi$ is surjective, then $\initial_A(\phi^{-1}(I))= \phi^{-1}(\initial_B(I))$. 
\end{enumerate}
\end{Lemma}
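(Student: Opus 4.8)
The plan is to prove the two inclusions separately, with part (1) being the easy direction and part (2) requiring a genuine lifting argument that exploits surjectivity. For part (1), I would take a nonzero homogeneous generator of $\initial_A(\phi^{-1}(I))$, namely $\initial_A(f)$ for some $f\in\phi^{-1}(I)$. Writing $f=\sum_{i\le d}f_i$ with $f_d=\initial_A(f)\neq 0$ and $d=\deg(f)$, apply $\phi$: since $\phi$ has degree $0$, $\phi(f)=\sum_{i\le d}\phi(f_i)$ is the homogeneous decomposition of $\phi(f)\in I$ (some lower pieces may vanish, but nothing moves up in degree). Hence $\initial_B(\phi(f))=\phi(f_d)$ if $\phi(f_d)\neq 0$, so $\phi(f_d)\in\initial_B(I)$ and $f_d\in\phi^{-1}(\initial_B(I))$; and if $\phi(f_d)=0$ then $f_d\in\Ker\phi\subset\phi^{-1}(\initial_B(I))$ trivially. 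Either way $\initial_A(f)\in\phi^{-1}(\initial_B(I))$, which gives the inclusion of the generating set and hence of the ideals.

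For part (2), the reverse inclusion $\phi^{-1}(\initial_B(I))\subset\initial_A(\phi^{-1}(I))$ is the substantive point. Take a homogeneous element $g\in A_e$ with $\phi(g)\in\initial_B(I)$; I must produce $f\in\phi^{-1}(I)$ with $\initial_A(f)=g$ (up to adding something in $\Ker\phi$, which is already contained in $\initial_A(\phi^{-1}(I))$ since $\Ker\phi$ is a graded ideal annihilated in the quotient). Since $\phi(g)\in\initial_B(I)$ is homogeneous of degree $e$, write it as a finite sum $\phi(g)=\sum_k b_k\,\initial_B(h_k)$ with $h_k\in I$ and each term homogeneous of degree $e$; because $\phi$ is surjective and degree-preserving, lift each $b_k$ to $a_k\in A$ of the same degree and each $\initial_B(h_k)$ is $\phi$ of the leading form of a lift of $h_k$. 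The natural candidate is then $f:=g-\sum_k a_k\,\tilde h_k$ where $\tilde h_k\in\phi^{-1}(I)$ is chosen so that $\phi(\tilde h_k)=h_k$ and $\initial_A(\tilde h_k)$ maps onto $\initial_B(h_k)$; by construction $\phi(f)\in I$, and the degree-$e$ part of $f$ is $g-\sum_k a_k\initial_A(\tilde h_k)$, which maps to $0$ under $\phi$, i.e. lies in $\Ker\phi$.

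The main obstacle will be bookkeeping the degrees so that the subtraction really does kill the top-degree term down to something in $\Ker\phi$, rather than accidentally cancelling $g$ entirely or leaving a stray higher-degree contribution. The clean way to organize this is to argue by descending induction on nothing — rather, to observe that $\initial_A(\phi^{-1}(I))$ already contains the graded ideal $\Ker\phi$ (take $f\in\Ker\phi$ homogeneous, then $\initial_A(f)=f$), so it suffices to show $g$ lies in $\initial_A(\phi^{-1}(I))+\Ker\phi$. Then the computation above shows $g\equiv\sum_k a_k\,\initial_A(\tilde h_k)\pmod{\Ker\phi}$, and each $a_k\,\initial_A(\tilde h_k)$ is the leading form $\initial_A(a_k\tilde h_k)$ of the element $a_k\tilde h_k\in\phi^{-1}(I)$ provided the degrees add correctly — which they do, since $\deg a_k+\deg\initial_A(\tilde h_k)=\deg\phi(a_k)+\deg\initial_B(h_k)=e$ forces $a_k\tilde h_k$ to have leading degree exactly $e$ (any cancellation only lowers it, but then $a_k\initial_A(\tilde h_k)$ would be in $\Ker\phi$ and can be absorbed). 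Collecting terms gives $g\in\initial_A(\phi^{-1}(I))$, completing the proof; part (1) combined with this yields the asserted equality.
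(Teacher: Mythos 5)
Your part (1) is the same argument as the paper's. For part (2) you take a genuinely different route: you argue on explicit generators, writing a homogeneous $g$ with $\phi(g)\in\initial_B(I)$ as $g\equiv\sum_k a_k\initial_A(\tilde h_k)\pmod{\Ker\phi}$ via degree-preserving lifts, observing that each nonzero summand is $\initial_A$ of an element of $\phi^{-1}(I)$ and that $\Ker\phi$ (being a homogeneous ideal contained in $\phi^{-1}(I)$) already sits inside $\initial_A(\phi^{-1}(I))$. The paper instead argues once and for all: since $A/\Ker\phi\cong B$ as $\N$-graded rings and $\phi^{-1}(I)/\Ker\phi\cong I$, the two sides of (2) agree modulo $\Ker\phi$, and both contain $\Ker\phi$. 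Your approach is more explicit (one sees concretely how leading forms lift, which is in the spirit of the later $\lift_\prec$ machinery), while the paper's quotient argument is shorter and avoids the bookkeeping about possible vanishing of products. One small slip in your middle paragraph: for $f=g-\sum_k a_k\tilde h_k$ you assert ``by construction $\phi(f)\in I$,'' but $\phi(f)=\phi(g)-\sum_k b_k h_k$ and $\phi(g)$ lies in $\initial_B(I)$, not in $I$, so that $f$ is not actually a witness in $\phi^{-1}(I)$. The slip is harmless because your closing paragraph abandons $f$ and argues directly modulo $\Ker\phi$, which is correct.
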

\begin{proof}
$(1)$ Let $f=\sum_{i=1}^d f_i\in \phi^{-1}(I)$ where $f_i\in A_i$ and $f_d\neq 0$. 
Then $\initial_A(f)=f_d$, $\phi(f)=\sum_{i=1}^d \phi(f_i)\in I$, and $\phi(f_i)\in B_i$. 
Hence $\phi(f_d)=0$ or $\phi(f_d)=\initial_B(\phi(f))\in \initial_B(I)$, and thus $\phi(\initial_A(f))\in \initial_B(I)$. 

$(2)$ Since $A/\Ker\phi\cong B$ as $\N$-graded rings, and $\phi^{-1}(I)/\Ker\phi\cong I$ as $\N$-graded ideals, 
$\initial_A(\phi^{-1}(I))$ coincides with $\phi^{-1}(\initial_B(I))$ module $\Ker \phi$. 
Since $\Ker \phi$ is a homogeneous ideal of $A$, $\Ker\phi\subset\initial_A(\phi^{-1}(I))$, 
and it is clear that $\Ker\phi\subset \phi^{-1}(\initial_B(I))$. 
Hence we conclude the assertion. 
\end{proof}
\subsection{Sufficient condition for that initial commutes with contraction}
Now, we return to the problem 
when the equality $\initial_{\phi_\A^{*}\w}(\phi_\A^{-1}(I))= \phi_\A^{-1}(\initial_\w(I))$ holds. 
The homomorphism $\phi_\A:R\to S$ can be decomposed to the surjection $R\to K[\A]$ and the inclusion $K[\A]\hookrightarrow S$. 
Note that since $K[\A]$ has an $\N$-graded ring structure induced by $\w$, 
we can consider pseudo-Gr\"obner bases of ideals of $K[\A]$ in the sense of Definition \ref{pGB}. 
Note that $\initial_{K[\A]}(f)=\initial_\w(f)$ for $f\in K[\A]$. 
By Lemma \ref{surjection}, the equality 
\[
\initial_{\phi_\A^{*}\w}(\phi_\A^{-1}(I))= \phi_\A^{-1}(\initial_\w(I)) 
\]
holds if and only if the equality 
\[
\initial_{K[\A]}(I\cap K[\A])=\initial_\w(I)\cap K[\A] 
\]
holds. 
To obtain a class of monomial algebra $K[\A]$ such that this equality holds, 
we define subrings of graded rings. 
\begin{Definition}
Let $\G$ be a semigroup, and $S=\bigoplus_{\vv\in \G}S_\vv$ a $\G$-graded ring. 
For a subsemigroup $\H\subset \G$, we define 
\[
S^{(\H)}=\bigoplus_{\vv\in \H}S_\vv, 
\]
a graded subring of $S$. 
\end{Definition}
Let $S=K[{y}_1, \dots, {y}_s]$ a $\Z^d$-graded polynomial ring with $\deg_{\Z^d}({y}_i)=\vv_i$ where $\vv_i\in \Z^d$ is a column vector. 
We set $\V=(\vv_1,\dots,\vv_s)$, and let $\H$ be a finitely generated subsemigroup of $\N\V\subset \Z^d$. 
Then 
\[
S^{(\H)}=K[\y^\a\mid \a\in\N^s, \V\cdot \a \in \H], 
\]
and thus $S^{(\H)}$ is Noetherian if and only if $\{\a\in\N^s \mid \V\cdot \a \in \H\}$ is finitely generated as a semigroup. 
We will give sufficient conditions that $S^{(\H)}$ is Noetherian. 
\begin{Definition}
We say that a semigroup $\H\subset\Z^d$ is {\it normal } if $\H=L\cap C$ 
for some sublattice $L\subset\Z^d$ and finitely generated rational cone $C\subset \Q^d$. 
\end{Definition}
It is well-known that normal semigroups are finitely generated (Gordan's Lemma). 
\begin{Proposition}\label{noetherian condition}
Let $S=K[{y}_1, \dots, {y}_s]$ a $\Z^d$-graded polynomial ring with $\deg_{\Z^d}({y}_i)=\vv_i\in \Z^d$. 
We set $\V=(\vv_1,\dots,\vv_s)$, and let $\H$ be a finitely generated subsemigroup of $\N\V$. 
Assume one of the following conditions. 
\begin{enumerate}
\item $\H$ is a normal affine semigroup. 
\item There exists $1\le i_1 <\dots <i_{t}\le s$ such that 
$\{\vv_1,\dots,\vv_s\}=\{\vv_{i_1},\dots,\vv_{i_t}\}$, and $\vv_{i_1},\dots,\vv_{i_t}$ are linearly independent. 
\end{enumerate}
Then $S^{(\H)}$ is Noetherian. 
\end{Proposition}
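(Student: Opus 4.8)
The plan is to show $S^{(\H)}$ is Noetherian by proving in each case that the semigroup $\Gamma := \{\a \in \N^s \mid \V \cdot \a \in \H\}$ is finitely generated; equivalently, that $S^{(\H)} = K[\y^\a \mid \a \in \Gamma]$ is a finitely generated $K$-algebra, which for a subalgebra of a polynomial ring is equivalent to being Noetherian. Throughout I will use that $\H$ is itself finitely generated, say $\H = \N \h_1 + \dots + \N \h_q$ with $\h_k \in \N\V \subset \Z^d$.

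For case (1), I would invoke the standard structure theory of normal affine semigroup rings. Since $\H = L \cap C$ for a sublattice $L \subset \Z^d$ and a finitely generated rational cone $C \subset \Q^d$, consider the cone $\tilde C = \{\a \in \Q^s_{\ge 0} \mid \V \cdot \a \in C\}$, which is a finitely generated rational cone (the preimage of a finitely generated cone under a linear map, intersected with the orthant), and the sublattice $\tilde L = \{\a \in \Z^s \mid \V \cdot \a \in L\}$. Then $\Gamma = \tilde L \cap \tilde C \cap \N^s = \tilde L \cap \tilde C$ (the last equality since $\tilde C \subset \Q^s_{\ge 0}$ already), so $\Gamma$ is a normal affine semigroup and hence finitely generated by Gordan's Lemma, which the excerpt allows me to cite. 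Thus $S^{(\H)}$ is Noetherian.

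For case (2), after relabeling I may assume $\vv_1, \dots, \vv_t$ are linearly independent and $\{\vv_1,\dots,\vv_s\} = \{\vv_1,\dots,\vv_t\}$, so every $\vv_j$ with $j > t$ equals some $\vv_{\sigma(j)}$ with $\sigma(j) \le t$. The key point is that $\V \cdot \a$ depends only on the "folded" vector $\a' \in \N^t$ obtained by adding the coordinate $a_j$ onto coordinate $\sigma(j)$ for each $j > t$, and since $\vv_1,\dots,\vv_t$ are linearly independent, the map $\N^t \to \N\V$, $\a' \mapsto \sum_{i\le t} a'_i \vv_i$, is injective. Hence $\V \cdot \a \in \H$ is a condition purely on $\a'$, cutting out a subsemigroup $\Gamma' \subset \N^t$; because the linear map is injective, $\Gamma'$ equals the preimage of the finitely generated semigroup $\H$ inside the free semigroup on the independent generators $\vv_1,\dots,\vv_t$, and one checks directly that this preimage is generated by the finitely many minimal solutions — concretely, $\Gamma' = \{\a' \in \N^t \mid \sum a'_i \vv_i \in \H\}$ is finitely generated because it is isomorphic (via the injective linear map) to $\H \cap (\N\vv_1 + \dots + \N\vv_t)$, an intersection of two finitely generated subsemigroups of $\Z^d$ sitting inside a group where such intersections are finitely generated. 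Then $\Gamma$ is generated by (lifts of) generators of $\Gamma'$ together with the finitely many differences $\e_j - \e_{\sigma(j)}$ for $j > t$, so $\Gamma$ is finitely generated and $S^{(\H)}$ is Noetherian.

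The main obstacle I anticipate is case (2): making rigorous the claim that the fibers of the "folding" map are handled by the finitely many relations $y_j = \y^{\e_{\sigma(j)}}$ (in degree terms, $\e_j - \e_{\sigma(j)} \in \Gamma - \Gamma$) and that the intersection $\H \cap (\N\vv_1 + \dots + \N\vv_t)$ is finitely generated — the latter needs the linear independence of $\vv_1,\dots,\vv_t$ in an essential way, since intersections of finitely generated subsemigroups of $\Z^d$ need not be finitely generated in general; here it works because $\N\vv_1 + \dots + \N\vv_t$ is a free (pointed, simplicial) affine semigroup, so the intersection with the finitely generated $\H$ can be computed inside the associated lattice and is normal-like. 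I would make this precise by pulling everything back along the injection $\N^t \hookrightarrow \Z^d$ and reducing to the statement that $\{\a' \in \N^t \mid \text{finitely many } \Z\text{-linear inequalities and congruences on } \a'\}$ is a finitely generated semigroup, which again is Gordan's Lemma.
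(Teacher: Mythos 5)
Case~(1) is fine and essentially identical to the paper's argument: pull back the lattice and the cone defining $\H$, intersect with the orthant, observe the preimage semigroup $\widetilde{\H}$ is again normal, and invoke Gordan's Lemma.

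Case~(2), however, has a genuine gap. You correctly reduce to showing $\Gamma=\{\a\in\N^s\mid\V\cdot\a\in\H\}$ is finitely generated, and correctly note that the ``folded'' semigroup $\Gamma'\subset\N^t$ is isomorphic to $\H$, hence finitely generated. But the step that lifts finite generation from $\Gamma'$ back to $\Gamma$ is wrong as written: you propose to generate $\Gamma$ by lifts of the generators of $\Gamma'$ together with the vectors $\e_j-\e_{\sigma(j)}$ for $j>t$. Those differences have a negative coordinate, so they do not lie in $\N^s$, let alone in $\Gamma$, and cannot appear as semigroup generators of a subsemigroup of $\N^s$. Your proposed repair at the end --- cut $\Gamma'$ out by finitely many linear inequalities and congruences and apply Gordan's Lemma again --- also fails, because in case~(2) the semigroup $\H$ is only assumed finitely generated, not normal; a finitely generated subsemigroup of $\Z^d$ (e.g.\ $\N\{2,3\}\subset\Z$) need not be describable by linear inequalities and congruences, so Gordan's Lemma is unavailable.

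What does work (and is what the paper actually does) is to take, for each generator $\w_j$ of $\H$, the \emph{entire} fiber $\Fiber_\V(\w_j)\cap\N^s$; linear independence of the $\vv_{i_k}$ forces every such fiber to lie in a single degree $\abs{\a}=m_j$, hence to be finite. One then shows this finite union generates $\widetilde{\H}=\Gamma$ by induction on $\abs{\a}$: write $\V\cdot\a=\sum_j b_j\w_j$ with some $b_1>0$, peel off an $\a'\le\a$ componentwise with $\V\cdot\a'=\w_1$ (possible by comparing coordinates in the $\vv_{i_k}$-basis), and recurse on $\a-\a'$. Equivalently, in your language, replace ``one lift per generator plus differences'' by ``all finitely many preimages of each generator'', and replace the Gordan step by the observation that any expression of $\pi(\a)$ as a sum of generators of $\Gamma'$ can be lifted to a decomposition of $\a$ in $\N^s$ because the system $a_{1,j}+a_{2,j}=a_j$, $\sum_{j\in A_k}a_{i,j}=(\a'_i)_k$ is a feasible transportation problem over $\N$. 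Either of these completes the argument; as it stands, your case~(2) is not a proof.
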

\begin{proof}
Set $\widetilde{\H}=\{\a\in\N^s \mid \V\cdot \a \in \H\}$. 
Recall that  $S^{(\H)}$ is Noetherian if and only if $\widetilde{\H}$ is finitely generated semigroup. 

(1) As $\H$ is normal, there exist sublattice $L\subset \Z^d$ and finitely generated rational cone $C\subset \Q^d$ such that $\H=L\cap C$. 
We regard $\V$ as a $\Q$-linear morphism from $\Q^s$ to $\Q^d$. 
Let $\widetilde{L}$ and $\widetilde{C}$ denote the pull-back of $L$ and $C$ under $\V$, respectively. 
Then $\widetilde{\H}=\widetilde{L}\cap \widetilde{C}\cap \N^s=\widetilde{L}\cap\Z^d\cap \widetilde{C}\cap \R_{\ge 0}^s$. 
Since $\widetilde{L}\cap\Z^d$ is a sublattice of $\Z^s$, and $\widetilde{C}\cap \R_{\ge 0}^s$ is a finitely generated rational cone, 
$\widetilde{\H}$ is also normal. 
Therefore, $\widetilde{\H}$ is finitely generated by Gordan's Lemma. 

(2) Note that $\Z\V=\Z\vv_{i_1}\oplus \dots \oplus \Z\vv_{i_t}$. 
Let $\{\w_1,\dots, \w_\ell\} \subset \N\V$ be a system of generators of $\H$. 
Since $\vv_{i_1},\dots,\vv_{i_t}$ are linearly independent, $\Fiber_\V(\w_j)\cap\N^s =\{\a\in \N^s \mid \A\cdot \a =\w_j\}$ 
is a subset of $\{\a\in \N^s\mid \abs{\a}=m_j\}$ for some positive integer $m_j>0$. 
Thus $\Fiber_\V(\w_j)\cap\N^s$ is a finite set. 
We claim that $\bigcup_{j=1}^\ell \Fiber_\V(\w_j)\cap\N^s$ is a finite system of generators of $\widetilde{\H}$. 

We denote by $\G$ the affine semigroup generated by $\bigcup_{j=1}^\ell \Fiber_\V(\w_j)\cap\N^s$. 
Then $\G\subset \widetilde{\H}$. 
For the converse inclusion, let $\a={}^t(a_1,\dots,a_s)\in \widetilde{\H}$. 
We will prove $\a\in\G$ by induction on $\abs{\a}$. 
It is trivial if $\abs{\a}=0$. 
There exist $b_1,\dots,b_\ell\in \N$ such that $\V\cdot \a=\sum_{j=1}^\ell b_j\w_j$. 
Without loss of generality, we may assume that $b_1\neq 0$. 
It is easy to show that there exists $\a'={}^t(a'_1,\dots,a'_s)\in \N^s$ such that $\V\cdot\a'=\w_1$ and $a'_i\le a_i$ for all $1\le i\le s$. 
Since $\V\cdot(\a-\a')=(b_1-1)\w_1+\sum_{j=2}^\ell b_j\w_j\in \H$, $\a-\a'\in \N^s$, and $\abs{\a-\a'}<\abs{\a}$, 
we have $\a-\a'\in \G$ by the hypothesis of induction. 
As $\a'\in \Fiber_\V(\w_1)\cap\N^s\subset \G$, and $\a=(\a-\a')+\a'$, we conclude that $\a\in \G$. 
\end{proof}
In the rest part of this paper, we consider only the case where $S^{(\H)}$ is Noetherian, and use the following notation. 
\begin{Notation}\label{notation}
Let $S=K[{y}_1, \dots, {y}_s]$, and $d>0$ a positive integer. 
We fix a $d\times s$ integer matrix $\V=(\vv_1,\dots,\vv_s)$ with the column vectors $\vv_1, \dots, \vv_s\in \Z^d$. 
We define a $\Z^d$-graded structure on $S$ by setting $\deg_{\Z^d}({y}_i)=\vv_i$. 
Then $\deg_{\Z^d} \y^\a=\V\cdot \a$ for $\a\in \N^s$, and $S=\bigoplus_{\vv \in \Z^d} S_\vv$ 
where $S_\vv$ is the $K$-vector space spanned by all monomials in $S$ of multi-degree $\vv$. 
Let $\H$ be a finitely generated subsemigroup of $\N\V$. 
We assume that the semigroup 
\[
\widetilde{\H}:=\{\a={}^t(a_1,\dots,a_s)\mid \V\cdot\a\in \H\} 
\]
is finitely generated. 
Let $\A_\H=\{\a^{(1)}, \dots, \a^{(r)}\}\subset \N^s$ be a system of generators  of $\widetilde{\H}$ as a semigroup. 
Then $S^{(\H)}=K[\A_\H]$. 
Let $R^{[\H]}=K[{x}_1, \dots, {x}_r]$ a polynomial ring over $K$, 
and $\phi_{\A_\H}:R^{[\H]}\to S$, $x_i\mapsto\y^{\a^{(i)}}$, the monomial homomorphism. 
\end{Notation}
Remark that $\A_\H$ is not always a configuration. 
\begin{Definition}
For $\vv \in \Z^d$, we define 
\[
C_\H(\vv)=\bigoplus_{\uu\in (-\vv +\H)\cap \Z^d}S_\uu, 
\]
a $\Z^d$-graded ${K[\A_\H]}$-submodule of $S$. 
Let $\Gamma_\H(\vv)$ be the minimal system of generators of $C_\H(\vv)$ as an ${K[\A_\H]}$-module consisting of monomials in $S$. 
\end{Definition}
If $S_\vv \neq 0$, then $C_\H(\vv)\cong f\cdot C_\H(\vv) \subset {K[\A_\H]}$ for any $0\neq f\in S_\vv$. 
Hence $C_\H(\vv)$ is isomorphic to an ideal of ${K[\A_\H]}$ up to shift of grading, in particular, finitely generated over ${K[\A_\H]}$. 
\begin{Lemma}\label{injection}
Let the notation be as in Notation \ref{notation}. 
Fix a weight vector $\w\in \N^s$ on $S$, and regard $S$ and $K[\A_\H]$ as $\N$-graded rings. 
Let $I$ be a $\Z^d$-graded ideal with $\Z^d$-homogeneous system of generators $F=\{f_1, \dots, f_\ell\}$ 
with $\deg_{\Z^d}(f_i)={{{\vv}}_i}\in \Z^d$. 
Then the following hold: 
\begin{enumerate}
\item 
$I \cap {K[\A_\H]}$ is generated by $\{\y^\a\cdot f_i\mid i\in [\ell], ~~\y^\a \in \Gamma_\H({{\vv}}_i) \}$. 
\item 
$\initial_{{K[\A_\H]}}(I\cap {K[\A_\H]})=\initial_\w(I)\cap {K[\A_\H]}$. 
\item If $F$ is a pseudo-Gr\"obner basis of $I$ with respect to $\w$, 
then 
\[
\{\y^\a\cdot f_i\mid i\in [\ell], ~~\y^\a \in \Gamma_\H({{\vv}}_i)\}
\]
is a pseudo-Gr\"obner basis of $I\cap {K[\A_\H]}$ in sense of Definition \ref{pGB}. 
\end{enumerate}
\end{Lemma}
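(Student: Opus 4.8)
The plan is to prove (1) first by a direct degree-chasing argument, then deduce (3) by combining (1) with the definition of a pseudo-Gr\"obner basis, and finally obtain (2) as an immediate consequence of (3). For (1), write $J$ for the ideal of $K[\A_\H]$ generated by $\{\y^\a\cdot f_i\mid i\in[\ell],~\y^\a\in\Gamma_\H(\vv_i)\}$. The inclusion $J\subset I\cap K[\A_\H]$ is clear, since each $\y^\a f_i$ lies in $I$, and $\y^\a\in\Gamma_\H(\vv_i)$ means $\deg_{\Z^d}(\y^\a f_i)=\deg_{\Z^d}(\y^\a)+\vv_i\in \H$, so $\y^\a f_i\in S^{(\H)}=K[\A_\H]$. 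For the reverse inclusion, take a nonzero $\Z^d$-homogeneous element $g\in I\cap K[\A_\H]$ of multi-degree $\vv\in\H$; since $F$ generates $I$ as an ideal of $S$, we may write $g=\sum_{i=1}^\ell h_i f_i$ with $h_i\in S$, and by extracting multi-degrees we may assume each $h_i$ is $\Z^d$-homogeneous with $\deg_{\Z^d}(h_i f_i)=\vv$, hence $\deg_{\Z^d}(h_i)=\vv-\vv_i$. Then $\vv-\vv_i\in(-\vv_i+\H)\cap\Z^d$ when $h_i\neq 0$, so $h_i\in (C_\H(\vv_i))_{\vv-\vv_i}$, and by definition of $\Gamma_\H(\vv_i)$ as a monomial generating set of the $K[\A_\H]$-module $C_\H(\vv_i)$ we can write $h_i=\sum_{\y^\a\in\Gamma_\H(\vv_i)}c_{i,\a}\,\y^\a$ with $c_{i,\a}\in K[\A_\H]$. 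Substituting gives $g=\sum_{i,\a}c_{i,\a}(\y^\a f_i)\in J$. Since $I\cap K[\A_\H]$ is $\Z^d$-graded, this proves $I\cap K[\A_\H]\subset J$.

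For (3), suppose $F$ is a pseudo-Gr\"obner basis of $I$ with respect to $\w$, so $\initial_\w(I)=\langle\initial_\w(f_i)\mid i\in[\ell]\rangle$. I need to show $\initial_{K[\A_\H]}(I\cap K[\A_\H])=\langle \initial_{K[\A_\H]}(\y^\a f_i)\mid i\in[\ell],~\y^\a\in\Gamma_\H(\vv_i)\rangle$, where the right-hand side is an ideal of $K[\A_\H]$. The key point is that since $\y^\a$ is a monomial, $\initial_\w(\y^\a f_i)=\y^\a\,\initial_\w(f_i)$, and since $\y^\a f_i\in K[\A_\H]$ we have $\initial_{K[\A_\H]}(\y^\a f_i)=\initial_\w(\y^\a f_i)=\y^\a\,\initial_\w(f_i)$. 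Now take $g\in I\cap K[\A_\H]$; then $\initial_{K[\A_\H]}(g)=\initial_\w(g)\in\initial_\w(I)$ (using Lemma \ref{surjection}(1) applied to the inclusion $K[\A_\H]\hookrightarrow S$, or directly), and I must realize $\initial_\w(g)$ inside the claimed ideal of $K[\A_\H]$. Using the $\w$-homogeneous expression $\initial_\w(g)=\sum_i h'_i\,\initial_\w(f_i)$ with each $h'_i\in S$ of the appropriate $\w$-degree, I then decompose $h'_i$ into $\Gamma_\H(\vv_i)$-components exactly as in the proof of (1) — here one checks that the $\Z^d$-degrees still land in $(-\vv_i+\H)\cap\Z^d$ because $\initial_\w(g)$ inherits the multi-degree $\vv$ of $g$ (the ideal $I$ is $\Z^d$-graded and the $\w$-grading is a coarsening compatible with it via $\w\cdot\A$) — to get $\initial_\w(g)\in\langle\y^\a\initial_\w(f_i)\rangle$ as desired. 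Part (2) then follows: "$\subset$" is Lemma \ref{surjection}(1) or the first half of the above, and "$\supset$" holds because each generator $\y^\a\initial_\w(f_i)=\initial_{K[\A_\H]}(\y^\a f_i)$ with $\y^\a f_i\in I\cap K[\A_\H]$, hence lies in $\initial_{K[\A_\H]}(I\cap K[\A_\H])$; combined with (3) this gives equality.

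The main obstacle I anticipate is the bookkeeping in (3): one must be careful that the $\Z^d$-degree of $\initial_\w(g)$ genuinely equals that of $g$ so that the module-membership $h'_i\in C_\H(\vv_i)$ (needed to expand $h'_i$ over $\Gamma_\H(\vv_i)$) is legitimate, and that the two gradings interact correctly — the $\N$-grading by $\w$ on $S$ and the $\Z^d$-grading by $\V$ must be simultaneously respected, which is why the hypotheses impose $\deg_{\Z^d}(f_i)=\vv_i$ and a single common $\w$. A secondary subtlety is that $\Gamma_\H(\vv_i)$ is only a generating set, not necessarily minimal in a way that matters, and one should not need minimality — any monomial $K[\A_\H]$-generating set of $C_\H(\vv_i)$ works, so I will phrase the argument to use only that $\Gamma_\H(\vv_i)$ generates $C_\H(\vv_i)$. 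Once these grading compatibilities are pinned down, both directions are routine substitutions.
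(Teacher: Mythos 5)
Your proof is correct and follows essentially the same route as the paper: part (1) by extracting $\Z^d$-homogeneous components and expanding each $h_i$ over $\Gamma_\H(\vv_i)$, and parts (2)--(3) by running the same decomposition for the monomial ideal $\initial_\w(I)$ together with the identity $\initial_\w(\y^\a f_i)=\y^\a\,\initial_\w(f_i)=\initial_{K[\A_\H]}(\y^\a f_i)$. The paper packages (2)--(3) more compactly by simply invoking part (1) applied to $\initial_\w(I)$ with generators $\{\initial_\w(f_i)\}$ (noting $\initial_\w(I)$ is $\Z^d$-graded and $\initial_\w(f_i)\in S_{\vv_i}$), whereas you re-expand the $h'_i$ by hand; that is a cosmetic difference. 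One small misstatement to correct: your parenthetical that ``the $\w$-grading is a coarsening compatible with it via $\w\cdot\A$'' is not the right justification --- the weight $\w$ on $S$ is arbitrary and has no a priori relation to the $\Z^d$-grading by $\V$, and $\w\cdot\A$ is the pulled-back weight on $R$, not relevant here. The fact you actually need, that $\initial_\w(g)$ has the same $\Z^d$-degree as $g$ whenever $g$ is $\Z^d$-homogeneous, is true for the elementary reason that $\initial_\w(g)$ is a subsum of the monomials of $g$, all of which share that $\Z^d$-degree; no compatibility between the two gradings is required.
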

\begin{proof}
(1) 
For $i\in [\ell]$ and $\y^\a \in \Gamma_\H({{\vv}}_i)$, 
$\y^\a\cdot f_i$ is a $\Z^d$-homogeneous element whose degree is in $\H$ by the definition of $ \Gamma_\H({{\vv}}_i)$, 
thus $\y^\a\cdot f_i\in K[\A]$. 
Let $J$ be the ideal of $K[\A_\H]$ generated by $\{\y^\a\cdot f_i\mid i\in [\ell], ~~\y^\a \in \Gamma_\H({{\vv}}_i) \}$. 
Then $J\subset I\cap {K[\A_\H]}$. 

For the converse inclusion, take a $\Z^d$-homogeneous element $g\in I\cap {K[\A_\H]}$, $\deg(g)={{\vv}}$, 
and write $g=\sum h_if_i$ where $h_i$'s are $\Z^d$-homogeneous elements with $\deg(h_if_i)={{\vv}}$. 
Since ${{\vv}}\in \H$, it holds that $h_if_i\in I\cap {K[\A_\H]}$, and thus $\deg_{\Z^d}(h_i)+{{\vv}}_i\in \H$. 
Hence $h_i\in C_\H({{\vv}}_i)$. Therefore it follows that $g\in J$. 

(2), (3) 
Assume that $\{f_1, \dots, f_\ell\}$ is a pseudo-Gr\"obner basis with respect to $\w$. 
Since $\initial_\w(I)$ is also $\Z^d$-graded ideal and $\initial_\w(f_i)\in S_{{{\vv}}_i}$, 
the contraction ideal $\initial_\w(I)\cap {K[\A_\H]}$ is generated by 
$\{\y^\a\cdot \initial_\w(f_i)\mid i\in [\ell], ~~\y^\a \in \Gamma_\H({{\vv}}_i)\}$. 
As $\y^\a\cdot \initial_\w(f_i)=\initial_\w(\y^\a\cdot f_i)$, 
we conclude $\initial_{{K[\A_\H]}}(I\cap {K[\A_\H]})=\initial_\w(I)\cap {K[\A_\H]}$. 
\end{proof}
\begin{Proposition}\label{=}
Let the notation as in Notation \ref{notation}. 
We denote by ${\w'}$ the weight vector $\phi_{\A_\H}^{*}\w=(\deg_\w\phi_{\A_\H}({x}_1), \dots, \deg_\w\phi_{\A_\H}({x}_r))$. 
Then  
\[
\phi_{\A_\H}^{-1}(\initial_\w(I))=\initial_{{\w'}}(\phi_{\A_\H}^{-1}(I)). 
\]
\end{Proposition}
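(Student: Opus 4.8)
The plan is to reduce the equality to the monomial-algebra statement that has already been set up, namely the equivalence (via Lemma~\ref{surjection}) between
\[
\initial_{\phi_{\A_\H}^{*}\w}(\phi_{\A_\H}^{-1}(I))=\phi_{\A_\H}^{-1}(\initial_\w(I))
\]
and
\[
\initial_{K[\A_\H]}(I\cap K[\A_\H])=\initial_\w(I)\cap K[\A_\H].
\]
Since $\phi_{\A_\H}$ factors as the surjection $R^{[\H]}\twoheadrightarrow K[\A_\H]$ followed by the inclusion $K[\A_\H]\hookrightarrow S$, and $\w'=\phi_{\A_\H}^{*}\w$ is exactly the weight grading pulled back along the surjection, Lemma~\ref{surjection}(2) gives $\initial_{\w'}(\phi_{\A_\H}^{-1}(I))=(\text{surjection})^{-1}(\initial_{K[\A_\H]}(I\cap K[\A_\H]))$, and likewise $\phi_{\A_\H}^{-1}(\initial_\w(I))=(\text{surjection})^{-1}(\initial_\w(I)\cap K[\A_\H])$. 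So it suffices to prove the inner equality in $K[\A_\H]$. The inclusion $\initial_{K[\A_\H]}(I\cap K[\A_\H])\subseteq\initial_\w(I)\cap K[\A_\H]$ is immediate from Lemma~\ref{surjection}(1) (applied to the inclusion) together with the observation $\initial_{K[\A_\H]}(g)=\initial_\w(g)$ for $g\in K[\A_\H]$; the work is the reverse inclusion.

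For the reverse inclusion I would argue as follows. Fix a $\Z^d$-homogeneous pseudo-Gr\"obner basis $F=\{f_1,\dots,f_\ell\}$ of $I$ with respect to $\w$, with $\deg_{\Z^d}(f_i)=\vv_i$ (such $F$ exists: take $\Z^d$-homogeneous generators and enlarge to a pseudo-Gr\"obner basis, which can be done respecting the $\Z^d$-grading since $\initial_\w(I)$ is $\Z^d$-graded). By Lemma~\ref{injection}(3), the set $G=\{\y^\a\cdot f_i\mid i\in[\ell],\ \y^\a\in\Gamma_\H(\vv_i)\}$ is a pseudo-Gr\"obner basis of $I\cap K[\A_\H]$ in the sense of Definition~\ref{pGB}, so $\initial_{K[\A_\H]}(I\cap K[\A_\H])$ is generated by $\{\initial_{K[\A_\H]}(\y^\a f_i)\}=\{\y^\a\,\initial_\w(f_i)\}$. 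On the other hand, $\initial_\w(I)$ is a monomial ideal, and by Lemma~\ref{injection}(2) (or directly, since $\{\initial_\w(f_i)\}$ generate $\initial_\w(I)$) the contraction $\initial_\w(I)\cap K[\A_\H]$ is generated by $\{\y^\a\,\initial_\w(f_i)\mid i\in[\ell],\ \y^\a\in\Gamma_\H(\vv_i)\}$. The two ideals therefore have the same generating set, hence coincide.

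The main obstacle, and the point that needs care, is justifying that the generators of the contraction $\initial_\w(I)\cap K[\A_\H]$ are exactly the $\y^\a\,\initial_\w(f_i)$ with $\y^\a\in\Gamma_\H(\vv_i)$: this is the content of applying part~(1) of Lemma~\ref{injection} to the $\Z^d$-graded ideal $\initial_\w(I)$ with its $\Z^d$-homogeneous generators $\initial_\w(f_i)$ (which have the same multidegrees $\vv_i$ as the $f_i$), and it relies crucially on $\initial_\w(I)$ being $\Z^d$-graded — which holds because $\w$ was chosen so that $\initial_\w(I)$ is a monomial ideal and $I$ is $\Z^d$-graded. Once that matching of generators is in place, everything else is bookkeeping: the commutation $\y^\a\,\initial_\w(f_i)=\initial_\w(\y^\a f_i)=\initial_{K[\A_\H]}(\y^\a f_i)$ is trivial because $\y^\a$ is a monomial, and the pull-back along the surjection preserves the equality of ideals since $\Ker\phi_{\A_\H}=P_{\A_\H}$ is $\w'$-homogeneous and contained in both sides. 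I would also double-check that $F$ can indeed be chosen simultaneously $\Z^d$-homogeneous and a $\w$-pseudo-Gr\"obner basis; this is the one spot where a careless reader might object, so I would spell it out.
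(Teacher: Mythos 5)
Your proposal is correct and takes essentially the same route as the paper: factor $\phi_{\A_\H}$ through $K[\A_\H]$, apply Lemma~\ref{surjection}(2) to the surjective part, and reduce to the identity $\initial_{K[\A_\H]}(I\cap K[\A_\H])=\initial_\w(I)\cap K[\A_\H]$. The only difference is cosmetic: you spell out a proof of that inner equality via Lemma~\ref{injection}(1),(3) and a matching of generators, whereas the paper simply invokes Lemma~\ref{injection}(2), which it had already proved by exactly the argument you reproduce (including the point you flag about choosing $F$ to be both $\Z^d$-homogeneous and a $\w$-pseudo-Gr\"obner basis).
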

\begin{proof}
We also denote by $\phi_{\A_\H}$ the surjection $R\to {K[\A_\H]}$. 
By Lemma \ref{injection}, 
\[
\initial_{{K[\A_\H]}}(I\cap {K[\A_\H]})=\initial_\w(I)\cap {K[\A_\H]}, 
\]
and by Lemma \ref{surjection} (2), 
\[
\phi_{\A_\H}^{-1}(\initial_{{K[\A_\H]}}(I\cap {K[\A_\H]}))=\initial_{{\w'}}(\phi_{\A_\H}^{-1}(I\cap {K[\A_\H]})). 
\]
Thus $\phi_{\A_\H}^{-1}(\initial_\w(I)\cap {K[\A_\H]})=\initial_{{\w'}}(\phi_{\A_\H}^{-1}(I\cap {K[\A_\H]}))$. 
Since  $\phi_{\A_\H}^{-1}(J\cap {K[\A_\H]})=\phi_{\A_\H}^{-1}(J)$ for an ideal $J\subset S$, 
we conclude that $\phi_{\A_\H}^{-1}(\initial_\w(I))=\initial_{{\w'}}(\phi_{\A_\H}^{-1}(I)). 
$. 
\end{proof}
\begin{Theorem}\label{main}
Let the notation be as in Notation \ref{notation}. 
Let $I$ be a $\Z^d$-graded ideal, 
$\prec$ a term order on $R^{[\H]}$, and $\w\in \N^s$ a weight vector of $S$ such that 
$\initial_\w(I)$ is a monomial ideal. 
We denote by ${\w'}$ the weight vector $\phi_{\A_\H}^{*}\w=(\deg_\w\phi_{\A_\H}({x}_1), \dots, \deg_\w\phi_{\A_\H}({x}_r))$. 
Then the following hold: 
\begin{enumerate}
\item 
$\delta(\initial_{\prec_{{\w'}}}(\phi_{\A_\H}^{-1}(I)))\le \max\{\delta(\initial_\w(I)), 
\delta(\initial_\prec(P_{\A_\H}))\}$. 
\item If both of $\initial_{\w}(I)$ and $\initial_\prec(P_{\A_\H})$ are generated by square-free monomials, 
then $\initial_{\prec_{{\w'}}}(\phi_{\A_\H}^{-1}(I))$ is generated by square-free monomials. 
\end{enumerate}
\end{Theorem}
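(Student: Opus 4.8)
The plan is to derive Theorem \ref{main} as a direct consequence of the machinery built up in the preceding subsections, chiefly Proposition \ref{=} and Theorem \ref{if=}. First I would recall that by hypothesis $\initial_\w(I)$ is a monomial ideal, so Proposition \ref{=} (whose proof rested on Lemma \ref{injection} for the inclusion $K[\A_\H]\hookrightarrow S$ and on Lemma \ref{surjection}(2) for the surjection $R^{[\H]}\to K[\A_\H]$) applies verbatim with $\A=\A_\H$ and gives the crucial equality
\[
\initial_{{\w'}}(\phi_{\A_\H}^{-1}(I)) = \phi_{\A_\H}^{-1}(\initial_\w(I)).
\]
This is exactly the additional hypothesis required to invoke Theorem \ref{if=}. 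So the structure of the argument is: Notation \ref{notation} puts us in the situation of the earlier subsections with $\A = \A_\H$; Proposition \ref{=} verifies the equality hypothesis; Theorem \ref{if=} then delivers parts (1) and (2) of Theorem \ref{main} as its own parts (2) and (3) respectively.

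More concretely, I would write: applying Theorem \ref{if=}(2) with $\A = \A_\H$ gives
\[
\delta(\initial_{\prec_{{\w'}}}(\phi_{\A_\H}^{-1}(I))) \le \max\{\delta(\initial_\w(I)), \delta(\initial_\prec(P_{\A_\H}))\},
\]
which is assertion (1). Likewise, if both $\initial_\w(I)$ and $\initial_\prec(P_{\A_\H})$ are generated by square-free monomials, then Theorem \ref{if=}(3) with $\A = \A_\H$ says $\initial_{\prec_{{\w'}}}(\phi_{\A_\H}^{-1}(I))$ is generated by square-free monomials, which is assertion (2). I should remark that the hypothesis ``$\initial_\w(I)$ is a monomial ideal'' is precisely what lets us treat $\initial_\w(I)$ as the monomial ideal fed into Proposition \ref{monomial cases} inside the proof of Theorem \ref{if=}, and that such a $\w$ exists for any term order on $S$ by the propositions at the end of Section 2.

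The only genuine point needing care — and the place I expect the main (mild) obstacle — is checking that the framework of the earlier subsections genuinely specializes to the setting of Notation \ref{notation}: namely that $P_{\A_\H} = \Ker\phi_{\A_\H}$ is $\w'$-homogeneous (so that $\initial_{\w'}$ and $\prec_{\w'}$ behave as in Section 2), that the decomposition $R^{[\H]} \twoheadrightarrow K[\A_\H] \hookrightarrow S$ is one of $\N$-graded maps of degree $0$ with respect to $\w'$, $\w|_{K[\A_\H]}$, $\w$ — all of which were established in the discussion preceding Theorem \ref{if=} and used there — and that $S^{(\H)} = K[\A_\H]$ is Noetherian, which is guaranteed by the finite generation of $\widetilde\H$ assumed in Notation \ref{notation}. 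Once these identifications are in place, no new computation is required: the theorem is a formal corollary. I would therefore keep the proof short, essentially: ``By Proposition \ref{=}, the equality hypothesis of Theorem \ref{if=} holds; now apply Theorem \ref{if=} with $\A = \A_\H$.''
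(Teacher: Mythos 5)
Your proposal is exactly the paper's proof: invoke Proposition \ref{=} to obtain the equality $\initial_{\w'}(\phi_{\A_\H}^{-1}(I))=\phi_{\A_\H}^{-1}(\initial_\w(I))$, and then apply Theorem \ref{if=} (parts (2) and (3)) to get assertions (1) and (2). The paper states this in one line; your additional remarks about homogeneity of $P_{\A_\H}$ and Noetherianity of $K[\A_\H]$ are sound sanity checks but not new content.
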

\begin{proof}
The assertions follows from Theorem \ref{if=} and Proposition \ref{=}. 
\end{proof}
Note that Theorem \ref{main} holds true even if $\A_\H$ is not a configuration. 
\subsection{Pseudo-Gr\"obner bases and Gr\"obner bases of contraction ideals}
We will give a method to construct a pseudo Gr\"obner bases of $\phi_{\A_\H}^{-1}(I)$, 
and investigate when it become a Gr\"obner basis. First, we fix the notation in this subsection. 
\begin{Notation}\label{notation 2}
Let $S=K[{y}_1, \dots, {y}_s]$, $\A_\H$, $R^{[\H]}=K[{x}_1, \dots, {x}_r]$, 
and $\phi_{\A_\H}:R^{[\H]}\to S$ be as in Notation \ref{notation}. 
Let $\prec$ be a term order on $R^{[\H]}$, and $G_{\A_\H}$ 
the reduced Gr\"obner basis of $P_{\A_\H}$ with respect to $\prec$ consisting of binomials. 
Let $I\subset S$ be a $\Z^d$-graded ideal, and fix a weight vector $\w\in \N^s$ on $S$ such that $\initial_\w(I)$ is a monomial ideal. 
We denote by $\w'$ the weight vector $\phi_{\A_\H}^{*}\w=(\deg_\w\phi_{\A_\H}({x}_1), \dots, \deg_\w\phi_{\A_\H}({x}_r))$. 
\end{Notation}
\begin{Definition}\label{lift}
By Remark \ref{monomial basis}, 
for $0\neq q\in K[{\A_\H}]$, there is the unique polynomial $\tilde{q}\in R^{[\H]}$ 
such that $\phi_{\A_\H}(\tilde{q})=q$ and any term of $\tilde{q}$ is not in $\initial_\prec(P_{\A_\H})$. 
We define $\lift_\prec(q)=\tilde{q}$. 
For a subset $Q\subset K[{\A_\H}]$, we write 
\[
\lift_\prec(Q)=\{\lift_\prec(q)\mid q\in Q\}. 
\]
\end{Definition}
\begin{Remark}\label{subring}
\begin{enumerate}
\item For $q\in K[{\A_\H}]$, take a polynomial $p\in R^{[\H]}$ such that $\phi_{\A_\H}(p)=q$. 
Then $\lift_\prec(q)$ is the remainder of $p$ on division by $G_{\A_\H}$ with respect to $\prec$. 
\item 
If $u\in K[{\A_\H}]$ is a monomial, 
then $\lift_\prec(u)$ is a monomial such that $\deg_\w(u)=\deg_{\w'}(\lift_\prec(u))$  
since the remainder of a monomial on division by a $\w'$-homogeneous binomial ideal is a monomial with the same degree. 
Therefore, if $Q$ is a set of monomials, then so is $\lift_\prec(Q)$. 
\item 
Let $q\in K[{\A_\H}]\subset S$. 
If $\initial_\w(q)$ is a monomial, 
then $\initial_{{\w'}}(\lift_\prec(q))$ is also a monomial and $\deg_{\w'}(\lift_\prec(q))=\deg_\w(q)$ by (2). 
Furthermore, 
$\initial_{{\w'}}(\lift_\prec(q))=\lift_\prec(\initial_\w(q))$,  
and 
$\phi_{\A_\H}(\initial_{{\w'}}(\lift_\prec(q)))=\initial_\w(q)$. 
\item Since $R^{[\H]}/\Ker\phi_{\A_\H}\cong K[{\A_\H}]$ as $\N$-graded rings, 
for an ideal $J$ of $K[{\A_\H}]$ with a system of generators $Q$, we have 
$\phi_{\A_\H}^{-1}(J)=\langle \lift_\prec(Q) \rangle+\Ker \phi_{\A_\H}$. 
\end{enumerate}
\end{Remark}
\begin{Proposition}\label{lift of pGB}
Let $J$ be an ideal in $K[{\A_\H}]$ with a pseudo-Gr\"obner basis $Q=\{q_1, \dots, q_\ell\}$ 
(in the sense of Definition \ref{pGB} with a graded ring structure given by $\w$). 
Then $\lift_\prec(F)\cup G_{\A_\H}$ is a pseudo-Gr\"obner basis of $\phi_{\A_\H}^{-1}(J)$ with respect to ${\w'}$. 
\end{Proposition}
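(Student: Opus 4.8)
The plan is to check the defining property of a pseudo-Gr\"obner basis directly, working throughout with the $\N$-gradings on $R^{[\H]}$ and $K[\A_\H]$ induced respectively by $\w'$ and $\w$ (recall $\initial_{K[\A_\H]}(\cdot)=\initial_\w(\cdot)$ on $K[\A_\H]$). Write $\phi=\phi_{\A_\H}$ for the surjection $R^{[\H]}\to K[\A_\H]$. There are two things to verify: that $\lift_\prec(Q)\cup G_{\A_\H}$ is contained in and generates $\phi^{-1}(J)$, and that the $\w'$-initial forms of these elements generate $\initial_{\w'}(\phi^{-1}(J))$. Containment is immediate, since $G_{\A_\H}\subseteq P_{\A_\H}=\Ker\phi\subseteq\phi^{-1}(J)$ and $\phi(\lift_\prec(q_i))=q_i\in J$; that these elements generate $\phi^{-1}(J)$ is exactly Remark \ref{subring}(4) applied to $J=\langle Q\rangle$.

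For the initial ideal, I would first apply Lemma \ref{surjection}(2) to the surjective degree-$0$ homomorphism $\phi$, obtaining
\[
\initial_{\w'}(\phi^{-1}(J))=\phi^{-1}(\initial_{K[\A_\H]}(J))=\phi^{-1}(\initial_\w(J)).
\]
Since $Q$ is a pseudo-Gr\"obner basis of $J$, $\initial_\w(J)=\langle\initial_\w(q_1),\dots,\initial_\w(q_\ell)\rangle$ as an ideal of $K[\A_\H]$, so a second use of Remark \ref{subring}(4) gives
\[
\initial_{\w'}(\phi^{-1}(J))=\bigl\langle\lift_\prec(\initial_\w(q_1)),\dots,\lift_\prec(\initial_\w(q_\ell))\bigr\rangle+P_{\A_\H}.
\]

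The crux is to replace each $\lift_\prec(\initial_\w(q_i))$ by $\initial_{\w'}(\lift_\prec(q_i))$; this is the content of Remark \ref{subring}(3), but there $\initial_\w(q_i)$ is assumed to be a monomial, whereas for a pseudo-Gr\"obner basis in the sense of Definition \ref{pGB} it need not be. I would reprove the identity in the generality required, which costs nothing extra: $\lift_\prec$ is $K$-linear, being reduction modulo the Gr\"obner basis $G_{\A_\H}$ (Remark \ref{subring}(1)); moreover it is a \emph{graded} section of the graded isomorphism $R^{[\H]}/P_{\A_\H}\cong K[\A_\H]$, because the standard monomials—those not lying in the $\w'$-homogeneous monomial ideal $\initial_\prec(P_{\A_\H})$—span a $\w'$-graded subspace of $R^{[\H]}$. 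Hence $\lift_\prec$ sends a nonzero $\w$-homogeneous element of degree $e$ to a nonzero $\w'$-homogeneous element of degree $e$. Decomposing $q_i=\initial_\w(q_i)+(\text{terms of strictly smaller }\w\text{-weight})$ and applying $\lift_\prec$ term-by-weight therefore exhibits $\lift_\prec(\initial_\w(q_i))$ as the top $\w'$-graded component of $\lift_\prec(q_i)$, i.e. $\initial_{\w'}(\lift_\prec(q_i))=\lift_\prec(\initial_\w(q_i))$. Finally, since $G_{\A_\H}$ consists of $\w'$-homogeneous binomials one has $\initial_{\w'}(g)=g$ for $g\in G_{\A_\H}$, so $P_{\A_\H}=\langle G_{\A_\H}\rangle=\langle\initial_{\w'}(g)\mid g\in G_{\A_\H}\rangle$; substituting the last two facts into the previous display yields
\[
\initial_{\w'}(\phi^{-1}(J))=\bigl\langle\initial_{\w'}(h)\mid h\in\lift_\prec(Q)\cup G_{\A_\H}\bigr\rangle,
\]
which is precisely the assertion. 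The only genuinely delicate point is the homogeneity tracking in the generalized form of Remark \ref{subring}(3); the rest is bookkeeping between the $\w$- and $\w'$-gradings.
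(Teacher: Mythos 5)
Your proof is correct and is essentially the unpacking the paper sketches when it says the proposition ``easily follows from the above remarks'': you apply Lemma \ref{surjection}(2) to pass the $\w'$-initial through $\phi$, then use Remark \ref{subring}(4) for both $J$ and $\initial_\w(J)$, and finish by identifying $\lift_\prec(\initial_\w(q_i))$ with $\initial_{\w'}(\lift_\prec(q_i))$. You are right to flag that Remark \ref{subring}(3) as stated assumes $\initial_\w(q)$ is a monomial, which need not hold for a pseudo-Gr\"obner basis; your argument that $\lift_\prec$ is a $K$-linear graded section (since $\initial_\prec(P_{\A_\H})$ is $\w'$-homogeneous, so normal forms of $\w'$-homogeneous elements stay $\w'$-homogeneous of the same weight, and $\phi$ restricted to the span of standard monomials is injective) correctly extends the remark to arbitrary $\w$-initial forms, and that is exactly the point the paper leaves implicit.
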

\begin{proof}
This easily follows from the above remarks. 
\end{proof}
Combining Proposition \ref{lift of pGB} and Lemma \ref{injection}, 
we can obtain a pseudo-Gr\"obner basis of $\phi_{\A_\H}^{-1}(I)$. 
\begin{Definition}
For a finite set $F=\{f_1, \dots, f_\ell\}\subset S$, $\deg_{\Z^d}f_i={\vv}_i$, we define 
\[
\Lift(F):=\lift_\prec(\{\y^\a\cdot f_i\mid i\in [\ell], ~~\y^\a \in \Gamma_\H({{\vv}}_i)\}) 
\]
\end{Definition}
\begin{Proposition}\label{lifted pGB}
Let the notation be as in Notation \ref{notation 2}. 
Let $F=\{f_1, \dots, f_\ell\}$ a pseudo-Gr\"obner basis of $I$ with respect to $\w$ consisting of $\Z^d$-homogeneous polynomials. 
Then the union $\Lift(F)\cup G_{\A_\H}$ 
is a pseudo-Gr\"obner basis of $\phi_{\A_\H}^{-1}(I)$ with respect to ${\w'}$. 
\end{Proposition}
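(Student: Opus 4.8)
The plan is to deduce the statement by factoring $\phi_{\A_\H}$ as the surjection $R^{[\H]}\twoheadrightarrow K[\A_\H]$ followed by the inclusion $K[\A_\H]\hookrightarrow S$, and then to chain together the two preceding results: Lemma~\ref{injection}(3), which handles the inclusion $K[\A_\H]\hookrightarrow S$ by contracting a pseudo-Gr\"obner basis of $I$ to one of $I\cap K[\A_\H]$, and Proposition~\ref{lift of pGB}, which handles the surjection by lifting a pseudo-Gr\"obner basis of an ideal of $K[\A_\H]$ to one of its contraction. Throughout, $K[\A_\H]$ is regarded as an $\N$-graded ring via $\w$, for which $\initial_{K[\A_\H]}(f)=\initial_\w(f)$ for $f\in K[\A_\H]$.

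First I would put $J:=I\cap K[\A_\H]$ and $Q:=\{\y^\a\cdot f_i\mid i\in[\ell],\ \y^\a\in\Gamma_\H(\vv_i)\}$, where $\vv_i=\deg_{\Z^d}(f_i)$. Each $\Gamma_\H(\vv_i)$ is a finite set of monomials: since $f_i\neq 0$ we have $S_{\vv_i}\neq 0$, so $C_\H(\vv_i)$ is isomorphic up to a shift of grading to an ideal of the Noetherian ring $K[\A_\H]$, hence finitely generated; thus $Q$ is a finite subset of $K[\A_\H]$. Because $F$ is a $\Z^d$-homogeneous pseudo-Gr\"obner basis of $I$ with respect to $\w$ (and in particular a $\Z^d$-homogeneous system of generators of $I$), Lemma~\ref{injection}(3) applies and yields that $Q$ is a pseudo-Gr\"obner basis of $J$ in the sense of Definition~\ref{pGB}.

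Next I would apply Proposition~\ref{lift of pGB} to the ideal $J\subset K[\A_\H]$ with its pseudo-Gr\"obner basis $Q$: this gives that $\lift_\prec(Q)\cup G_{\A_\H}$ is a pseudo-Gr\"obner basis of $\phi_{\A_\H}^{-1}(J)$ with respect to $\w'$. It then remains only to identify the two pieces of data. By the very definition of $\Lift$ one has $\lift_\prec(Q)=\Lift(F)$; and since $\phi_{\A_\H}^{-1}(J'\cap K[\A_\H])=\phi_{\A_\H}^{-1}(J')$ for every ideal $J'\subset S$ (as already used in the proof of Proposition~\ref{=}, because $\phi_{\A_\H}$ factors through $K[\A_\H]$), one has $\phi_{\A_\H}^{-1}(J)=\phi_{\A_\H}^{-1}(I\cap K[\A_\H])=\phi_{\A_\H}^{-1}(I)$. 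Hence $\Lift(F)\cup G_{\A_\H}$ is a pseudo-Gr\"obner basis of $\phi_{\A_\H}^{-1}(I)$ with respect to $\w'$, which is exactly the assertion.

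Since the argument is merely the composition of two already-established results, there is no genuinely hard step. The only points deserving a little care are: (i) checking that the flavor of pseudo-Gr\"obner basis produced by Lemma~\ref{injection}(3) — namely the $\N$-graded one of Definition~\ref{pGB}, relative to the $\w$-grading on $K[\A_\H]$ — is precisely the input hypothesis required by Proposition~\ref{lift of pGB}; and (ii) confirming, as above, that $Q$ is a finite set, so that $\Lift(F)\cup G_{\A_\H}$ is finite, as is required of a pseudo-Gr\"obner basis.
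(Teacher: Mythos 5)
Your proof is correct and is exactly the argument the paper has in mind: the paper states Proposition~\ref{lifted pGB} without a written proof, having just remarked ``Combining Proposition~\ref{lift of pGB} and Lemma~\ref{injection}, we can obtain a pseudo-Gr\"obner basis of $\phi_{\A_\H}^{-1}(I)$'', and your writeup spells out precisely that chain — contract $F$ to a pseudo-Gr\"obner basis of $I\cap K[\A_\H]$ via Lemma~\ref{injection}(3), lift it to $R^{[\H]}$ via Proposition~\ref{lift of pGB}, and identify $\phi_{\A_\H}^{-1}(I\cap K[\A_\H])$ with $\phi_{\A_\H}^{-1}(I)$.
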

Remark that $G_{\A_\H}\cup \Lift(F)$ is not always a Gr\"obner basis even if $I$ is a principal monomial ideal. 
\begin{Example}\label{pGB is not GB}
Let $S=K[y_1,y_2,y_3]$ be an $\N$-graded ring with $\deg(y_1)=\deg(y_2)=\deg(y_3)=1$. 
Let $\H=\{2 n\mid n\in \N\}\subset \N$. Then $\A_\H=\{{}^t(2,0),{}^t(1,1),{}^t(0,2)\}$, 
$R^{[\H]}=K[x_1,x_2,x_3]$, and $\phi_{\A_\H}: R^{[\H]}\to S$, $x_1\mapsto y_1^2$, $x_2\mapsto y_1y_2$, $x_3\mapsto y_2^2$. 
Let $\prec$ be the lexicographic order on $R^{[\H]}$ such that $x_1\prec x_2\prec x_3$. 
Then the reduced Gr\"obner basis $G_{\A_\H}$ of $P_{\A_\H}$ is $\{\underline{x_1x_3}-x_2^2\}$. 
Let $I=\langle y_2y_3^3\rangle$ and $F=\{y_2y_3^3\}$. 
Then $\Lift(F)=\{x_2x_3\}$ and $\phi_{\A_\H}^{-1}(I)=\langle x_2x_3, x_1x_3-x_2^2\rangle$. 
Let $\w$ be any weight vector on $S$. 
Since $\phi_{\A_\H}^{-1}(I)$ is a $\w'$-homogeneous ideal, 
$G_{\A_\H}\cup \Lift(F)=\{{x_2x_3}, {x_1x_3}-x_2^2\}$ is pseudo-Gr\"obner basis of $\phi_{\A_\H}^{-1}(I)$ with respect to $\w'$. 
However, $G_{\A_\H}\cup \Lift(F)$ is not a Gr\"obner basis of $\phi_{\A_\H}^{-1}(I)$. 
Recall that $G_{\A_\H}\cup M^{(\A)}_\prec(I)$ is a Gr\"obner basis of $\phi_{\A_\H}^{-1}(I)$ with respect to $\prec_{\w'}$ by Proposition \ref{monomial cases}. 
We have $M^{(\A)}_\prec(I)=\{{x_2x_3}, {x_2^3}\}$, 
and $G_{\A_\H}\cup M^{(\A)}_\prec(I)=\{{x_2x_3}, {x_1x_3}-x_2^2, {x_2^3}\}$ is a Gr\"obner basis of $\phi_{\A_\H}^{-1}(I)$ with respect to $\prec_{\w'}$. 
\end{Example}
In Example \ref{pGB is not GB}, the monomial ${x_2^3}$ is obtained by the S-polynomial $S({x_2x_3}, {x_1x_3}-x_2^2)$, 
and has degree $3$ which is strictly greater than $\deg({x_2x_3})=2$. 
We will give a sufficient condition for the pseudo-Gr\"oner basis constructed in Proposition \ref{lifted pGB}, 
to be a Gr\"obner basis. 
\begin{Proposition}\label{lifted GB}
Let the notation be as in Notation \ref{notation 2}. 
Assume that $\A_\H$ is a configuration. 
Suppose that $F=\{f_1, \dots, f_\ell\}$ is a Gr\"obner basis of $I$ with respect to $\w$. 
Let $L_i=L^{(\A)}_\prec(\initial_\w(f_i))$ and $M_i=M^{(\A)}_\prec(\initial_\w(f_i))$. 
Assume that for each $i$, there exists $\delta_i\in \N$ such that $\deg(u)=\delta_i$ for all $u\in M_i$. 
Then $G_{\A_\H}\cup \Lift(F)$ is a Gr\"obner basis of $\phi_{\A_\H}^{-1}(I)$ with respect to $\prec_{{\w'}}$. 
\end{Proposition}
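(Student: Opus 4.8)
The plan is to verify Buchberger's criterion for the set $G_{\A_\H}\cup\Lift(F)$ with respect to the term order $\prec_{\w'}$. By Proposition \ref{lifted pGB}, this set is already a pseudo-Gr\"obner basis of $\phi_{\A_\H}^{-1}(I)$ with respect to $\w'$, so $\langle\initial_{\w'}(g)\mid g\in G_{\A_\H}\cup\Lift(F)\rangle=\initial_{\w'}(\phi_{\A_\H}^{-1}(I))$; moreover, by Remark \ref{subring}(3), each $\initial_{\w'}(\lift_\prec(\y^\a f_i))=\lift_\prec(\initial_\w(\y^\a f_i))$ is a \emph{monomial} (here we use that $F$ is an honest Gr\"obner basis, so $\initial_\w(f_i)$ and hence $\initial_\w(\y^\a f_i)$ is a monomial), and the elements of $G_{\A_\H}$ have monomial initial forms as well. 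Thus every element of our candidate basis has a monomial $\w'$-initial form, so it is automatically a Gr\"obner basis with respect to $\w'$ in the sense of the paper, and $\initial_{\prec_{\w'}}(\phi_{\A_\H}^{-1}(I))=\initial_\prec(\initial_{\w'}(\phi_{\A_\H}^{-1}(I)))$ by Proposition \ref{w}. So it suffices to show that the monomials $\{\initial_{\prec_{\w'}}(g)\mid g\in G_{\A_\H}\cup\Lift(F)\}$ generate $\initial_{\prec_{\w'}}(\phi_{\A_\H}^{-1}(I))$.

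The cleanest route is to compare with the known Gr\"obner basis from Proposition \ref{monomial cases}: since $\initial_\w(I)=\sum_i\langle\initial_\w(f_i)\rangle$ is a monomial ideal, $G_{\A_\H}\cup M^{(\A)}_\prec(\initial_\w(I))=G_{\A_\H}\cup\bigcup_i M_i$ is a Gr\"obner basis of $\phi_{\A_\H}^{-1}(\initial_\w(I))$ with respect to $\prec$, and by Proposition \ref{=} together with Proposition \ref{w} this ideal equals $\initial_{\prec_{\w'}}(\phi_{\A_\H}^{-1}(I))$. Hence the target initial ideal is $\initial_\prec(P_{\A_\H})+L^{(\A)}_\prec(\initial_\w(I))$, minimally generated (over $\initial_\prec(P_{\A_\H})$) by the monomials in $\bigcup_i M_i$. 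So I must show: for each $i$ and each $u\in M_i$, the monomial $u$ lies in the ideal generated by $\initial_\prec(P_{\A_\H})$ together with the monomials $\initial_{\prec_{\w'}}(\lift_\prec(\y^\a f_j))$, $\y^\a\in\Gamma_\H(\vv_j)$; and conversely each such $\initial_{\prec_{\w'}}(\lift_\prec(\y^\a f_j))$ lies in $\initial_\prec(P_{\A_\H})+L^{(\A)}_\prec(\initial_\w(I))$. The second containment is immediate because $\lift_\prec(\y^\a f_j)\in\phi_{\A_\H}^{-1}(I)$, so its $\prec_{\w'}$-initial term lies in the target ideal. For the first: given $u\in M_i\subset L^{(\A)}_\prec(\initial_\w(f_i))$, we have $\phi_{\A_\H}(u)\in\langle\initial_\w(f_i)\rangle$, so $\phi_{\A_\H}(u)=\y^\a\cdot\initial_\w(f_i)\cdot(\text{monomial})$ for suitable $\a$; choosing $\y^\a\in\Gamma_\H(\vv_i)$ dividing appropriately, $u$ and the monomial $w:=\initial_{\prec_{\w'}}(\lift_\prec(\y^\a f_i))=\lift_\prec(\y^\a\,\initial_\w(f_i))$ satisfy $\phi_{\A_\H}(u)$ divisible by $\phi_{\A_\H}(w)$; since $u\notin\initial_\prec(P_{\A_\H})$ and $w\notin\initial_\prec(P_{\A_\H})$ and $\A_\H$ is a configuration (so $\w'$-degree equals ordinary degree up to the fixed scaling, and divisibility downstairs forces divisibility of the reduced monomials), one shows $w\mid u$ — here the hypothesis that $\deg(u)=\delta_i$ is constant on $M_i$ is exactly what prevents the pathology of Example \ref{pGB is not GB}, where an S-polynomial produced a genuinely larger-degree minimal generator.

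The main obstacle, and the place where the constant-degree hypothesis on $M_i$ and the configuration hypothesis on $\A_\H$ must both be used, is controlling the S-polynomials $S(g,h)$ with $g\in G_{\A_\H}$ and $h\in\Lift(F)$ (the S-polynomials among elements of $G_{\A_\H}$ reduce to zero since $G_{\A_\H}$ is a Gr\"obner basis, and those among elements of $\Lift(F)$ project, under $\phi_{\A_\H}$, to S-polynomial-type relations in $I$ that reduce to zero because $F$ is a Gr\"obner basis, then lift back). For $g=\x^{\b^+}-\x^{\b^-}\in G_{\A_\H}$ with $\initial_\prec(g)=\x^{\b^+}$ and $h\in\Lift(F)$ with $\initial_{\prec_{\w'}}(h)=w$, the reduction of $S(g,h)$ must not create a monomial outside $\initial_\prec(P_{\A_\H})+L^{(\A)}_\prec(\initial_\w(I))$; the danger is precisely that cancelling $\x^{\b^+}$ against $w$ produces, after reducing $\x^{\b^-}\cdot(w/\gcd)$ modulo $G_{\A_\H}$, a monomial of strictly larger degree than $\delta_i$ that is forced to be a new minimal generator. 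The configuration property guarantees all these manipulations are $\w'$-homogeneous of the expected degree, and the hypothesis $\deg(u)=\delta_i$ for all $u\in M_i$ guarantees that $L^{(\A)}_\prec(\initial_\w(f_i))$ is generated in the single degree $\delta_i$, so that any monomial in $\phi_{\A_\H}^{-1}(\langle\initial_\w(f_i)\rangle)\setminus\initial_\prec(P_{\A_\H})$ of degree $\ge\delta_i$ is divisible by some element of $M_i$ — which is what forces the reduced S-polynomial to lie in the claimed ideal. I would carry this out by a direct S-polynomial computation, writing $\lcm(\x^{\b^+},w)/\x^{\b^+}$ and $\lcm(\x^{\b^+},w)/w$ explicitly, applying $\phi_{\A_\H}$ to see the image lands in $\langle\initial_\w(f_i)\rangle$, and then using the constant-degree property to conclude that the $\prec_{\w'}$-remainder is a $K$-combination of monomials each divisible by a member of $G_{\A_\H}\cup\bigcup_j M_j$, hence reduces to $0$.
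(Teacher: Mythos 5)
Your plan of showing directly that $\{\initial_{\prec_{\w'}}(g)\mid g\in G_{\A_\H}\cup\Lift(F)\}$ generates $\initial_{\prec_{\w'}}(\phi_{\A_\H}^{-1}(I))$ by comparing against the known generating set $\initial_\prec(P_{\A_\H})\cup\bigcup_iM_i$ coming from Proposition \ref{monomial cases} is a legitimate strategy, and it is genuinely different from the paper's route: the paper first reduces to the case where $I$ is a monomial ideal (so that $\Lift(F)$ consists only of monomials) and then runs Buchberger's criterion, leaning on Lemma \ref{hGB criterion}, which is a refinement of Buchberger for homogeneous ideals stating that a homogeneous generating set that is not a Gr\"obner basis must have some reduced $S$-polynomial whose initial is a \emph{minimal} generator of the initial ideal. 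That lemma is what the paper's degree count is ultimately played against, and your proposal never invokes it.

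The concrete gap in your argument is at the crucial step ``one shows $w\mid u$.'' The parenthetical justification you give --- that $\A_\H$ being a configuration means ``divisibility downstairs forces divisibility of the reduced monomials'' --- is false, and the paper's own Example \ref{pGB is not GB} is a counterexample: there $x_2x_3$ and $x_2^3$ are both reduced modulo $\initial_\prec(P_{\A_\H})=\langle x_1x_3\rangle$, the image $\phi(x_2x_3)$ divides $\phi(x_2^3)$, yet $x_2x_3\nmid x_2^3$. What actually makes the step work is the degree count you gesture at but do not carry out: writing $\phi_{\A_\H}(u)=\y^\e\cdot\initial_\w(f_i)\cdot m$ with $\y^\e\in\Gamma_\H(\vv_i)$ and $m\in K[\A_\H]$ a monomial, one puts $w=\lift_\prec(\y^\e\initial_\w(f_i))$; then $w\in L_i$, so by the hypothesis that every element of $M_i$ has degree exactly $\delta_i$ one gets $\deg w\ge\delta_i=\deg u$; on the other hand, because $\A_\H$ is a configuration (pick $\lambda$ with $\lambda\cdot\a^{(j)}=1$ for all $j$), the $\lambda$-weight of $\phi_{\A_\H}$ of a monomial equals its ordinary degree, so from $\phi_{\A_\H}(u)=\phi_{\A_\H}(w)\cdot m$ one gets $\deg u=\deg w+\lambda\text{-wt}(m)$ with $\lambda\text{-wt}(m)\ge 0$, hence $\deg w\le\deg u$. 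Forcing equality gives $m=1$, so $\phi_{\A_\H}(u)=\phi_{\A_\H}(w)$ and, since both are reduced, $u=w$. Without spelling this out, the claim is unsupported; and your fallback $S$-polynomial sketch at the end is both vague (you describe reducing $S$-polynomials among elements of $\Lift(F)$ by ``projecting and lifting back'' rather than first reducing to the monomial case, which is the simplification the paper makes) and missing the closing move via Lemma \ref{hGB criterion}, so it does not repair the gap either.
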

\begin{proof}
Note that $P_{\A_\H}$ is a homogeneous ideal as $\A_\H$ is a configuration. 
As $G_{\A_\H}\cup \Lift(F)$ is a pseudo-Gr\"obner basis of $\phi_{\A_\H}^{-1}(I)$ and 
$G_{\A_\H}$ consists of ${\w'}$-homogeneous polynomials, 
the initial ideal $\initial_{{\w'}}(\phi_{\A_\H}^{-1}(I))$ is generated by 
$G_{\A_\H}\cup \{\initial_{{\w'}}(g)\mid g\in \Lift(F)\}$. 
Since 
\[
\initial_{\prec_{{\w'}}}(\phi_{\A_\H}^{-1}(I))
=\initial_{\prec}(\initial_{{\w'}}(\phi_{\A_\H}^{-1}(I))) 
\]
$G_{\A_\H}\cup \Lift(F)$ is a Gr\"obner basis of $\phi_{\A_\H}^{-1}(I)$ 
if and only if $G_{\A_\H}\cup \{\initial_{{\w'}}(g)\mid g\in \Lift(F)\}$ is a Gr\"obner basis of 
$\phi_{\A_\H}^{-1}(\initial_\w(I))=\initial_{\w'}(\phi_{\A_\H}^{-1}(I))$. 
By Remark \ref{subring} (3), it follows that  
\[
\{\initial_{{\w'}}(g)\mid g\in \Lift(F)\}=\Lift(\{\initial_\w(f)\mid f\in F\}). 
\]
Thus it is enough to show that 
\[
G_{\A_\H}\cup \Lift(\{\initial_\w(f)\mid f\in F\}) 
\]
is a Gr\"obner basis of $\phi_{\A_\H}^{-1}(\initial_\w(I))$ with respect to $\prec_{{\w'}}$. 
Since $\{\initial_\w(f)\mid f\in F\}$ is a system of generators of the monomial ideal $\initial_\w(I)$, 
it is enough to prove this theorem of $\initial_\w(I)$. 
Thus we may, and do assume that $I$ is a monomial ideal and 
$F=\{f_1, \dots, f_\ell\}$ is the minimal system of monomial generators of $I$. 
Then $G_{\A_\H}\cup(\bigcup_{i=1}^\ell M_i)$ is a Gr\"obner basis of $\phi_{\A_\H}^{-1}(I)$ 
by Proposition \ref{monomial cases}. 

Note that $\Lift(F)$ is a set of monomials, and $G_{\A_\H}\cup \Lift(F)$ is a system of generators of $\phi_{\A_\H}^{-1}(I)$. 
We will prove that $G_{\A_\H}\cup \Lift(F)$ is a Gr\"obner basis of $\phi_{\A_\H}^{-1}(I)$ with respect to $\prec$ using 
Buchberger's criterion. 
It is enough to show that the remainder of the S-polynomial  $S(u, g)$ on division by $G_{\A_\H}\cup \Lift(F)$ is zero for all 
$u\in \Lift(F)$ and $g\in G_{\A_\H}$. 
Let $u\in \Lift(F)$. 
Then $u\in L_i$ for some $i$, and thus $\deg(u)\ge \delta_i$. 
For any $g\in G_{\A_\H}$, as $u\not\in \initial_\prec(P_{\A_\H})$, 
it follows that $u\neq \initial_\prec(g)$ and thus 
the degree of the S-polynomial $S(u, g)$ is strictly greater than $\delta_i$ . 
Let $u'$ be a remainder of $S(u, g)$ on division by $G_{\A_\H}\cup \Lift(F)$. 
Since $G_{\A_\H}\cup \Lift(F)$ is a set of homogeneous binomials and monomials, 
$u'$ is zero or a monomial in $L_i$ of degree $\deg (S(u, g))>\delta_i$. 
Hence $\initial_\prec(u')=u'$ is zero or 
not a member of the minimal system of monomial generators of $\initial_{\w'}(\phi_{\A_\H}^{-1}(I))$. 
If $u'\neq 0$ for some $u\in \Lift(F)$, this contradicts to the next lemma. 
\end{proof}
\begin{Lemma}\label{hGB criterion}
Let $I\subset K[{x}_1, \dots, {x}_r]$ be a homogeneous ideal with a homogeneous system of generators $G=\{g_1, \dots, g_\ell\}$. 
If $G$ is not a  a Gr\"obner basis of $I$, then there exist $1\le i<j \le \ell$ such that 
the initial of $\overline{S(g_i, g_j)}^G$ is a member of the minimal system of monomial generators of 
$\initial_\prec(I)$. 
\end{Lemma}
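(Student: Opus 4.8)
The plan is to prove the contrapositive restated positively: if $G=\{g_1,\dots,g_\ell\}$ generates $I$ but $\langle\initial_\prec(g_1),\dots,\initial_\prec(g_\ell)\rangle\subsetneq\initial_\prec(I)$, then some S-polynomial remainder $\overline{S(g_i,g_j)}^G$ has initial term lying in the \emph{minimal} generating set of $\initial_\prec(I)$. First I would record that by Buchberger's Criterion, since $G$ is not a Gr\"obner basis, there exist indices $i<j$ with $\overline{S(g_i,g_j)}^G\neq 0$; call such a nonzero remainder $h$. Every term of $h$ is by construction not divisible by any $\initial_\prec(g_k)$, yet $h\in I$, so $\initial_\prec(h)\in\initial_\prec(I)\setminus\langle\initial_\prec(g_1),\dots,\initial_\prec(g_\ell)\rangle$. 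This already shows the strict inclusion is witnessed, but I still need the stronger conclusion that for \emph{some} choice of $i<j$ the initial term of the remainder is a \emph{minimal} generator of $\initial_\prec(I)$.

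The key idea is to choose the S-pair extremally. Consider the set of all monomials $m$ that arise as $\initial_\prec\bigl(\overline{S(g_i,g_j)}^G\bigr)$ over all pairs $i<j$ with nonzero remainder; this set is nonempty and every such $m$ lies in $\initial_\prec(I)$ but not in $N:=\langle\initial_\prec(g_1),\dots,\initial_\prec(g_\ell)\rangle$. Pick $i<j$ so that the resulting monomial $m_0=\initial_\prec\bigl(\overline{S(g_i,g_j)}^G\bigr)$ is \emph{minimal} with respect to divisibility among all monomials in this set (possible since divisibility is a well-founded partial order on monomials, or equivalently pick $m_0$ of least $\prec$-value or least degree and then minimal under divisibility). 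I claim $m_0$ is a minimal generator of $\initial_\prec(I)$. Suppose not: then $m_0=m'\cdot m''$ with $m'$ a proper divisor of $m_0$ lying in $\initial_\prec(I)$, and we may take $m'$ itself to be a minimal generator of $\initial_\prec(I)$. Since $m'\in\initial_\prec(I)$, there is $f\in I$ with $\initial_\prec(f)=m'$; reducing $f$ modulo $G$ gives a remainder $\tilde f$ with $\tilde f\in I$ and no term divisible by any $\initial_\prec(g_k)$. If $\tilde f=0$ then $m'=\initial_\prec(f)$ would be divisible by some $\initial_\prec(g_k)$, i.e. $m'\in N$, contradicting $m'\in\initial_\prec(I)\setminus N$ (a minimal generator of $\initial_\prec(I)$ cannot lie in $N$, else it would be divisible by a strictly smaller generator, impossible). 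So $\tilde f\neq 0$; but $\tilde f$ is a fully reduced element of $I$ whose initial term is some monomial strictly dividing... here is where I must be careful: $\initial_\prec(\tilde f)$ need not equal $m'$.

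The main obstacle is precisely this last gap: producing, from a minimal generator $m'$ of $\initial_\prec(I)$ that properly divides $m_0$, an \emph{S-polynomial remainder} (not just an arbitrary reduced element of $I$) whose initial term equals $m'$ or a proper divisor of $m_0$ — contradicting minimality of $m_0$. To close it I would argue as follows: take any $p\in I$ with $\initial_\prec(p)$ equal to a minimal generator of $\initial_\prec(I)$ that is $\preceq m_0$ and divides $m_0$ properly (such exists, e.g. $m'$). Among all representations expressing such $p$ via $G$, and all the finitely many monomials below $m_0$ in the divisibility order, use a standard \emph{minimal counterexample / lowest-initial-term} descent: if $p$ reduces to $0$ mod $G$ then its initial term lies in $N$, contradiction; if it reduces to a nonzero $\tilde p$, then $\tilde p$'s leading monomial is a new element of $\initial_\prec(I)\setminus N$, and one shows by the classical Buchberger argument (rewriting $\sum c_k g_k$ with cancellation of leading terms into a combination of S-polynomials of \emph{lower} leading monomial) that $\initial_\prec(\tilde p)$ is dominated by $\initial_\prec\bigl(\overline{S(g_i,g_j)}^G\bigr)$ for some pair, hence $\preceq m_0$, and by the minimality choice of $m_0$ under divisibility we get that $\initial_\prec(\tilde p)$ is not a proper divisor of $m_0$ — forcing it to be $m_0$ itself and $\tilde p$ to exhibit $m_0$ as minimal after all, or else yielding the contradiction directly. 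Cleanly, the cleanest route is: let $m_0$ be the $\prec$-\emph{smallest} monomial occurring as the initial term of any nonzero $S$-polynomial remainder; run the Buchberger syzygy rewriting to show any reduced nonzero element of $I$ has initial term $\succeq m_0$; deduce no minimal generator of $\initial_\prec(I)$ is $\prec m_0$; conclude $m_0$, having no proper divisor in $\initial_\prec(I)$ that is itself a minimal generator below it, must be a minimal generator of $\initial_\prec(I)$. I would present this descent carefully, as it is the only non-formal step; everything else is bookkeeping with the definitions of remainder, S-polynomial, and minimal monomial generators.
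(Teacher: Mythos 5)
Your proposal does not close the argument, and the gap you flag yourself is in fact the whole content of the lemma. Two concrete problems.

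First, your approach never invokes the homogeneity hypothesis, whereas the paper's proof uses it essentially: because $I$ and $G$ are homogeneous, $S$-polynomials and their remainders are homogeneous of degree $\ge\max\{\deg g_i,\deg g_j\}$, and Buchberger's algorithm can be run degree-by-degree, so the \emph{degree} of every new Gröbner basis element dominates the degree of some $\overline{S(g_i,g_j)}^G$. The paper then picks $\x^{\b^{(1)}}$ of minimal \emph{degree} among minimal generators of $\initial_\prec(I)$ outside $N:=\langle\initial_\prec(g)\mid g\in G\rangle$, traces it to an $S$-pair of $G$, and exhibits a smaller-degree minimal generator outside $N$ — contradiction. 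That degree bookkeeping has no analogue in your $\prec$-order/divisibility version, because $\prec$ need not be degree-compatible and Buchberger's iteration can produce elements with arbitrarily small leading terms in $\prec$ relative to the first-round $S$-poly remainders.

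Second, the step you postpone — \emph{``run the Buchberger syzygy rewriting to show any reduced nonzero element of $I$ has initial term $\succeq m_0$''} — is not a routine bookkeeping step; as stated it is doubtful. A single round of the rewriting replaces a representation $\tilde f=\sum c_k g_k$ with one involving $\overline{S(g_i,g_j)}^G$ terms, but those remainders are again elements of $I$ and must themselves be re-expressed, so the induction does not terminate at the first-round remainders; you would need to iterate Buchberger and compare $\initial_\prec$ of \emph{iterated} $S$-poly remainders against $m_0$, which is exactly what you do not control. Your subsidiary claim that no minimal generator of $\initial_\prec(I)$ is $\prec m_0$ is also wrong as written: the minimal generators lying in $N$ (i.e.\ the $\initial_\prec(g_k)$ themselves) can be $\prec m_0$. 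In short, the extremal choice should be made by degree (using homogeneity), as the paper does, not by $\prec$ or divisibility; with your choice the descent does not close.
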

\begin{proof}
First, note that if $\overline{S(g_i, g_j)}^G\neq 0$, 
then the degree of $\overline{S(g_i, g_j)}^G$ is not less than the degrees of $g_i$ and $g_j$. 
Assume, to the contrary, the initial of $\overline{S(g_i, g_j)}^G$ is zero 
or not a member of the minimal system of monomial generators of $\initial_\prec(I)$ for all $1\le i<j\le \ell$. 
Let $F=\{\x^{\b^{(1)}}, \dots, \x^{\b^{(m)}}\}$ be the minimal system of monomial generators of $\initial_\prec(I)$. 
Assume that $G$ is not a Gr\"obner basis. 
We may assume that $\x^{\b^{(1)}}$ is the monomial of minimal degree among monomials in $F$ which are not in 
$ \langle \initial_\prec(g)\mid g\in G \rangle$. 
Let $G'\subset I$ be a finite subset of $I$ 
such that $G\cup G'$ is a minimal Gr\"obner basis of $I$ computed from $G$ by Buchberger's algorithm. 
Then there exists $h\in G'\backslash G$ such that $\x^{\b^{(1)}}=\initial_\prec(h)$. 
By the procedure of Buchberger's algorithm, 
$\deg(\x^{\b^{(1)}})=\deg h\ge \deg \overline{S(g_i, g_j)}^G$ 
for some $1\le i<j\le \ell$ with $\overline{S(g_i, g_j)}^G\neq 0$. 
Let $\x^{\b^{(k)}}\in F$ such that $\x^{\b^{(k)}}$ divides the initial of $\overline{S(g_i, g_j)}^G$. 
By the assumption, the initial of $\overline{S(g_i, g_j)}^G$ does not coincide with $\x^{\b^{(k)}}$, 
and thus the degree of $\x^{\b^{(k)}}$ is strictly less than $\overline{S(g_i, g_j)}^G$. 
Therefore $\deg(\x^{\b^{(1)}})>\deg(\x^{\b^{(k)}})$. 
Since any term of $\overline{S(g_i, g_j)}^G$ is not in $\langle \initial_\prec(g)\mid g\in G \rangle$, 
we have $\x^{\b^{(k)}}\not\in \langle \initial_\prec(g)\mid g\in G \rangle$ . 
This is a contradiction. 
\end{proof}
\section{Applications}
We will present some applications of  Theorem \ref{main}. 
In all examples presented in this section, the grading of the polynomial ring of $S$ satisfies the condition of Proposition \ref{noetherian condition} (2).  
\subsection{Veronese configurations}
Let $S=K[{y}_1, \dots, {y}_s]=\bigoplus_{i\in \N}S_i$ be an $\N$-graded ring with $\deg({y}_i)=1$ for all $i$. 
Let ${d}$ be a positive integer, and 
\[
{\A_d}=\{\a={}^t(a_1, \dots, a_s)\in \N^s\mid \abs{\a}={d}\}
\]
the Veronese configuration. 
Then $S^{(\N\cdot d)}=K[\A_d]$ is the $d$-the Veronese subring of $S$. 
Let $R=K[{x}_\a\mid \a\in {\A_d}]$ be a polynomial ring, and $\phi_{\A_d}: R\to S$ (${x}_\a\mapsto \y^\a$) the monomial homomorphism. 
It is known that there exist a lexicographic order on $R$ such that 
$\initial_\prec(P_{\A_d})$ is generated by square-free monomial of degree two (\cite{Emanuela}). 
\begin{Theorem}
Let $I\subset S$ be a homogeneous ideal, $\w$ a weight vector on $S$ such that $\initial_\w(I)$ is a monomial ideal, 
and $\prec$ a term order on $R$ such that $\initial_\prec(P_{\A_d})$ is generated by square-free monomial of degree two. 
We denote the weight vector $\phi_{\A_d}^{*}\w$ by $\w'$. 
Then the following hold: 
\begin{enumerate}
\item 
$\delta(\initial_{\prec_{{\w'}}}(\phi_{\A_d}^{-1}(I))
\le \max\{2, \delta(\initial_{\w}(I))\}$. 
\item If $\initial_{\w}(I)$ is generated by square-free monomials, 
then $\initial_{\prec_{\w'}}(\phi_{\A_d}^{-1}(I))$ is generated by square-free monomials. 
\end{enumerate}
\end{Theorem}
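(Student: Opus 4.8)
The plan is to read the statement as the instance of Theorem \ref{main} in which $\H$ is the subsemigroup $\N d=\{0,d,2d,\dots\}$ of the grading semigroup $\N$ of $S$. To fit the hypotheses of Notation \ref{notation}, I would first grade $S$ by $\Z^1=\Z$ via $\V=(1,\dots,1)$ (so the integer playing the role of ``$d$'' in Notation \ref{notation} is $1$, unrelated to the Veronese index $d$), so that $\N\V=\N$, $\deg_\Z(y_i)=1$, and $\H=\N d$ is a finitely generated --- indeed normal, being $d\Z\cap\R_{\ge 0}$ --- subsemigroup of $\N\V$, with $\widetilde{\H}=\{\a\in\N^s\mid \abs{\a}\in\N d\}$. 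Next I would check that a minimal system of generators of $\widetilde{\H}$ is exactly the Veronese configuration $\A_d=\{\a\in\N^s\mid\abs{\a}=d\}$: given $\a\in\widetilde{\H}$ with $\abs{\a}=kd$ and $k\ge 1$, one can choose $\a'\in\A_d$ with $\a'\le\a$ componentwise, so $\a-\a'\in\widetilde{\H}$ with $\abs{\a-\a'}=(k-1)d$, and induction on $k$ shows $\A_d$ generates $\widetilde{\H}$; conversely each element of $\A_d$ is indecomposable in $\widetilde{\H}$ since any nontrivial decomposition would give a summand of weight $<d$. Hence $\A_\H=\A_d$, $R^{[\H]}=R$, $\phi_{\A_\H}=\phi_{\A_d}$, and $S^{(\H)}=K[\A_d]$ is the $d$-th Veronese subring of $S$.

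Once this identification is in place, the result follows by feeding the given $\Z$-graded (i.e.\ ordinarily homogeneous) ideal $I$, the term order $\prec$, and the weight vector $\w$ (so that $\w'=\phi_{\A_d}^{*}\w$ in the notation of Theorem \ref{main}) into that theorem. Part (1) of Theorem \ref{main} gives
\[
\delta(\initial_{\prec_{\w'}}(\phi_{\A_d}^{-1}(I)))\le\max\{\delta(\initial_\w(I)),\ \delta(\initial_\prec(P_{\A_d}))\}.
\]
By the hypothesis on $\prec$ (whose realizability is guaranteed by \cite{Emanuela}), $\initial_\prec(P_{\A_d})$ is generated by square-free monomials of degree two, so $\delta(\initial_\prec(P_{\A_d}))\le 2$ and the right-hand side is $\le\max\{2,\delta(\initial_\w(I))\}$; this is (1). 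For (2), both $\initial_\w(I)$ (by hypothesis) and $\initial_\prec(P_{\A_d})$ (by the choice of $\prec$) are generated by square-free monomials, so Part (2) of Theorem \ref{main} yields that $\initial_{\prec_{\w'}}(\phi_{\A_d}^{-1}(I))$ is generated by square-free monomials.

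The only point that is not a pure formality --- and which I would flag as the (mild) main obstacle --- is the bookkeeping above: confirming that the Veronese configuration coincides with the canonical generating set $\A_\H$ of Notation \ref{notation}, and that $S^{(\N d)}$ is Noetherian so that Notation \ref{notation} applies. The first is the elementary domination/induction argument just sketched; the second is immediate from Proposition \ref{noetherian condition}(2), whose hypothesis holds trivially since all the $\vv_i$ equal the single vector $1$. After that, everything is a direct substitution into Theorem \ref{main} with no further computation.
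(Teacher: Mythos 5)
Your proposal is correct and takes essentially the same route as the paper, which simply invokes Theorem \ref{main}; you merely spell out the bookkeeping the paper leaves implicit (taking $\V=(1,\dots,1)$, $\H=\N d$, checking $\A_\H=\A_d$, and invoking Proposition \ref{noetherian condition}(2) for Noetherianity). The identification of $\A_d$ as the generating set of $\widetilde{\H}$ and the reduction to Theorem \ref{main} are exactly as intended.
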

\begin{proof}
The assertion immediately follows from Theorem \ref{main}. 
\end{proof}
Eisenbud--Reeves--Totaro proved in \cite{ERT} that 
if $K$ is an infinite field, the coordinates ${y}_1, \dots, {y}_s$ of $S$ are generic, 
and $\prec$ is a certain reversed lexicographic order, 
then it holds that 
$\delta(\initial_{\prec_{{\w'}}}(\phi_{\A_d}^{-1}(I))\le \max\{2, \delta(\initial_{\w}(I))/{d}\}$. 
\subsection{Toric fiber products}
We recall toric fiber products defined in \cite{Sullivant}. 
Let $s_1, \dots, s_d$, $t_1, \dots, t_d$ and $d$ be positive integers, and  let 
\[
S_1=K[\y]=K\bigl[y^{(i)}_j\mid i\in [d], j\in [s_i]\bigr], ~~ S_2=K[\z]=K\bigl[z^{(i)}_k\mid i\in [d], k\in [t_i]\bigr], 
\] 
be $\Z^d$-graded polynomial rings regarded with 
\[
\deg(y^{(i)}_j)=\deg(z^{(i)}_k)=\e_i 
\]
for $i\in [d], j\in [s_i], k\in[t_i]$ where $\e_i$ is the vector with unity in the $i$-th position and zeros elsewhere. 
Then 
\[
S:=S_1\otimes_K S_2\cong K[y^{(i)}_j, z^{(i)}_k\mid i\in [d], j\in [s_i], k\in [t_i]\bigr] 
\]
carries a $\Z^d\times\Z^d$-graded ring structure by setting 
\[
\deg_S(y^{(i)}_j)=(\e_i, \mathbf{0}), ~~\deg_S(z^{(i)}_k)=(\mathbf{0}, \e_i)
\]
for $i\in [d], j\in [s_i], k\in[t_i]$ in $S$. 
Let 
\[
\Delta=\{(\vv, \vv)\mid \vv\in \Z^d\}\subset \Z^d\times\Z^d 
\]
be the diagonal subsemigroup of $\Z^d\times\Z^d$. 
Since $\Delta$ is generated by $\{(\e_i, \e_i)\mid i\in[d]\}$, we have 
\[
S^{(\Delta)}\cong K[y^{(i)}_jz^{(i)}_k\mid i\in [d], j\in [s_i], k\in [t_i]\bigr]. 
\]
Let $R=K\bigl[x^{(i)}_{jk}\mid i\in [d], j\in [s_i], k\in [t_i]\bigr]$ be a polynomial ring, 
and $\phi:R\to S$ the monomial homomorphism $\phi(x^{(i)}_{jk})=y^{(i)}_jz^{(i)}_k$. 

Let $I_1\subset S_1$ and $I_2\subset S_2$ be $\Z^d$-graded ideals, 
and denote $(I_1\otimes S_2)+(S_1\otimes I_2)\subset S$ simply by $I_1+I_2$. 
The ideal 
\[
I_1\times_{\Z^d}I_2:=\phi^{-1}(I_1+I_2)
\] 
is called the {\it toric fiber product} of $I_1$ and $I_2$. 
Originally, the assumptions in \cite{Sullivant} are that $\deg(y^{(i)}_j)=\deg(z^{(i)}_k)=\a^{(i)}\in \Z^d$ with 
$\a^{(1)}, \dots, \a^{(d)}$ linearly independent, and $I_1$ and $I_2$ are $\Z^d$-graded ideals, 
which are equivalent to ours. 

Let $\w_1$ and $\w_2$ weight vectors of $S_1$ and $S_2$ such that 
$\initial_{\w_1}(I_1)$ and $\initial_{\w_2}(I_2)$ are monomial ideals, 
and set $\w=(\w_1, \w_2)$, the weight oder of $S$. 
Let $G_1$ and $G_2$ be Gr\"obner bases of $I_1$ and $I_2$ with respect to $\w_1$ and $\w_2$ respectively. 
\begin{Theorem}\label{toric fiber}
Let the notation be as above. 
Let $\prec$ be the lexicographic term order on $R$ such that
$x^{(i_1)}_{j_1k_1}\prec x^{(i_2)}_{j_2k_2}$ 
if $i_1 < i_2$ or $i_1 = i_2$ and $j_1 < j_2$ or $i_1 = i_2$ and $j_1 = j_2$ and $k_1 > k_2$. 
Then the following hold: 
\begin{enumerate}
\item 
$\delta(\initial_{\prec_{\phi^{*}\w}}(I_1\times_{\Z^d}I_2))
\le \max\{2, \delta(\initial_{\w_1}(I_1)), \delta(\initial_{\w_1}(I_2))\}$. 
\item 
If both of $\initial_{\w_1}(I_1)$ and $\initial_{\w_2}(I_2)$ are generated by square-free monomials, 
then $\initial_{\prec_{\phi^*\w}}(I_1\times_{\Z^d}I_2)$ is generated by square-free monomials. 
\end{enumerate}
\end{Theorem}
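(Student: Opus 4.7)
The plan is to realize this statement as a direct instance of Theorem \ref{main}, once the toric fiber product setup is re-expressed in the language of Notation \ref{notation}. The key inputs needed are: (a) that the relevant ambient ring is Noetherian in the graded sense required, (b) that the toric ideal $P_{\A_\H}$ in the middle position has a quadratic square-free initial ideal under the given lex order, and (c) that the initial ideal of $I_1+I_2$ with respect to the product weight is monomial.

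First I would match the data. Take $S = S_1 \otimes_K S_2$ with its $\Z^{2d}$-grading by $\deg(y^{(i)}_j) = (\e_i, \mathbf{0})$ and $\deg(z^{(i)}_k) = (\mathbf{0}, \e_i)$; let $\V$ be the corresponding $2d \times (\sum s_i + \sum t_i)$ matrix of degrees, and $\H = \Delta \subset \Z^{2d}$. The distinct columns of $\V$ are the $2d$ vectors $(\e_i,\mathbf{0}), (\mathbf{0}, \e_j)$, which are $\Z$-linearly independent, so Proposition \ref{noetherian condition}(2) applies and $S^{(\Delta)}$ is Noetherian. The semigroup $\widetilde{\Delta} = \{\a \mid \V\cdot \a \in \Delta\}$ is generated by the exponent vectors corresponding to the monomials $y^{(i)}_j z^{(i)}_k$; thus $\A_\H$ is indexed by triples $(i,j,k)$ with $i\in[d], j\in[s_i], k\in[t_i]$, and we may identify $R^{[\H]} = R$, $\phi_{\A_\H} = \phi$, and $\phi_{\A_\H}^{-1}(I_1+I_2) = I_1 \times_{\Z^d} I_2$.

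Next I would identify the Gr\"obner basis of $P_{\A_\H}$. Because $y^{(i)}_j z^{(i')}_k$ has a diagonal bidegree only when $i=i'$, the algebra $S^{(\Delta)}$ decomposes (along the index $i$) as a tensor product of Segre algebras $K[y^{(i)}_j z^{(i)}_k]$, and correspondingly $P_{\A_\H}$ is generated by the Segre binomials
\[
x^{(i)}_{jk}\, x^{(i)}_{j'k'} - x^{(i)}_{jk'}\, x^{(i)}_{j'k}
\qquad (i\in[d],\ j<j',\ k<k').
\]
A direct check with the given lex order shows that among the four variables the largest is $x^{(i)}_{j'k}$, whence $\initial_\prec = x^{(i)}_{jk'} x^{(i)}_{j'k}$, a square-free quadratic monomial. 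These form a Gr\"obner basis (by Buchberger's criterion applied separately within each Segre block, since variables with distinct indices $i$ are disjoint), so $\initial_\prec(P_{\A_\H})$ is a square-free ideal of degree $2$.

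It remains to understand $\initial_\w(I_1+I_2)$. Since $I_1 \subset S_1$ and $I_2 \subset S_2$ involve disjoint variable sets and $\w = (\w_1, \w_2)$, a standard argument gives $\initial_\w(I_1+I_2) = \initial_{\w_1}(I_1) + \initial_{\w_2}(I_2)$, which is again a monomial ideal of degree $\max\{\delta(\initial_{\w_1}(I_1)), \delta(\initial_{\w_2}(I_2))\}$, and square-free if both summands are. Applying Theorem \ref{main} with this weight $\w$ and this term order $\prec$ gives both conclusions at once: part (1) follows from Theorem \ref{main}(1) combined with $\delta(\initial_\prec(P_{\A_\H})) = 2$, and part (2) from Theorem \ref{main}(2) together with the square-free property just observed. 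The only nontrivial step is the explicit Gr\"obner basis computation for $P_{\A_\H}$; everything else is assembly from prior results.
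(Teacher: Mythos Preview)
Your proposal is correct and follows essentially the same route as the paper: both verify that $\Ker\phi = P_{\A_\H}$ has a quadratic square-free initial ideal under the given lex order, observe that $\initial_\w(I_1+I_2)=\initial_{\w_1}(I_1)+\initial_{\w_2}(I_2)$, note that $I_1+I_2$ is $\Z^d\times\Z^d$-graded, and then invoke Theorem~\ref{main}. The only cosmetic difference is that the paper cites \cite{Sullivant} Proposition~2.6 for the Gr\"obner basis of $\Ker\phi$, whereas you sketch the Segre argument directly; you also make the Noetherianity check via Proposition~\ref{noetherian condition}(2) explicit, which the paper handles by a blanket remark at the start of the section.
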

\begin{proof}
By \cite{Sullivant} Proposition 2.6, the Gr\"obner basis of $\Ker \phi$ with respect to $\prec$ is 
\[
\{ \underline{x^{(i)}_{j_1 k_2}x^{(i)}_{j_2k_1}}-x^{(i)}_{j_1k_1}x^{(i)}_{j_2k_2}\mid 
i\in [d], ~1\le j_1<j_2 \le s_i, ~1\le k_1<k_2\le t_i\} 
\]
where underlined terms are initial. 
Since $G=G_1\cup G_2$ is a Gr\"obner basis of $I_1+I_2$ with respect to $\w$, we have 
\[
\delta(\initial_\w(I_1+I_2))=\max\{\delta(\initial_{\w_1}(I_1)), \delta(\initial_{\w_1}(I_2))\}. 
\]
As $I_1+I_2$ is a $\Z^d\times \Z^d$-graded ideal, 
the assertions follow from Theorem \ref{main}. 
\end{proof}
In case of toric fiber product, the pseudo-Gr\"obner basis constructed as in Proposition \ref{lifted pGB} 
from Gr\"obner basis of $I_1+I_2$ is a Gr\"obner basis of $I_1\times_{\Z^d}I_2$. 
This is mentioned in \cite{Sullivant} Corollary 2.10, but the proof contains a gap 
(the author claims that the pseudo-Gr\"obner basis is a Gr\"obner basis without proof). 
The proof of Theorem \ref{toric fiber} (1) by Sullivant (\cite{Sullivant} Corollary 2.11) uses this fact. 
We give a correct proof here. 
\begin{Theorem}
Let $G_1$ and $G_2$ be Gr\"obner bases of $I_1$ and $I_2$ with respect to weight vectors $\w_1$ and $\w_2$ respectively, 
and set $\w=(\w_1, \w_2)$. 
Then the pseudo-Gr\"obner basis of $I_1\times_{\Z^d}I_2$ constructed from $G:=G_1\cup G_2$ as in Proposition \ref{lifted pGB} 
is a Gr\"obner basis of $I_1\times_{\Z^d}I_2$ with respect to $\prec_{\phi^{*}\w}$. 
\end{Theorem}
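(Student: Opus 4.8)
The plan is to invoke Proposition \ref{lifted GB} with the configuration $\A_\H = \A_\Delta$, the monomial homomorphism $\phi = \phi_{\A_\Delta}$, the $\Z^d\times\Z^d$-graded ideal $I = I_1+I_2$, and the Gr\"obner basis $F = G = G_1\cup G_2$ of $I$ with respect to $\w=(\w_1,\w_2)$. Everything needed is in place: $\A_\Delta$ is a configuration (each generator $(\e_i,\e_i)$ pairs to $1$ under the linear form sending $(\vv,\vv')\mapsto$ the sum of a fixed coordinate of $\vv$ and the corresponding one of $\vv'$, or more simply since $\Delta$ is generated by degree-one elements and $S$ itself is standard graded), and $G$ is a Gr\"obner basis of $I_1+I_2$ with respect to $\w$ because $G_1$ lives in the variables $y^{(i)}_j$, $G_2$ in the variables $z^{(i)}_k$, and $S$ is the tensor product, so S-polynomials across the two groups reduce to zero. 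Thus the only hypothesis of Proposition \ref{lifted GB} that requires genuine verification is the degree-homogeneity condition: for each $f\in G$, every element of the minimal monomial generating set $M^{(\A_\Delta)}_\prec(\initial_\w(f))$ has the same degree $\delta_f$.

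The key step is therefore to analyze $L^{(\A_\Delta)}_\prec(v)$ and its minimal generators for a single monomial $v = \initial_\w(f)$, where $f\in G_1$ (the case $f\in G_2$ is symmetric). Write $v = \prod_{i,j}(y^{(i)}_j)^{e^{(i)}_j}$, a monomial in the $\y$-variables only, of $\Z^d\times\Z^d$-degree $(\vv_f,\mathbf 0)$ where $\vv_f = \sum_i\bigl(\sum_j e^{(i)}_j\bigr)\e_i$. A monomial $\x^\a\in R$ lies in $\phi^{-1}(\langle v\rangle)$ iff $\phi(\x^\a)=\prod_{i,j,k}(y^{(i)}_j z^{(i)}_k)^{a^{(i)}_{jk}}$ is divisible by $v$, i.e. iff for every $(i,j)$ the sum $\sum_k a^{(i)}_{jk}$ is at least $e^{(i)}_j$; and $\x^\a\notin\initial_\prec(P_{\A_\Delta})$ means $\x^\a$ is a standard monomial, which by the explicit quadratic Gr\"obner basis of $\Ker\phi$ (quoted from \cite{Sullivant} Prop. 2.6 in the proof of Theorem \ref{toric fiber}) forces $\x^\a$ to be a product of variables $x^{(i)}_{jk}$ that, for each fixed $i$, are "nested" — concretely, the exponent support is a chain in the natural partial order on pairs $(j,k)$. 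I would show that the minimal such standard monomials are exactly obtained by choosing, for each $(i,j)$, exactly $e^{(i)}_j$ of the variables $x^{(i)}_{j\bullet}$ subject to the chain condition, and hence each minimal generator of $L^{(\A_\Delta)}_\prec(v)$ has total degree exactly $\sum_{i,j}e^{(i)}_j = |v|$. This gives $\delta_f = \deg(v) = \deg_\w(f)$ uniformly over $M^{(\A_\Delta)}_\prec(\initial_\w(f))$, so the hypothesis of Proposition \ref{lifted GB} holds.

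With that verified, Proposition \ref{lifted GB} yields directly that $G_{\A_\Delta}\cup\Lift(F)$ is a Gr\"obner basis of $\phi^{-1}(I_1+I_2)=I_1\times_{\Z^d}I_2$ with respect to $\prec_{\phi^*\w}$, which is the assertion. The main obstacle I anticipate is the combinatorial bookkeeping in the previous paragraph: one must argue carefully that reducing a preimage monomial modulo the nested quadratic relations $x^{(i)}_{j_1k_2}x^{(i)}_{j_2k_1}\mapsto x^{(i)}_{j_1k_1}x^{(i)}_{j_2k_2}$ never changes the total degree and never escapes the ideal $\langle v\rangle$ (it does not, since these relations fix each $\sum_k a^{(i)}_{jk}$), and that the resulting standard monomials of minimal degree all have degree $|v|$ rather than something larger — i.e. that the phenomenon in Example \ref{pGB is not GB}, where an S-polynomial produced the higher-degree monomial $x_2^3$, cannot occur here precisely because the chain structure of the standard monomials is compatible with the divisibility-by-$v$ condition coordinatewise in $j$. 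Once this degree-uniformity is pinned down, the rest is a direct appeal to the machinery already developed.
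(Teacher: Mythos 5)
Your reduction is the same as the paper's: invoke Proposition \ref{lifted GB} with $\A_\H = \A_\Delta$ and $I = I_1+I_2$, and reduce to the degree-uniformity hypothesis that, for each $g\in G$, every element of $M^{(\A_\Delta)}_\prec(\initial_\w(g))$ has the same degree. Where you diverge is in verifying that hypothesis. The paper dispatches it in two lines: writing $g\in S_1$ with $\initial_\w(g)=\y^\a$, it notes that for any monomial $\x^\b$ in the preimage, the $\y$-degree of $\phi(\x^\b)$ equals $\deg(\x^\b)$ (each $x^{(i)}_{jk}$ contributes exactly one $\y$-factor), hence $\y^\a\mid\phi(\x^\b)$ forces $\deg(\x^\b)\ge\deg(\y^\a)$; and Lemma \ref{pull-back of monomial ideals}(1) gives the upper bound $\deg(\x^\b)\le\deg(\y^\a)$ on minimal generators. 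So all of $M_g$ sits in degree $\deg(g)$, and one is done. You instead set out to characterize the standard monomials in $\phi^{-1}(\langle v\rangle)$ explicitly via the "chain" structure of the quadratic Gr\"obner basis of $\Ker\phi$, and you flag this bookkeeping as the main obstacle. That route can in fact be closed — if $\sum_k a^{(i)}_{jk} > e^{(i)}_j$ for some $(i,j)$, divide out one $x^{(i)}_{jk}$: divisors of standard monomials are standard and the coordinatewise inequalities persist, so the monomial was not a minimal generator; hence minimal generators have $\sum_k a^{(i)}_{jk} = e^{(i)}_j$ exactly and degree $|v|$. But notice that the chain/nesting structure from \cite{Sullivant}~Prop.~2.6 plays no role at all here — only the fact that divisors of standard monomials are standard — and the worry about escaping $\langle v\rangle$ or raising degree (the Example~\ref{pGB is not GB} phenomenon) never arises once you phrase it as "can I delete a variable," rather than "what happens under Buchberger reduction." So your proposal is correct in substance, identifies the right reduction, and can be completed; but the combinatorial detour you anticipated dreading is unnecessary, since the lower bound is the one-line $\y$-degree observation and the upper bound is already supplied by Lemma~\ref{pull-back of monomial ideals}(1).
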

\begin{proof}
Let $g\in G$. 
We denote by $L_g$ a monomial ideal generated by all monomials in $\phi^{-1}(\initial_\w(g))\backslash {\Ker \phi}$. 
Let $M_g$ be the minimal system of monomial generators of $L_g$. 
By Proposition \ref{lifted GB}, it is enough to show that $\deg(u)=\deg(g)$ for all $u\in M_g$ to prove this theorem. 
Since $g\in S_1=K[\y]$ or $g\in S_2=K[\z]$, 
we may, and do assume that $g\in S_1$. 
Set $\initial_\w(g)=\y^\a$. 

Let $\x^\b\in M_g$. Then $\y^\a$ divide $\phi(\x^\a)$. 
Since the degree of $\phi(\x^\b)$ in $\y$ is the same as $\deg(\x^\b)$ by the definition of $\phi$, 
we have $\deg(\x^\a)\ge \deg(\y^\a)$. 
By Lemma \ref{pull-back of monomial ideals} (1), 
it holds that $\deg(\x^\a)\le \deg(\y^\a)$. 
Hence we conclude $\deg(\x^\b)\deg(\y^\a)= \deg(g)$. 
\end{proof}
\subsection{Generalized nested configurations}
Let $d$ and $\mu$ positive integers, 
and take $\lambda_i\in \N$ for $i\in[d]$. 
Let $\A$ be a configuration of $\N^d\subset\bigoplus_{i=1}^{d} \Z \e_i$, 
where $\e_i$ is the vector with unity in the $i$-th position and zeros elsewhere. 
Let \[
\B_{i} = \{\b^{(i)}_1, \dots, \b^{(i)}_{\lambda _i}\}\subset\N^\mu
\]
be a configuration of $\N^\mu$ for $i = 1, 2, \dots, d$. 
The {\it (generalized) nested configuration} arising from $\A$ and $\B_{1}, \dots, \B_{d}$ is the configuration 
\[
\A[\B_{1}, \dots, \B_{d}]:= 
\{\b^{(i_1)}_{j_1}+ \cdots +\b^{(i_r )}_{j_r}\mid 
1\le r \in \N, \e_{i_1}+\cdots + \e_{i_r}\in \A, ~ j_k\in [\lambda_{i_k}], ~ i_k\in [d]\}. 
\]
Originally, Aoki--Hibi--Ohsugi--Takemura (\cite{AHOT}) define nested configurations in case where 
there exists $0<\mu_1, \dots, \mu_d\in \N$ such that $\N^\mu=\N^{\mu_1}\times\dots \times \N^{\mu_d}$ and $\B_{i}\subset \N^{\mu_i}$. 
For $1\le i\le d$, let $\E_{i}=\{\e^{(i)}_1, \dots, \e^{(i)}_{\lambda_{i}}\}$ be the configuration of $\bigoplus_{j=1}^{\lambda_i} \Z \e^{(i)}_j$. 
Then $\E_{1}\cup \dots \cup \E_{d}$ is a configuration of $\bigoplus_{i=1}^{d}\bigoplus_{j=1}^{\lambda_i} \Z \e^{(i)}_j$. 
Let 
\[
S=K\bigl[\E_{1}\cup \dots\cup \E_{d}\bigr]\cong K\bigl[y^{(i)}_j\mid i\in[d], ~j\in[\lambda_i]\bigr]
\] 
be the $\N^{d}$-graded polynomial ring with $\deg_{\N^d} y^{(i)}_j=\e_i$. 
Then 
\[
S^{(\N\A)}=K\bigl[\A[\E_{1}, \dots, \E_{d}]\bigr]. 
\]
\begin{Example}
Let $S=K\bigl[y_1,y_2,y_3,y_4,y_5\bigr]$ be $\N^3$-graded ring with 
\[
\deg_{\N^3} y_1=\deg_{\N^3} y_2=\e_1, ~~\deg_{\N^3} y_3=\deg_{\N^3} y_4=\e_2, ~~\deg_{\N^3} y_5=\e_3. 
\]
Let $\A=\left(
	\begin{array}{ccc}
	1 & 1 & 0 \\
	1 & 0 & 2 \\
	1 & 2 & 1 \\
	\end{array}
\right)$. 
Then 
\[
S^{(\N\A)}=K[
y_1y_3y_5, y_1y_4y_5, y_2y_3y_5, y_2y_4y_5, \hspace{1mm}
y_1y_5^2, y_2y_5^2, \hspace{1mm} 
y_3^2y_5, y_3y_4y_5, y_4^2y_5
]. 
\]
\end{Example}
\begin{Theorem}[\cite{HO} Theorem 2.5]
Let the notation as above. Then the following holds: 
\begin{enumerate}
\item If $P_\A$ admit initial ideals of degree at most $m$, then so is $P_{\A[\E_{1}, \dots, \E_{d}]}$. 
\item If $P_\A$ admit square-free initial ideals, then so is $P_{\A[\E_{1}, \dots, \E_{d}]}$. 
\end{enumerate}
\end{Theorem}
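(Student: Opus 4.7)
The plan is to derive the theorem as an immediate specialization of Theorem~\ref{thm2} by choosing $\B_i := \E_i$. To set up the specialization, I put $\nu := d$, $\mu := \sum_{i=1}^d \lambda_i$, let $\V$ be the $d \times \mu$ integer matrix whose column indexed by the pair $(i,j)$ with $i \in [d]$, $j \in [\lambda_i]$ is $\e_i \in \Z^d$, and take $\w_i := \e_i$, which are $\Z$-linearly independent. Under the canonical identification of $\bigoplus_{i=1}^d \bigoplus_{j=1}^{\lambda_i} \Z \e^{(i)}_j$ with $\Z^\mu$, I set $\b^{(i)}_j := \e^{(i)}_j$, so that $\V \cdot \b^{(i)}_j = \e_i = \w_i$. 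Then $\B_i := \E_i$ satisfies the hypothesis $\B_i \subset \Fiber_\V(\w_i)$ of Theorem~\ref{thm2}, and the combinatorial definition of $\A[\B_1,\dots,\B_d]$ there reduces to the HO nested configuration $\A[\E_1,\dots,\E_d]$ defined here, since $\sum_{i,j} a^{(i)}_j \b^{(i)}_j = \sum_{i,j} a^{(i)}_j \e^{(i)}_j$.

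The key observation is that the union $\B := \E_1 \cup \cdots \cup \E_d$ consists of all $\mu$ distinct standard basis vectors of $\Z^\mu$, and hence is $\Z$-linearly independent. Consequently the associated monomial homomorphism is injective on monomials and $P_\B = 0$. The zero ideal is vacuously generated by an empty set of monomials, so trivially it admits both a square-free initial ideal and an initial ideal of degree at most $m$ for every $m$; both hypotheses on $P_\B$ in Theorem~\ref{thm2} are thus automatically satisfied.

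Both conclusions of the theorem then follow by invoking Theorem~\ref{thm2}, with the hypothesis on $P_\A$ being our standing assumption. The only remaining verification is the finite-generation hypothesis implicit in Notation~\ref{notation} (used inside Theorem~\ref{thm2}), which is guaranteed by Proposition~\ref{noetherian condition}~(2), since the distinct columns of $\V$ are precisely the linearly independent vectors $\e_1,\dots,\e_d$. There is no substantive obstacle in this argument; the only real work is the bookkeeping to confirm that Theorem~\ref{thm2} specializes correctly to the HO setup and that taking $\B = \E_1 \cup \cdots \cup \E_d$ does indeed force $P_\B = 0$, which trivializes the outer hypothesis.
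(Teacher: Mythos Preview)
The paper does not give its own proof of this statement; it is quoted from \cite{HO} as an external input. Immediately afterward the paper uses it (together with \cite{AHOT} Theorem~3.6 in the quadratic case, and Theorem~\ref{main}) to establish Theorem~\ref{nested1} and then Theorem~\ref{generalized nested}, which is the same as Theorem~\ref{thm2} of the introduction.

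Your argument is formally valid if one treats Theorem~\ref{thm2} as a black box, but within this paper it is circular. The proof of Theorem~\ref{generalized nested} factors $P_{\A[\B_1,\dots,\B_d]}=\phi_{\A[\E_1,\dots,\E_d]}^{-1}(P_\B)$ and then invokes Theorem~\ref{main}, whose bound reads
\[
\delta\bigl(\initial_{\prec_{\w'}}(\phi_{\A_\H}^{-1}(I))\bigr)\le \max\{\delta(\initial_\w(I)),\ \delta(\initial_\prec(P_{\A_\H}))\}
\]
with $\A_\H=\tilde\A=\A[\E_1,\dots,\E_d]$. The required control over $\initial_\prec(P_{\A[\E_1,\dots,\E_d]})$ is supplied precisely by the Hibi--Ohsugi theorem you are trying to prove. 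Your specialization $\B_i:=\E_i$ does force $P_\B=0$, which trivializes the hypothesis on $I=P_\B$, but it does nothing to discharge the hypothesis on $P_{\A_\H}=P_{\A[\E_1,\dots,\E_d]}$ hidden inside the proof of Theorem~\ref{thm2}. So you have recovered the statement only by assuming it.
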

\begin{Theorem}\label{nested1}
Let $S=K[\y]=K\bigl[y^{(i)}_j\mid i\in[d], ~j\in[\lambda_i]\bigr]$ be $\N^{d}$-graded polynomial ring with $\deg_{\N^d} y^{(i)}_j=\e_i$ as above. 
Let $R=K\bigl[{x}_\a\mid \a\in \A[\E_{1}, \dots, \E_{d}]\bigr]$ be a polynomial ring, and set $\phi_{\A[\E_{1}, \dots, \E_{d}]}: R\to S$, $x_\a\mapsto\y^\a$. 
Let $I\subset S$ be an $\N^{d}$-graded ideal. 
If $I$ and $P_\A$ admit quadratic Gr\"obner bases with respect to some term orders, 
then so does $\phi_{\A[\E_{1}, \dots, \E_{d}]}^{-1}(I)$. 
\end{Theorem}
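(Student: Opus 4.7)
The plan is to apply Theorem~\ref{main}(1) in the setup $\H=\N\A$, under which the set $\A_\H$ of Notation~\ref{notation} equals (a system of semigroup generators of the fiber of) the nested configuration $\A[\E_1,\dots,\E_d]$, and then to lift the resulting quadratic initial ideal to a quadratic Gr\"obner basis using the configuration hypothesis on $\A$.

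For the hypotheses of Theorem~\ref{main}(1) we need two ingredients. First, since $P_\A$ admits a quadratic Gr\"obner basis, the cited Hibi--Ohsugi theorem (\cite{HO} Theorem 2.5) furnishes a term order $\prec$ on $R$ with $\delta(\initial_\prec(P_{\A[\E_1,\dots,\E_d]}))\le 2$. Second, since $I$ admits a quadratic Gr\"obner basis with respect to some term order on $S$, the proposition of Section~2 that any term-order initial ideal coincides with a weight initial ideal gives a vector $\w\in\N^s$ such that $\initial_\w(I)$ is a monomial ideal with $\delta(\initial_\w(I))\le 2$. Setting $\w':=\phi^*_{\A[\E_1,\dots,\E_d]}\w$, Theorem~\ref{main}(1) then yields
\[
\delta\bigl(\initial_{\prec_{\w'}}(\phi_{\A[\E_1,\dots,\E_d]}^{-1}(I))\bigr)\;\le\;\max\{\delta(\initial_\w(I)),\,\delta(\initial_\prec(P_{\A[\E_1,\dots,\E_d]}))\}\;\le\;2,
\]
so $\phi_{\A[\E_1,\dots,\E_d]}^{-1}(I)$ has a quadratic initial ideal with respect to $\prec_{\w'}$.

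To upgrade ``quadratic initial ideal'' to ``quadratic Gr\"obner basis'' we exploit that $\A$ is a configuration. Pick $\lambda\in\Q^d$ with $\lambda\cdot\alpha=1$ for every $\alpha\in\A$, and (after clearing denominators) grade $S$ by $\deg y^{(i)}_j:=\lambda_i$ and $R$ by $\deg x_\a:=1$. The $S$-grading is a coarsening of the ambient $\Z^d$-grading, so $I$ remains homogeneous. For $\a\in\A[\E_1,\dots,\E_d]$ corresponding to $\alpha\in\A$ one computes $\deg\y^\a=\sum_{i,j}\lambda_i a^{(i)}_j=\lambda\cdot\alpha=1$, so $\phi_{\A[\E_1,\dots,\E_d]}$ is degree-preserving and the contraction $\phi_{\A[\E_1,\dots,\E_d]}^{-1}(I)$ is homogeneous in the standard grading of $R$. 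The reduced Gr\"obner basis of a standard-graded ideal with respect to any monomial order consists of homogeneous polynomials of the same degree as their leading terms, so each element of the reduced Gr\"obner basis of $\phi_{\A[\E_1,\dots,\E_d]}^{-1}(I)$ with respect to $\prec_{\w'}$ has degree at most~$2$, as required. The main obstacle is this final step: Theorem~\ref{main} controls only the initial ideal, and a quadratic initial ideal does not in general force a quadratic Gr\"obner basis; it is precisely the configuration hypothesis on $\A$ that rescues the implication by producing a standard grading on $R$ under which the contraction ideal is homogeneous.
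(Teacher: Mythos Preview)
Your proof is correct and follows essentially the same route as the paper: invoke the Aoki--Hibi--Ohsugi--Takemura/Hibi--Ohsugi result to obtain a quadratic initial ideal for $P_{\A[\E_1,\dots,\E_d]}$, then apply Theorem~\ref{main}(1). The paper's proof stops there, leaving implicit the passage from ``quadratic initial ideal'' to ``quadratic Gr\"obner basis''; your final paragraph, which uses the configuration vector $\lambda$ for $\A$ to show that $\phi_{\A[\E_1,\dots,\E_d]}^{-1}(I)$ is homogeneous in the standard grading of $R$ (so that the reduced Gr\"obner basis has the same degrees as the minimal generators of the initial ideal), makes this step explicit and is a welcome clarification.
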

\begin{proof}
If $P_\A$ admits a quadratic initial ideal, then so does $P_{\A[\E_{1}, \dots, \E_{d}]}$ (\cite{AHOT} Theorem 3.6). 
Therefore the assertion follows from Theorem \ref{main}. 
\end{proof}
\begin{Theorem}\label{generalized nested}
Let $d>0$ and $\lambda_i\in \N$ for $i\in [d]$ be integers, $\Z^d=\bigoplus_{i=1}^d \Z\e_i$ a free $\Z$-module of rank $d$ 
with a basis $\e_1, \dots, \e_d$, and 
$S=K\bigl[y^{(i)}_j\mid i\in [d], ~j\in[\lambda_i]\bigr]$ a $\Z^d$-graded polynomial ring with $\deg y^{(i)}_j=\e_i$ for $i \in [d]$, $j\in[\lambda_i]$. 
Let $\A\subset\N^d= \bigoplus_{i=1}^d\N \e_i$ be a configuration. 
Let $\V$ be a $\nu\times \mu$ integer matrix, and $\w_1,\dots,\w_d\in \Z^\nu$ linearly independent vectors. 
For $i\in[d]$, we fix a finite set 
\[
\B_i = \bigl\{\b^{(i)}_j\mid j\in[\lambda _i]\}\subset \Fiber_\V(\w_i)=\{\b \in \Z^\mu\mid \V\cdot\b=\w_i\bigr\}\subset\Z^\mu. 
\]
We set $\B=\B_1\cup\dots\cup\B_d$. 
Then $\B$ is a configuration of $\Z^\mu$, and the following hold. 
\begin{enumerate}
\item If both of $P_\B$ and $P_\A$ admit initial ideals of degree at most $m$, 
then so is $P_{\A[\B_1,\dots,\B_d]}$. 
\item If both of $P_\B$ and $P_\A$ admit square-free initial ideals, 
then so is $P_{\A[\B_1,\dots,\B_d]}$. 
\end{enumerate}
\end{Theorem}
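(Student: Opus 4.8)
The statement should reduce to an application of Theorem \ref{generalized nested}'s nested-configuration predecessor, Theorem \ref{nested1}, combined with Theorem \ref{main} via a suitable factorization of the monomial homomorphism. First I would set up the notation matching Notation \ref{notation}: take $\tilde\A = \{\a = (a^{(i)}_j) \mid \deg_{\Z^d}\y^\a \in \A\}$, which (since $\A$ is a configuration) is the semigroup $\widetilde{\H}$ associated with the subsemigroup $\H = \N\A \subset \Z^d$ and the grading matrix $\V$ recording $\deg y^{(i)}_j = \e_i$; since the $\e_i$ are linearly independent, Proposition \ref{noetherian condition}(2) guarantees $\widetilde{\H}$ is finitely generated, so $R = K[x_\a \mid \a \in \tilde\A]$ and $\phi_{\tilde\A}\colon R \to S$, $x_\a \mapsto \y^\a$, are well-defined, and $\tilde\A$ is exactly a nested configuration in the sense of \cite{AHOT}.

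**Key steps.** The plan is: (1) Observe that $\B = \B_1 \cup \dots \cup \B_d$ is a configuration of $\Z^\mu$ — this is where linear independence of $\w_1,\dots,\w_d$ enters: pick $\sigma \in \Q^\nu$ with $\sigma \cdot \w_i = 1$ for all $i$ (possible since the $\w_i$ are independent), then $\lambda = {}^t\sigma \cdot \V \in \Q^\mu$ satisfies $\lambda \cdot \b^{(i)}_j = \sigma \cdot \V \cdot \b^{(i)}_j = \sigma \cdot \w_i = 1$ for every $i,j$. (2) Identify $P_{\A[\B_1,\dots,\B_d]}$ as a contraction ideal: the monomial homomorphism $\psi\colon K[y^{(i)}_j] \to K[z_1^{\pm},\dots,z_\mu^{\pm}]$, $y^{(i)}_j \mapsto \z^{\b^{(i)}_j}$, has $\Ker\psi = P_\B$, and since $\psi$ is $\Z^d$-graded (sending the $\e_i$-component to $\w_i$-degree) with $\psi \circ \phi_{\tilde\A} = \phi_{\B \cdot \tilde\A}$ exactly the parametrization of $\A[\B_1,\dots,\B_d]$, we get $P_{\A[\B_1,\dots,\B_d]} = \phi_{\tilde\A}^{-1}(P_\B)$. (3) Apply Theorem \ref{main} with $I = P_\B$: since $P_\B$ is $\Z^d$-graded (as each generator $y^{(i)}_j - \text{(stuff)}$ respects the $\e_i$-grading — more precisely $P_\B$ is generated by binomials homogeneous for the coarser $\Z^d$-grading because the $\w_i$ are independent, so any binomial relation among the $\z^{\b^{(i)}_j}$ must balance each $\e_i$ separately), one can pick a weight vector $\w$ on $S$ with $\initial_\w(P_\B)$ a monomial ideal and $\delta(\initial_\w(P_\B)) \le m$ (resp. square-free); then Theorem \ref{main}(1),(2) gives $\delta(\initial_{\prec_{\w'}}(\phi_{\tilde\A}^{-1}(P_\B))) \le \max\{m, \delta(\initial_\prec(P_{\tilde\A}))\}$ (resp. square-free). (4) Finally, control $P_{\tilde\A}$: since $\tilde\A$ is a nested configuration arising from $\A$ and the coordinate configurations $\E_i$, Theorem \ref{nested1} / \cite{AHOT} Theorem 3.6 (more generally the degree-$m$ statement \cite{HO} Theorem 2.5) shows $P_{\tilde\A} = P_{\A[\E_1,\dots,\E_d]}$ admits an initial ideal of degree $\le m$ (resp. square-free) whenever $P_\A$ does. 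Combining (3) and (4) yields the claim.

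**Main obstacle.** The delicate point is step (2)–(3): verifying that $P_\B$ is genuinely $\Z^d$-graded for the grading $\deg y^{(i)}_j = \e_i$, and that the factorization $\psi \circ \phi_{\tilde\A} = \phi_{\B\cdot\tilde\A}$ together with the $\Z^d$-grading places us precisely in the hypotheses of Notation \ref{notation} and Theorem \ref{main} (i.e. that $\H = \N\A$, that $\widetilde{\H}$ is the semigroup generated by $\tilde\A$, and that $\A_\H$ here coincides with $\tilde\A$). The $\Z^d$-homogeneity of $P_\B$ rests entirely on the linear independence of $\w_1,\dots,\w_d$: it forces every toric relation among the $\z^{\b^{(i)}_j}$ to decompose into relations within each block $\B_i$ plus the diagonal identifications, so that $\initial_\w(P_\B)$ can be taken $\Z^d$-graded as well — this is the analogue of Sullivant's observation for toric fiber products. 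Once that homogeneity is in hand, the rest is bookkeeping: the two conclusions follow by feeding the degree bound (resp. square-free property) for $P_\A$ through Theorem \ref{nested1} to bound $\delta(\initial_\prec(P_{\tilde\A}))$, and then through Theorem \ref{main} to bound $\delta(\initial_{\prec_{\w'}}(P_{\A[\B_1,\dots,\B_d]}))$.
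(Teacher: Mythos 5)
Your proposal is correct and follows essentially the same route as the paper's proof: factor $\phi_{\A[\B_1,\dots,\B_d]} = \phi_\B \circ \phi_{\A[\E_1,\dots,\E_d]}$, observe that $P_\B$ is $\Z^d$-graded because $\w_1,\dots,\w_d$ are linearly independent, and then apply Theorem \ref{main} (through Theorem \ref{nested1} / \cite{HO} Theorem 2.5 / \cite{AHOT} Theorem 3.6 to control $\initial_\prec(P_{\A[\E_1,\dots,\E_d]})$). You also explicitly verify that $\B$ is a configuration and that $\tilde\A$ is finitely generated via Proposition \ref{noetherian condition}(2), details the paper's terser proof leaves implicit.
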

\begin{proof}
Let $R=K\bigl[x_\a\mid \a\in \A[\E_{1}, \dots, \E_{d}] \bigr]$, $S=K\bigl[y^{(i)}_j\mid i\in[d], ~j\in[\lambda_i]\bigr]$ be polynomial rings, 
and set $\phi_{\A[\E_{1}, \dots, \E_{d}]}:R\to S$, $x_\a\mapsto \y^\a$, and $\phi_\B: S\to K[\z^{\pm 1}]=K[z_1^{\pm 1},\dots, z_\mu^{\pm 1}]$, 
$y^{(i)}_j\mapsto \z^{\b^{(i)}_j}$. 
Then $\phi_\B \circ \phi_{\A[\E_{1}, \dots, \E_{d}]}=\phi_{\A[\B_{1}, \dots, \B_{d}]}$. 
Since  $\w_1,\dots,\w_d\in \Z^\mu$ are linearly independent, $P_\B=\Ker\phi_\B$ is a $\Z^d$-graded ideal. 
Applying Theorem \ref{nested1} to the  $\Z^d$-graded ideal $P_{\B}$, we conclude the assertion. 
\end{proof}
\mbox{}\\
\noindent
{\bf Acknowledgement.} This research was supported by JST, CREST.

\end{document}